\title [Weight monoids of smooth affine spherical varieties]{Combinatorial characterization of the weight monoids of smooth affine spherical varieties}
\author{Guido Pezzini}
\address{Department Mathematik, Emmy--Noether--Zentrum, FAU Erlangen-N\"urnberg, Cauerstr.\ 11, D-91058 Erlangen}
\email{pezzini@math.fau.de}
\author{Bart Van Steirteghem}
\address{Department of Mathematics, Medgar Evers College - City University of New York, 1650 Bedford Ave., Brooklyn, NY 11225, USA}
\email{bartvs@mec.cuny.edu}
\newtheorem{theorem}{Theorem}[section]
\newtheorem{lemma}[theorem]{Lemma}
\newtheorem{proposition}[theorem]{Proposition}
\newtheorem{corollary}[theorem]{Corollary}
\theoremstyle{definition}
\newtheorem{definition}[theorem]{Definition}
\newtheorem{remark}[theorem]{Remark}
\newtheorem{example}[theorem]{Example}
\newtheorem{List}[theorem]{List}
\numberwithin{equation}{section}
\newcommand{\supp}{\mathrm{supp}}
\newcommand{\C}{\mathbb C}
\newcommand{\CC}{\mathbb C}
\newcommand{\Z}{\mathbb Z}
\newcommand{\ZZ}{\mathbb Z}
\newcommand{\N}{\mathbb N}
\newcommand{\Q}{\mathbb Q}
\newcommand{\QQ}{\mathbb Q}
\newcommand{\GG}{\mathbb G}
\newcommand{\GGm}{\mathbb G_\text{m}}
\newcommand{\A}{\mathbf A}
\newcommand{\PP}{\mathbb P}
\newcommand{\s}{\mathscr S}
\newcommand{\GL}{\mathrm{GL}}
\newcommand{\SL}{\mathrm{SL}}
\newcommand{\Spin}{\mathrm{Spin}}
\newcommand{\SO}{\mathrm{SO}}
\newcommand{\Sp}{\mathrm{Sp}}
\newcommand{\Chi}{\mathcal X}
\newcommand{\sA}{\mathsf{A}}
\newcommand{\sB}{\mathsf{B}}
\newcommand{\sC}{\mathsf{C}}
\newcommand{\sD}{\mathsf{D}}
\newcommand{\sE}{\mathsf{E}}
\newcommand{\sF}{\mathsf{F}}
\newcommand{\sG}{\mathsf{G}}
\DeclareMathOperator{\Aut}{Aut}
\DeclareMathOperator{\Div}{div}
\DeclareMathOperator{\Hom}{Hom}
\DeclareMathOperator{\Spec}{Spec}
\DeclareMathOperator{\soc}{soc}
\newcommand{\<}{\langle}
\renewcommand{\>}{\rangle}
\newcommand{\quot}{/\!\!/}
\newcommand{\wm}{\Gamma}
\newcommand{\ms}{\mathrm{M}_{\wm}}
\newcommand{\dw}{\Lambda^+}
\newcommand{\wl}{\Lambda}
\newcommand{\rl}{\Lambda_R}
\newcommand{\sr}{S}
\newcommand{\lat}{\Lambda}  
\newcommand{\col}{\Delta}
\newcommand{\V}{\mathcal{V}}
\newcommand{\D}{\mathcal{D}}
\newcommand{\osig}{\overline{\sigma}}
\newcommand{\inn}{\subset}
\newcommand{\loccit}{{\em loc.cit.}}
\begin{document}

\begin{abstract}
Let $G$ be a connected complex reductive group. A well known theorem of I.~Losev's says that a smooth affine spherical $G$-variety $X$ is uniquely determined by its weight monoid, which is the set of irreducible representations of $G$ that occur in the coordinate ring of $X$. In this paper, we use the combinatorial theory of spherical varieties and a smoothness criterion of R.~Camus to characterize the weight monoids of smooth affine spherical varieties. 
\end{abstract}

\maketitle

\section{Introduction and main results} \label{sec:intro}
A natural invariant of a complex affine algebraic variety $X$ equipped with an action of a connected reductive group $G$ is its \textbf{weight monoid} $\wm(X)$. By definition, it is the set of (isomorphism classes of) irreducible representations of $G$ that occur in the coordinate ring $\C[X]$ of $X$. In the 1990s, F. Knop conjectured that if $X$ is a smooth affine spherical variety ---i.e.\ if $X$ is smooth and $\C[X]$ is multiplicity free as a representation of $G$--- then $\wm(X)$ determines $X$ up to equivariant automorphism. This conjecture was proved by I.~Losev in \cite{losev-knopconj}. By work of Knop's~\cite{knop-autoHam} it implies that multiplicity free (real) Hamiltonian manifolds (cf.~\cite{guill&stern-mf}) are classified by their moment polytope and generic isotropy group. 

In this paper,  we use the combinatorial theory of spherical varieties and a smoothness criterion of R.~Camus~\cite{camus} to characterize the weight monoids of smooth affine spherical varieties. Our most general statement is Theorem~\ref{thm:general}. In this introduction we give a special case which is more elementary: in Theorem~\ref{thm:main} we characterize the $G$-saturated weight monoids of smooth affine spherical varieties (see Definition~\ref{def:G-saturated}). 

As an application, we characterize in Theorem~\ref{thm:model} when a semisimple and simply connected group $G$ has a smooth affine model variety, i.e.\ a smooth affine $G$-variety in whose coordinate ring all irreducible representations of $G$ occur with multiplicity one.

We point out that, for any given candidate weight monoid $\wm$, our criterion only requires finitely many elementary verifications. In fact, Theorem~\ref{thm:general} can be implemented as an algorithm that given a set of generators of $\wm$ decides whether $\wm$ is the weight monoid of a smooth affine spherical variety. As part of his forthcoming PhD thesis, Won Geun Kim has already implemented the case where $G = \SL(n)$ and $\wm$ is $G$-saturated (as in Theorem~\ref{thm:main}) and free.

Furthermore, thanks to \cite[Theorem 11.2]{knop-autoHam}, our main result gives a local combinatorial characterization of the moment polytopes of (real) multiplicity free Hamiltonian manifolds: by repeating the verifications of our criterion at every vertex of a candidate moment polytope $\mathcal{P}$, one can decide whether $\mathcal{P}$ is the momentum image a multiplicity free Hamiltonian manifold.

We now describe our main result in the special case of $G$-saturated monoids. Fix a
Borel subgroup $B$ of $G$ and a maximal torus $T$ contained in $B$, denote by $S$ the corresponding set of simple roots. Let $U$
be the unipotent radical of $B$. When $\alpha$ is a root of $(G,T)$, we will use $\alpha^{\vee}$ for the corresponding coroot. The
weight lattice of $G$ is denoted $\wl$. Recall that $\wl$ is the
character group of $T$, which we identify with the character
group of $B$. The set of dominant weights of $G$ with respect to $B$
will be denoted $\dw$. Then $\dw$ is a finitely generated
submonoid of $\wl$. We denote by $V(\lambda)$ the irreducible $G$-module
of highest weight $\lambda \in \dw$. Given an affine $G$-variety $X$ we identify its weight monoid $\wm(X)$
with a submonoid of $\dw$:
\[
\wm(X)= \{\lambda \in \dw \colon \Hom_G(V(\lambda), \C[X]) \neq 0\}.
\]
For a subset $\mathcal{E}$ of $\wl$, we will write $\Z\mathcal{E}$ for the sublattice of $\wl$ spanned by $\mathcal{E}$. We will use $\N \mathcal{E}$ for the submonoid (including $0$) of $\wl$ generated by $\mathcal{E}$. If $\Chi$ is a lattice, then we will write $\Chi^{*}$ for the dual lattice $\Hom_{\Z}(\Chi, \Z)$.

\begin{definition}
We call a submonoid $\wm$ of $\dw$ \textbf{smooth} if and only if there exists a smooth affine $G$-variety $X$ such that 
\begin{equation}
\C[X] \cong \oplus_{\lambda \in \wm} V(\lambda) \label{eq:smwm}
\end{equation}
as $G$-modules. 
\end{definition}

\begin{definition} \label{def:G-saturated}
Let  $\wm$ be a submonoid of the monoid $\dw$ of dominant weights of $G$.
We say that $\wm$ is \textbf{$G$-saturated} if the following
equality holds in $\wl$: 
\begin{equation}
\Z\wm \cap \dw = \wm.
\end{equation}
\end{definition}

\emph{For the remainder of this section, $\wm$ is a $G$-saturated submonoid of $\dw$.} Readers eager to check whether their favorite
$G$-saturated weight monoid $\wm$ is smooth can directly jump to Theorem~\ref{thm:main} and work their way backwards from there following the provided cross-references. 

As will be shown in Corollary~\ref{cor:ms_sat_irr}, if $\Gamma$ is $G$-saturated there is a unique affine $G$-variety for which the equality~(\ref{eq:smwm}) holds and which \emph{can be} smooth. We will denote it $X_{\wm}$. It is the ``most generic'' affine spherical $G$-variety with weight monoid $\wm$.  We recall that an irreducible (not necessarily affine or smooth) $G$-variety is \textbf{spherical} if it is normal and has an open $B$-orbit.

From $\wm$ we derive the following data:
\begin{enumerate}
\item the set of N-spherical roots of $X_{\wm}$, denoted by $\Sigma^{N}(\wm)$ and defined in Definition~\ref{def:Nadapted} (see also Proposition~\ref{prop:Nspherroots_Xwm}),
\item the valuation cone of $X_{\wm}$ , i.e.\ the set \begin{equation} \label{eq:valcone}
\V(\wm) = \{\nu \in \Hom_{\Z}(\Z\wm,\Q) \colon \<\nu,\osig\> \leq 0
\text{ for all } \osig \in \Sigma^N(\wm)\},
\end{equation}
\item a set of simple roots $S_\wm$.
\end{enumerate}

Proposition~\ref{prop:adapnsphroots} below tells us how to compute the set $\Sigma^{N}(\wm)$ from $\Gamma$, and $S_\wm$ is defined in Proposition~\ref{prop:localizroots}. We first introduce the relevant notions. 

\begin{definition} \label{def:scspherrootsG}
Let $\sigma$ be an element of the root lattice $\rl$ of $G$ and let $\sigma = \sum_{\alpha \in \sr} n_\alpha \alpha$ be its unique expression as a linear combination of the simple roots. The \textbf{support} of $\sigma$ is
\(\mathrm{supp}(\sigma)=\{\alpha \in \sr \colon n_\alpha \neq 0\}.\)
The \textbf{type} of $\mathrm{supp}(\sigma)$ is the Dynkin type of the subsystem generated by $\mathrm{supp}(\sigma)$ in the root system of $G$. The \textbf{set $\Sigma^{sc}(G)$ of spherically closed spherical roots of $G$} is the subset of  $\N \sr$ defined as follows: an element $\sigma$ of $\N\sr$ belongs to $\Sigma^{sc}(G)$ if 
after numbering the simple roots in $\mathrm{supp}(\sigma)$ like Bourbaki (see \cite{bourbaki-geadl47}) $\sigma$ is listed in Table~\ref{table:scspher}.      
\end{definition}

\begin{table}\caption{spherically closed spherical roots} \label{table:scspher}
\begin{center}
\begin{tabular}{ll}
Type of support & $\sigma$ \\
\hline
$\sf A_1$ & $\alpha$\\
$\sf A_1$ & $2\alpha$\\
$\mathsf A_1 \times \mathsf A_1$ & $\alpha+\alpha'$\\
$\mathsf A_n$, $n\geq 2$ & $\alpha_1+\ldots+\alpha_n$\\
$\mathsf A_3$ & $\alpha_1+2\alpha_2+\alpha_3$\\
$\mathsf B_n$, $n\geq 2$ & $\alpha_1+\ldots+\alpha_n$\\
                     & $2(\alpha_1+\ldots+\alpha_n)$\\
$\mathsf B_3$ & $\alpha_1+2\alpha_2+3\alpha_3$\\
$\mathsf C_n$, $n\geq 3$ & $\alpha_1+2(\alpha_2+\ldots+\alpha_{n-1})+\alpha_n$\\
$\mathsf D_n$, $n\geq 4$ & $2(\alpha_1+\ldots+\alpha_{n-2})+\alpha_{n-1}+\alpha_n$\\
$\mathsf F_4$ & $\alpha_1+2\alpha_2+3\alpha_3+2\alpha_4$\\
$\mathsf G_2$ & $4\alpha_1+2\alpha_2$\\          
& $\alpha_1+\alpha_2$
\end{tabular}
\end{center}
\end{table}
 
\begin{remark}
Note that $\Sigma^{sc}(G)$ is a finite set for every connected reductive group $G$. The notation $\Sigma^{sc}(G)$ is justified by Proposition~\ref{prop:Sigma_sc_G} below. 
\end{remark}

\begin{definition} \label{def:Sp}
Let $\wm$ be a set of dominant weights of $G$, that is $\wm \subset
\dw$. Then we define
\[S^p(\wm):= \{\alpha \in S \colon \<\lambda, \alpha^{\vee}\> = 0
\text{ for all $\lambda \in \wm$} \}. \]
\end{definition}
 
\begin{proposition} \label{prop:adapnsphroots}
Suppose $\wm$ is a $G$-saturated submonoid of $\dw$. If $\osig \in \Sigma^{sc}(G)$, then $\osig$ is an element of $\Sigma^N(\wm)$ if and only if the following conditions are all satisfied:
\begin{enumerate}[(i)]
\item $\osig$ is not a simple root; \label{item:osignotsimple}
\item $\osig \in \Z\wm$; \label{item:osiginlat}
\item $\osig$ is compatible with $S^p(\wm)$, that is: \label{compat}
\begin{itemize}
\item[-] if $\osig=\alpha_1+\ldots+\alpha_n$ with support of type $\mathsf{B}_n$ then
$\{\alpha_2, \alpha_3, \ldots, \alpha_{n-1}\} \inn S^p(\wm)$ and $\alpha_n \notin S^p(\wm)$;
\item[-] if $\osig=\alpha_1+2(\alpha_2+\ldots+\alpha_{n-1})+\alpha_n$ with support of type $\mathsf{C}_n$ then
$\{\alpha_3, \alpha_4, \ldots, \alpha_n\}
\inn S^p(\wm)$; 
\item[-] if $\osig$ is any other element of $\Sigma^{sc}(G)$ then 
\(\{\alpha \in \supp(\osig)\colon \<\alpha^{\vee}, \osig\> =0\}
\inn S^p(\wm)\);
\end{itemize}
\item if $\osig = 2 \alpha$  
then $\<\alpha^{\vee}, \gamma\> \in 2\Z$ for all $\gamma \in \Z\wm$; \label{item:corooteven}
\item if $\osig = \alpha+\beta$ with $\alpha, \beta \in S$ and $\alpha \perp \beta$, then $\<\alpha^{\vee}, \gamma\> = \<\beta^{\vee}, \gamma\>$ for all $\gamma \in \Z\wm$. \label{item:osigorthosum}
\end{enumerate} 
\end{proposition}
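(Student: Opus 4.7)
The plan is to use the definition of N-spherical roots (Definition~\ref{def:Nadapted}) together with Proposition~\ref{prop:Nspherroots_Xwm} which identifies $\Sigma^N(\wm)$ as the N-spherical roots of the canonical variety $X_\wm$. Since $\Sigma^{sc}(G)$ is exhaustively listed in Table~\ref{table:scspher}, the proof naturally splits into a case analysis on the Dynkin type of $\supp(\osig)$. For each direction (necessity and sufficiency) we would verify that conditions (i)--(v) match the admissibility axioms of Luna's combinatorial theory row by row.

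First I would treat the conditions that are essentially uniform across all cases. Condition (i) is needed because simple roots belong to a separate mechanism for producing N-spherical roots (they arise from colors, not from the spherically closed machinery). Condition (ii) expresses the obvious requirement that an N-spherical root must lie in the lattice generated by the weight monoid. Condition (iii) is the standard compatibility of a spherical root with the set $S^p$: any $\alpha \in \supp(\osig)$ with $\<\alpha^\vee,\osig\>=0$ should fix the open $B$-orbit in the rank-one localization at $\osig$, and must therefore belong to $S^p(\wm)$. The slight exceptions that appear in the $\sB_n$ and $\sC_n$ rows just reflect the particular Bourbaki-numbered shape of $\osig$ there; I would extract them directly from Table~\ref{table:scspher} by listing which simple roots pair trivially with $\osig$.

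Next I would handle the two type-specific conditions (iv) and (v). For $\osig=2\alpha$ with support $\sA_1$, the two rank-one affine spherical varieties distinguished by $\alpha$ versus $2\alpha$ are told apart precisely by the parity of $\<\alpha^\vee,\gamma\>$ on weights $\gamma \in \Z\wm$; this produces (iv) as both necessary and sufficient to pick out the ``doubled'' spherical root. For $\osig = \alpha+\beta$ with $\alpha\perp\beta$ and support $\sA_1\times \sA_1$, the analogous dichotomy is between the ``diagonal'' spherical root $\alpha+\beta$ and the pair of independent simple roots $\alpha,\beta$; the diagonal one occurs exactly when $\alpha^\vee$ and $\beta^\vee$ take the same value on every $\gamma \in \Z\wm$, giving (v). Both verifications are straightforward translations of the rank-one classification into lattice conditions on $\Z\wm$.

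For sufficiency I would, for each row of Table~\ref{table:scspher}, exhibit the wonderful rank-one model realizing $\osig$ as a spherical root and check, via Proposition~\ref{prop:Nspherroots_Xwm}, that under the stated conditions on $\wm$ the root $\osig$ does appear in $\Sigma^N(\wm)$. The main obstacle I expect is not any deep ingredient but the bookkeeping of the case-by-case verification: each entry in Table~\ref{table:scspher} must be matched with the correct subset of $\supp(\osig)$ contained in $S^p(\wm)$ and, where relevant, with the arithmetic constraint coming from (iv) or (v). Since Luna's axioms are symmetric in $\alpha$ and $\beta$ only for the orthogonal $\sA_1\times\sA_1$ case, special attention is needed for the asymmetric $\sB_n$, $\sC_n$ and $\sG_2$ entries where the condition on $\supp(\osig)\cap S^p(\wm)$ is not literally ``all simple roots pairing trivially with $\osig$''.
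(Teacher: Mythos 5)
Your route is genuinely different from the paper's: the paper does not redo any rank-one case analysis, but simply cites \cite[Corollary 2.17]{msfwm} and observes that, because $\wm$ is $G$-saturated, one condition of that corollary is redundant (by Definition~\ref{def:G-saturated}) and another becomes equivalent to ``$\osig$ is not a simple root'' via Proposition~\ref{prop:Gsatnosimadap}. Measured against that, your sketch has real gaps. First, you invoke Proposition~\ref{prop:Nspherroots_Xwm} as an input, but in the paper that proposition is \emph{deduced from} Proposition~\ref{prop:adapnsphroots}, so your argument is circular as it stands; moreover $\Sigma^N(\wm)$ is defined (Definition~\ref{def:Nadapted}) as the set of $\sigma$ for which \emph{some} affine spherical variety has weight monoid $\wm$ and $\Sigma^N=\{\sigma\}$, not as $\Sigma^N(X_\wm)$, so the object you need to control is existence of such a variety, not the invariants of a fixed one.

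Second, your argument never uses the hypothesis that $\wm$ is $G$-saturated, yet the statement is false without it: for a general normal monoid the characterization of adapted/N-adapted roots (Proposition~\ref{prop:adapsr} and \cite[Corollary 2.17]{msfwm}) contains additional conditions involving $E(\wm)$ and $a(\sigma)$, which are exactly what $G$-saturation renders redundant. Concretely, your sufficiency step ``exhibit the wonderful rank-one model realizing $\osig$'' only produces a rank-one variety with spherical root $\osig$; it does not produce an \emph{affine} spherical variety whose weight monoid is exactly the given $\wm$, and bridging that gap is where the $E(\wm)$-conditions (and hence $G$-saturation) enter --- this is the content of Propositions~\ref{prop:adapt-spher-roots} and \ref{prop:adapsr}, which your sketch bypasses. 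Finally, your justification of condition (i) is incorrect: simple roots can perfectly well occur as N-spherical roots of affine spherical varieties in general; they are excluded here because $G$-saturation forces $|a(\sigma)|=1$ for every $\sigma\in\Sigma^{sc}(\wm)\cap S$ (Proposition~\ref{prop:Gsatnosimadap}), so by the doubling rule of Proposition~\ref{prop:doubling_sr}(\ref{prop:doubling_sr:A}) any such $\sigma$ appears doubled in $\Sigma^N$, i.e.\ $\sigma$ itself is never N-adapted to a $G$-saturated $\wm$. Without these three ingredients the proposal does not prove the proposition.
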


Proposition~\ref{prop:adapnsphroots} is a special case of \cite[Corollary 2.17]{msfwm}, as we will show on page~\pageref{proofpropadapn}. The proof of the following proposition is in Section~\ref{sec:smoothwm}.

\begin{proposition} \label{prop:localizroots} Let $\wm$ be a
  $G$-saturated submonoid of $\dw$. 
Among all the subsets $F$ of $S$ such that the relative interior of the cone spanned by $\{\alpha^{\vee}|_{\Z\wm} \colon \alpha \in F\}$ in $\Hom_{\Z}(\wm,\Q)$ intersects $\V(\wm)$ there is a unique one, denoted $S_{\wm}$, that contains all the others.  
\end{proposition}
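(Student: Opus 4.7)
The plan is to argue that the family
\[
\mathcal{F} := \bigl\{F \subseteq S : \mathrm{relint}\, C(F) \cap \V(\wm) \neq \emptyset\bigr\}, \qquad \text{where } C(F) := \mathrm{cone}\bigl(\alpha^{\vee}|_{\Z\wm} : \alpha \in F\bigr),
\]
is closed under finite unions. Since $S$ is finite and $\mathcal{F}$ is nonempty---the empty subset lies in $\mathcal{F}$ because $\mathrm{relint}\, C(\emptyset) = \{0\} \subseteq \V(\wm)$---this closure property immediately implies that $S_\wm := \bigcup_{F \in \mathcal{F}} F$ itself belongs to $\mathcal{F}$ and is its unique largest element, which is exactly what the proposition asserts.

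For the closure step I pick $F_1, F_2 \in \mathcal{F}$ together with $\nu_i \in \mathrm{relint}\, C(F_i) \cap \V(\wm)$ for $i=1,2$, and invoke the standard fact that the relative interior of a finitely generated convex cone $\mathrm{cone}(v_1, \ldots, v_k)$ equals $\{\sum_i t_i v_i : t_i > 0\}$. This description remains valid even when the generators are not in general position, so the possible coincidences or vanishings among the $\alpha^{\vee}|_{\Z\wm}$ (the latter occurring exactly when $\alpha \in S^p(\wm)$) cause no trouble. Applying it to each $\nu_i$ yields $\nu_i = \sum_{\alpha \in F_i} t^{(i)}_\alpha\, \alpha^{\vee}|_{\Z\wm}$ with every $t^{(i)}_\alpha > 0$. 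Extending by $t^{(i)}_\alpha = 0$ for $\alpha \notin F_i$, the sum
\[
\nu_1 + \nu_2 = \sum_{\alpha \in F_1 \cup F_2}\bigl(t^{(1)}_\alpha + t^{(2)}_\alpha\bigr)\,\alpha^{\vee}|_{\Z\wm}
\]
has strictly positive coefficients and therefore lies in $\mathrm{relint}\, C(F_1 \cup F_2)$. Moreover, definition~\eqref{eq:valcone} presents $\V(\wm)$ as an intersection of linear half-spaces through the origin, so it is a convex cone and in particular closed under addition; hence $\nu_1 + \nu_2$ lies in $\V(\wm)$ as well, and $F_1 \cup F_2 \in \mathcal{F}$.

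No serious obstacle is anticipated: the argument is purely formal and uses none of the deeper theory of spherical varieties beyond the combinatorial data already in hand, namely the definition of $\V(\wm)$ as a convex polyhedral cone. The only nontrivial input is the positive-combination description of $\mathrm{relint}\, \mathrm{cone}(v_1, \ldots, v_k)$ recalled above, and the rest is bookkeeping with that characterization.
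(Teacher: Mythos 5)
Your proof is correct, and it takes a mildly different route from the one in the paper. The paper works with the single cone $C$ generated by all of $\{\alpha^{\vee}|_{\Z\wm}\colon\alpha\in S\}$ and argues inside its face lattice: given two faces whose relative interiors meet $\V(\wm)$, convexity of $\V(\wm)$ yields a point of $\V(\wm)$ in the relative interior of their convex envelope, hence a larger face with the same property; finiteness gives a maximal such face $C_m$, and $S_\wm$ is then recovered as the largest subset of $S$ whose restricted coroots generate $C_m$. You instead stay at the level of subsets $F\subseteq S$ and prove that the family of admissible subsets is closed under unions, using the description of the relative interior of a finitely generated cone as the set of strictly positive combinations of its (possibly redundant or zero) generators, together with the fact that $\V(\wm)$ is closed under addition. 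The essential convexity step --- combining the two witnesses $\nu_1,\nu_2\in\V(\wm)$ --- is the same in both arguments, but your bookkeeping is more elementary (no face lattice needed) and matches the statement more literally: for an arbitrary subset $F$ the cone $C(F)$ need not be a face of $C$, a translation the paper's proof leaves implicit when passing from faces back to subsets of $S$. The only external input you invoke, $\mathrm{relint}\,\mathrm{cone}(v_1,\dots,v_k)=\{\sum_i t_iv_i : t_i>0\}$, is indeed standard (it is the identity $\mathrm{relint}(C_1+\dots+C_k)=\mathrm{relint}\,C_1+\dots+\mathrm{relint}\,C_k$ applied to the rays $\Q_{\geq 0}v_i$), and your handling of the degenerate cases ($\alpha\in S^p(\wm)$, $F=\emptyset$) is fine.
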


\begin{remark}
\begin{enumerate}[(1)]
\item Note that $S^p(\wm) \subset S_{\wm}$ since
$\alpha^{\vee}|_{\Z\wm} =0$ for all $\alpha \in S^p(\wm)$. 
\item Determining $S_{\wm}$ is a finite (algorithmic) process. Indeed, $S \setminus S^p(\wm)$ is a finite set, and deciding for a given subset $F$ of $S\setminus S^p(\wm)$ whether the relative interior of the cone spanned by $\{\alpha^{\vee}|_{\Z\wm} \colon \alpha \in F\}$ intersects $\V(\wm)$ is equivalent to deciding whether a certain system of linear inequalities with integer coefficients has a solution in the positive rational numbers. The Fourier-Motzkin elimination algorithm does the latter in finitely many steps. 
\end{enumerate}
\end{remark}

Using $\Sigma^N(\wm)$ and $S_\wm$ we apply a smoothness criterion due to Camus (see~\cite[\S 6]{camus}) to decide whether $X_{\wm}$ is smooth. We recall and explain the criterion in Section~\ref{sec:camus}. To state the theorem, we need one more definition.  

\begin{definition} \label{def:admissible}
Let $\sr$ be the set of simple roots of a root system. Let $\sr^p$ be a subset $\sr$.  Let $\Sigma^N$ be a subset $\N\sr$. We say that the triple $(\sr, \sr^p, \Sigma^N)$ is \textbf{admissible} if there exists a finite set $I$ and for every $i \in I$ a triple $(\sr_i, \sr^p_i, \Sigma_i)$ from List~\ref{list:pat} below and an automorphism $f_i$ of the Dynkin diagram of $\sr_i$ such that the Dynkin diagram of $S$ is the union over $i\in I$ of the Dynkin diagrams of the $\sr_i$, that $\sr^p = \cup_if_i(\sr^p_i)$ and that $\Sigma^N = \cup_i f_i(\Sigma_i)$. 
\end{definition}

\begin{List}[Primitive admissible triples] \label{list:pat}
\par
\begin{enumerate}[1.]
\item $(\sr,\sr,\emptyset)$ where $\sr$ is the set of simple roots
  of an irreducible root system;
\item $(\sA_n, \{\alpha_2,\alpha_3, \ldots,\alpha_{n}\}, \emptyset)$
  for $n \ge 1$;
\item $(\sA_n, \{\alpha_1, \alpha_3, \alpha_5, \ldots, \alpha_{n-1}\},\{\alpha_1+2\alpha_2+\alpha_3, \alpha_3+ 2\alpha_4 + \alpha_5, \ldots, \alpha_{n-3}+2\alpha_{n-2}+\alpha_{n-1}\})$ for $n \ge 4$, $n$ even;
\item $(\sA_n \times \sA_k, \{\alpha_{k+2}, \alpha_{k+3}, \ldots, \alpha_n\}, \{\alpha_1+\alpha_1', \alpha_2+\alpha_2', \ldots, \alpha_k + \alpha_k'\})$ for $n > k \ge 2$;\label{item:admiss:AnAk}
\item $(\sC_n, \{\alpha_2,\alpha_3, \ldots,\alpha_{n}\}, \emptyset)$  for $n \ge 2$;
\item $(\sD_5, \{\alpha_2, \alpha_3, \alpha_4\}, \{\alpha_2 + 2\alpha_3 + \alpha_4 + 2\alpha_5\})$.
\end{enumerate}
\end{List}

\begin{remark} \label{rem:emptytriple}
Note that  we allow $I=\emptyset$ in
Definition~\ref{def:admissible}: the triple $(\emptyset, \emptyset, \emptyset)$ is
admissible. 
\end{remark}

Here is the main result of this paper (Theorem~\ref{thm:general}), specialized to the  case of $G$-saturated weight monoids. The proof is given on page~\pageref{proof_thm_main} in Section~\ref{sec:smoothwm}.

\begin{theorem}  \label{thm:main}
Let $\wm$ be a $G$-saturated monoid of dominant weights of
$G$. Let $S^p(\wm)$ be the set of simple roots that are orthogonal to
$\wm$ as in Definition~\ref{def:Sp}, $S_{\wm}$ the set  of simple roots as in Proposition~\ref{prop:localizroots} and $\Sigma^N(\wm)$ the set given by Proposition~\ref{prop:adapnsphroots}. Then $\wm$ is the weight monoid of a smooth affine spherical $G$-variety if and only if 
\begin{enumerate}[(a)]
\item $\{\alpha^{\vee}|_{\Z\wm}\colon \alpha \in S_{\wm}\setminus S^p(\wm)\}$ is a
  subset of a basis of $(\Z\wm)^*$; and \label{cond:partofbasis}
\item for all $\alpha,\beta \in S_{\wm}\setminus S^p(\wm)$ such that $\alpha\neq\beta$ and $\alpha^{\vee}|_{\Z\wm} = \beta^{\vee}|_{\Z\wm}$ we have $\alpha+\beta \in \Z\wm$; and \label{cond:sumisNsphericalroot}
\item the triple $(S_{\wm},S^p(\wm), \Sigma^N(\wm) \cap \Z S_{\wm})$ is admissible (see Definition~\ref{def:admissible}). 
\end{enumerate}
\end{theorem}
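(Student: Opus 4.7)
The plan is to apply Camus's smoothness criterion, recalled in Section~\ref{sec:camus}, to the affine spherical $G$-variety $X_{\wm}$ whose existence and uniqueness, among candidates whose coordinate ring decomposes as in~\eqref{eq:smwm} and which can possibly be smooth, follows from Corollary~\ref{cor:ms_sat_irr}. In this way the question ``is $\wm$ smooth?'' becomes ``is $X_{\wm}$ smooth?'', and the proof reduces to translating Camus's local criterion into the combinatorial conditions (a)--(c) on $\wm$.

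First I would unravel the local spherical data of $X_{\wm}$ at its unique closed $G$-orbit $Y$. By the Luna--Vust classification of spherical embeddings, $Y$ corresponds to a colored cone $(\mathcal{C}_Y, \mathcal{F}_Y)$ in the colored fan of $X_{\wm}$; affinity of $X_{\wm}$ forces $\mathcal{C}_Y$ to contain the valuation cone $\V(\wm)$, and the colors in $\mathcal{F}_Y$ are the $B$-stable prime divisors $D_{\alpha}$ whose image in $\Hom_{\Z}(\Z\wm,\Q)$ is $\alpha^{\vee}|_{\Z\wm}$. By the defining property of $S_{\wm}$ in Proposition~\ref{prop:localizroots}, these colors are exactly those indexed by $\alpha \in S_{\wm}\setminus S^p(\wm)$. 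The spherical roots of $X_{\wm}$ lying in $\Z S_{\wm}$, together with $S^p(\wm)$, then form the spherical system of the ``slice'' at $Y$, which is a smaller affine spherical variety whose smoothness is equivalent to that of $X_{\wm}$.

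Second, I would use Camus's criterion to split the smoothness of the slice into two independent requirements: (I) the coroot restrictions $\alpha^{\vee}|_{\Z\wm}$ for $\alpha \in S_{\wm}\setminus S^p(\wm)$ extend to a $\Z$-basis of $(\Z\wm)^{*}$, and whenever two of these restrictions coincide the sum $\alpha + \beta$ must be realized as an N-spherical root in $\Z\wm$; and (II) the resulting spherical system must itself be the spherical system of a smooth affine spherical variety on which some Borel acts with an open orbit, equivalently, a product of ``primitive smooth models''. Requirement (I) decomposes precisely into conditions (a) and (b), while (II), after matching Camus's list of primitive smooth models with List~\ref{list:pat}, yields condition (c).

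The main obstacle will be the technical dictionary between Camus's formulation and the N-spherical language used throughout the paper. The most delicate points are: verifying that the set $S_{\wm}$ produced by Proposition~\ref{prop:localizroots} really does index the colors of the closed orbit's colored cone (essentially a maximal-face-meeting-$\V(\wm)$ computation), and checking case-by-case that Camus's list of primitive smooth models coincides with List~\ref{list:pat}. Along the way the role of condition (b) becomes transparent: it supplies exactly the spherical root $\alpha + \alpha'$ demanded by item~\ref{item:admiss:AnAk} of the list whenever two coroot restrictions $\alpha^{\vee}|_{\Z\wm}$ and $\beta^{\vee}|_{\Z\wm}$ collide.
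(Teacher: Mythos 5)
Your overall route is the one the paper takes: reduce to the unique generic variety $X_{\wm}$ (Corollary~\ref{cor:ms_sat_irr}), apply Camus's criterion at its closed orbit, and translate the two halves of that criterion into (a)+(b) and (c); the paper merely packages the first part as the general Theorem~\ref{thm:general} and then specializes. There is, however, a genuine gap at the step where you declare the local divisor data. You assert that the divisors entering Camus's criterion at the closed orbit are exactly the colors $D_\alpha$ for $\alpha\in S_{\wm}\setminus S^p(\wm)$, with functionals $\alpha^{\vee}|_{\Z\wm}$. For a general normal weight monoid this is false on three counts: the relevant tuple $(\rho_X(D))_{D\in\D_X}$ also runs over the $G$-stable prime divisors of $X_{\wm}$; a color moved by a simple root $\alpha\in\Sigma^{sc}(X_\wm)$ has functional an element of $a(\alpha)$, which need not be a coroot multiple; and a color moved by $\alpha$ with $2\alpha\in\Sigma^{sc}(X_\wm)$ has functional $\tfrac12\alpha^{\vee}|_{\Z\wm}$, not $\alpha^{\vee}|_{\Z\wm}$. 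Ruling all of this out is precisely where $G$-saturation does work beyond guaranteeing uniqueness of $X_{\wm}$: one needs $|a(\alpha)|=1$ for every $\alpha\in\Sigma^{sc}(\wm)\cap\sr$ (Proposition~\ref{prop:Gsatnosimadap}), the absence of $G$-stable prime divisors, i.e.\ $\codim_X(X\setminus X_o)\geq 2$ (Proposition~\ref{prop:lunaXGsat}), and then, under the admissibility hypothesis (which one may assume on both sides of the equivalence), that neither $\alpha$ nor $2\alpha$ lies in $\Sigma^{sc}(\wm)$ for $\alpha\in S_{\wm}\setminus S^p(\wm)$. Your outline establishes none of these, yet they are exactly what makes (a) a statement about the coroot restrictions themselves and makes (b) equivalent to injectivity of the Camus tuple (compare Example~\ref{ex:need_tuple}).

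Relatedly, your assertion that ``Camus's list of primitive smooth models coincides with List~\ref{list:pat}'' is not correct as stated: the full list of socles of spherical modules is Table~\ref{table:socles}, which contains many entries with nonempty $\V$-component or with colors not proportional to coroots; List~\ref{list:pat} is exactly the sub-list surviving the constraints above, and only after those constraints are in place does matching the localized socle against List~\ref{list:pat} become equivalent to condition (c). Finally, to get injectivity of the tuple from (b) you also need that two distinct $\alpha,\beta\in S_{\wm}\setminus S^p(\wm)$ with $\alpha^{\vee}|_{\Z\wm}=\beta^{\vee}|_{\Z\wm}$ are automatically orthogonal; in the paper this is read off from the admissible triples (only type~\ref{item:admiss:AnAk} contains non-orthogonal pairs outside $S^p$, and for those the coroots differ on $\Sigma^N(\wm)\cap\Z S_{\wm}$), a point missing from your sketch. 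All of these gaps are fillable—indeed the paper fills them—but they are the substantive content of the $G$-saturated case rather than routine dictionary work.
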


\begin{remark}
Our proof of Theorem~\ref{thm:main} relies on the classification of wonderful varieties by spherical systems, which was conjectured in \cite{luna-typeA} and was known as the Luna Conjecture. It is proved in \cite{bravi-pezzini-primwonderful}. Specifically, we use the classification in Proposition~\ref{prop:luna-adapt}; see also Remark~\ref{rem:thm:general}(\ref{rem:thm:general:lc}). Another proof of the Luna Conjecture has been proposed in \cite{cupit-wvgr-prep}. 
\end{remark}

\begin{remark} 
\begin{enumerate}[(a)]
\item Several of the statements in Sections \ref{subsec:spher_roots_generic_wiht_wm} -- \ref{subsec:Gsat} appeared in \cite{avdeev&cupit-irrcomps-arxivv2}, cf.~Remark~\ref{rem:acf}(\ref{acf_comp}). However, the present
paper is independent of \loccit\  
The content of Sections~\ref{subsec:spherroots_adapted_weightmonoid} and \ref{subsec:Gsat} was inspired by an unpublished working document of Luna's from 2005.
\item In Section~\ref{sec:camus} we present Camus's smoothness criterion, with a complete exposition of its original proof following~\cite{camus}. Gagliardi has published another proof of the criterion in the paper \cite{gagliardi-camus}, which also includes the so-called ``Luna diagrams'' of the spherical modules.
\end{enumerate}

\end{remark}

As an application we study affine model varieties of simply connected semisimple groups.  Recall that a quasi-affine $G$-variety $Y$ is called a \textbf{model} $G$-variety if the $G$-module $\C[Y]$ contains every irreducible representation of $G$ with multiplicity one. This is equivalent to being a quasi-affine spherical $G$-variety with weight monoid equal to $\dw$. Homogeneous model $G$-varieties were first introduced in~\cite{bernstein-gelfand-gelfand-model} and then further studied in  \cite{gelfand-zelevinski-model1, gelfand-zelevinski-model2, adams-huang-vogan, luna-model}.

When $G$ is simply connected and semisimple, $\dw$ is free and $G$-saturated. Applying Theorem~\ref{thm:main} to the monoid $\dw$, we obtain the following theorem. Its proof is given in Section~\ref{sec:model}.

\begin{theorem} \label{thm:model}
Let $G$ be a simply connected semisimple linear algebraic group. There
exists a smooth affine model $G$-variety if and only if the simple factors of $G$ are of type $\sA$  or of type $\sC$. 
\end{theorem}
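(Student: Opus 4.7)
The proof specializes Theorem~\ref{thm:main} to $\wm = \dw$. The data occurring in Theorem~\ref{thm:main} decompose factorwise when $G = G_1 \times G_2$ is a product (condition (v) of Proposition~\ref{prop:adapnsphroots} rules out ``cross-factor'' $\sA_1 \times \sA_1$ spherical roots), and admissibility of the resulting triple holds for $G$ iff it holds for each $G_i$. So it suffices to treat simple, simply connected $G$. In that case $\Z\dw$ is the full weight lattice $P$, its dual $(\Z\dw)^*$ is the coroot lattice $Q^\vee$ with $\Z$-basis $\{\alpha_1^\vee, \ldots, \alpha_r^\vee\}$, and $S^p(\dw) = \emptyset$ because $\langle\omega_i, \alpha_i^\vee\rangle = 1$ for every $i$.

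Under these circumstances conditions (a) and (b) of Theorem~\ref{thm:main} are automatic: each $\alpha^\vee|_{\Z\dw}$ is a standard basis vector of $Q^\vee$, and distinct simple coroots take distinct values on $\dw$ (evaluate on distinct fundamental weights). Everything therefore reduces to condition (c). Inspecting List~\ref{list:pat}, the only primitive admissible triples with empty $\sr^p$ are $(\emptyset, \emptyset, \emptyset)$ and $(\sA_1, \emptyset, \emptyset)$ (item 2 with $n = 1$). Hence $(S_\wm, \emptyset, \Sigma^N(\dw) \cap \Z S_\wm)$ is admissible iff $S_\wm$ is an independent set of nodes in the Dynkin diagram of $G$ (i.e., no two of its elements are adjacent) and $\Sigma^N(\dw) \cap \Z S_\wm = \emptyset$. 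The latter is automatic once $S_\wm$ is independent: any $\bar\sigma \in \Sigma^{sc}(G)$ supported on pairwise orthogonal simple roots has support of type $\sA_1$ or $\sA_1 \times \sA_1$, and such $\bar\sigma$ are excluded from $\Sigma^N(\dw)$ by conditions (i), (iv), and (v) of Proposition~\ref{prop:adapnsphroots} (using $P^* = Q^\vee$ and the simple connectivity of $G$). The theorem thus reduces to the purely Dynkin-combinatorial question of whether $S_\wm$ is an independent subset of $S$.

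Finally, one carries out a type-by-type computation. Proposition~\ref{prop:adapnsphroots} together with compatibility rule (iii) for $S^p = \emptyset$ shows that the surviving elements of $\Sigma^N(\dw)$ are the adjacent-pair sums $\alpha_i + \alpha_{i+1}$ of support types $\sA_2$ or $\sB_2$, plus $\alpha_1 + \alpha_2$ in $\sG_2$; all exceptional spherical roots in Table~\ref{table:scspher} (the $\sA_3$, $\sB_3$, $\sC_n$ ($n\ge 3$), $\sD_n$, $\sF_4$, and long $\sG_2$ rows) contain a simple root in their support that pairs to zero with them, so (iii) fails. Solving the finite linear system defining $\V(\dw)$ then yields $S_\wm$ via Proposition~\ref{prop:localizroots}. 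For types $\sA_n$ and $\sC_n$, direct calculation shows that $S_\wm$ consists of isolated simple roots (explicitly, odd-indexed ones when nonempty), hence is independent and (c) holds. For the remaining simple types $\sB_n$ ($n \ge 3$), $\sD_n$ ($n \ge 4$), $\sE_6, \sE_7, \sE_8, \sF_4, \sG_2$, one exhibits an element of $\V(\dw)$ that is a strictly positive combination of coroots including an adjacent pair; for instance, $\alpha_1^\vee + \alpha_2^\vee + \alpha_3^\vee$ lies in $\V(\dw)$ for $\sB_3$, $\alpha_1^\vee + \alpha_2^\vee$ for $\sG_2$, and $\alpha_2^\vee + \alpha_3^\vee + \alpha_4^\vee + \alpha_5^\vee$ for $\sE_6$, with analogous vectors for the other types. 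Consequently $S_\wm$ contains two adjacent simple roots and (c) fails. The main obstacle is this type-by-type linear-programming computation: each case is elementary, but collectively they form the bulk of the argument, requiring care with Bourbaki conventions and the various exceptional spherical roots that can potentially appear in the non-$\sA$, non-$\sC$ types.
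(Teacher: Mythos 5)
Your proposal follows essentially the same route as the paper: specialize Theorem~\ref{thm:main} to $\wm=\dw$, observe $S^p(\dw)=\emptyset$ and that conditions (\ref{cond:partofbasis}) and (\ref{cond:sumisNsphericalroot}) are automatic since the simple coroots form the basis of $(\Z\dw)^*$ dual to the fundamental weights, compute $\Sigma^N(\dw)$ to be the sums of two non-orthogonal simple roots (this is Proposition~\ref{prop:model}), reduce to the simple factors, and settle admissibility by a type-by-type analysis of $S_{\dw}$ via the linear inequalities defining $\V(\dw)$ (the paper does this in Lemma 5.2 and its Corollary; your variant of only exhibiting, for the non-$\sA$, non-$\sC$ types, a strictly positive coroot combination in $\V(\dw)$ involving an adjacent pair, rather than determining $S_{\dw}$ exactly, is a harmless shortcut). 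One step is stated incorrectly, though it is repairable with facts you already have: it is not true that the only primitive admissible triples with empty $\sr^p$ are $(\emptyset,\emptyset,\emptyset)$ and $(\sA_1,\emptyset,\emptyset)$; item~\ref{item:admiss:AnAk} of List~\ref{list:pat} with $n=k+1$ also has empty second component, e.g.\ $(\sA_3\times\sA_2,\emptyset,\{\alpha_1+\alpha_1',\alpha_2+\alpha_2'\})$. Your "hence" (admissible iff $S_{\dw}$ is an independent set of nodes with $\Sigma^N(\dw)\cap\Z S_{\dw}=\emptyset$) therefore needs one extra sentence: any constituent of type \ref{item:admiss:AnAk}.\ would force sums of two orthogonal simple roots into $\Sigma^N(\dw)\cap \Z S_{\dw}$, which is impossible since, as you note via condition (v) of Proposition~\ref{prop:adapnsphroots} (and as Proposition~\ref{prop:model} records), $\Sigma^N(\dw)$ contains no such sums; the same remark is what rules out primitive triples straddling two simple factors in your reduction step. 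With that patch the argument is correct and coincides in substance with the paper's proof.
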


The ``if'' part of Theorem~\ref{thm:model} is not new, see Example~\ref{ex:model} below.

\begin{example}\label{ex:model}
For all $n,k \in \Z_{> 0}$ the groups $\SL(n)$, $\Sp(2n)$ and $\SL(k) \times \Sp(2n)$ have smooth
affine model varieties, while e.g.\ $\Spin(k)$ does not. The existence of such varieties for $\SL(n)$ and $\Sp(2n)$ was already known: the variety for the former group with $n>1$ odd is $\SL(n)/\Sp(n-1)$, and for $n$ even it is the homogeneous vector bundle $\SL(n)\times^{\Sp(n)} \C^n$, while the variety for $\Sp(2n)$ is the homogeneous vector bundle $\Sp(2n)\times^{\Sp(2a)\times\Sp(2b)}\C^{2b}$ where $a=b=n/2$ if $n$ is even, and $a=b-1=(n-1)/2$ if $n$ is odd. In order to decide the existence of a smooth affine model variety in general, one can apply Theorem~\ref{thm:main} for $\wm = \dw$ when $G$ is any connected reductive group. For example, this way one can recover that the group $\SO(2n + 1)$, with $n\geq 1$, has a smooth affine model variety. In fact, it is $\SO(2n + 1)/ \GL(n)$ (cf.\ \cite{luna-model}).
\end{example}

\subsection*{Acknowledgement}
The authors started this project at the Institut Fourier in the summer of 2011, and thank the institute and Michel Brion for hosting them. They also thank Domingo Luna for fruitful discussions and suggestions, and for sharing his 2005 working document in which several of the ideas used in Section~\ref{sec:combinatorics} were outlined. 
The first named author was partially supported by the DFG Schwerpunktprogramm 1388 -- Darstellungstheorie.
The second named author thanks Friedrich Knop and the Emmy Noether Zentrum for hosting him in the summers of 2012, 2013, 2014 and 2015. He received support from the City University of New York PSC-CUNY Research Award Program and from the National Science Foundation through grant DMS 1407394.

\section{Spherical combinatorics and Alexeev and Brion's moduli scheme} \label{sec:combinatorics}

\subsection{Alexeev and Brion's moduli scheme}
We recall that the weight monoid $\Gamma$ of an affine spherical variety $X$ is finitely generated, because $\CC[X]$ is a finitely generated ring. Since $X$ is normal, its weight monoid satisfies the following equality in $\wl \otimes_{\Z}\Q$:
\begin{equation}
\Z\wm \cap \Q_{\ge 0}\wm = \wm. \label{eq:3}
\end{equation}
We will call a submonoid $\wm$ of $\dw$ satisfying conditition~(\ref{eq:3}) \textbf{normal}. Notice that any such $\wm$ is finitely generated by Gordan's lemma.

Let now $\wm$ be a normal submonoid of $\dw$. In \cite{alexeev&brion-modaff}, Alexeev and Brion introduced a moduli scheme $\ms$, which parametrizes affine spherical $G$-varieties with weight monoid $\wm$. To describe $\ms$ more precisely, put $V(\wm) := \oplus_{\lambda \in \wm} V(\lambda)$ and equip $V(\wm)^U$ with a $T$-multiplication law by choosing an isomorphism $V(\wm)^U \cong \C[\wm]$. The (closed) points of $\ms$ are in one-to-one correspondence with the $G$-multiplication laws on $V(\wm)$ that extend the multiplication law on $V(\wm)^U$. Alexeev and Brion showed that \emph{$\ms$ is an affine scheme of finite type over $\C$ which represents the functor $\mathsf{Schemes} \to \mathsf{Sets}$ that associates with
any scheme $Z$, the set of families of algebra structures of type $\wm$ over $Z$}. For an introduction to this moduli scheme, we refer to \cite[Section 4.3]{brion-ihs}. 

Thanks to $\ms$, we can make precise the notion of ``generic'' affine spherical $G$-variety with weight monoid $\wm$. Alexeev and Brion equipped $\ms$ with an action of the maximal torus $T$ of $G$ and showed that \emph{there is a natural bijection between the the $T$-orbits on $\ms$ and the isomorphism classes of affine spherical $G$-varieties with weight monoid $\wm$}, see \cite[Theorem 1.12]{alexeev&brion-modaff}. When $X$ is an affine spherical variety with weight monoid $\wm$, we will write $T\cdot [X]$ for the $T$-orbit on $\ms$ corresponding to the $G$-isomorphism class of $X$. 

\begin{definition}
Let $X$ be an affine spherical $G$-variety with weight monoid $\wm$. We will say that $X$ is \textbf{generic} if $T\cdot[X]$ is an open subset of $\ms$. 
\end{definition}

\begin{proposition}[{\cite[Corollary 2.9]{alexeev&brion-modaff}}] \label{prop:smoothgeneric}
If $X$ is a smooth affine spherical $G$-variety with weight monoid $\wm$, then $X$ is generic.
\end{proposition}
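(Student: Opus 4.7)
The plan is to combine three ingredients: the moduli-theoretic construction of $\ms$, the openness of smoothness in flat families, and Losev's uniqueness theorem for smooth affine spherical varieties (cited in the introduction of the paper).

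First, since $\ms$ represents the functor of families of $G$-algebra structures of type $\wm$, it carries a universal family $\pi\colon \mathcal{X}_{\wm}\to \ms$, whose fiber over any closed point $[Y]$ is $G$-equivariantly isomorphic to $Y$. The family is flat: all fibers share the same underlying $G$-module $V(\wm) = \oplus_{\lambda \in \wm}V(\lambda)$, only the multiplication law varies. Since $\ms$ is of finite type over $\C$, the locus
\[
U := \{[Y] \in \ms \colon Y \text{ is smooth}\}
\]
is open in $\ms$ by the standard openness of the smooth-fibers locus in a flat family of finite type.

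Next, I would observe that $U$ is $T$-stable. Indeed, by Theorem~1.12 of \cite{alexeev&brion-modaff}, two points of $\ms$ in the same $T$-orbit correspond to $G$-equivariantly isomorphic varieties, and smoothness is preserved under $G$-equivariant isomorphism. In particular, if $X$ is smooth then $[X] \in U$ and $T\cdot[X] \subset U$.

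Finally, I would invoke Losev's theorem to conclude the argument. By that theorem any two smooth affine spherical $G$-varieties with the same weight monoid $\wm$ are $G$-equivariantly isomorphic, so in the bijection of Theorem~1.12 of \cite{alexeev&brion-modaff} the set $U$ can contain at most one $T$-orbit. Combined with $T\cdot[X] \subset U$, this yields $U = T\cdot[X]$, and the latter is therefore open, showing that $X$ is generic. The only technical point demanding care is the flatness of $\pi$ together with the openness of its smooth locus; everything else is formal. I expect no substantial obstacle since both of these facts are immediate from the construction of $\ms$ in \cite{alexeev&brion-modaff}.
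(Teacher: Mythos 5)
The weak link is your claim that $U=\{[Y]\in\ms:Y\text{ smooth}\}$ is open ``by the standard openness of the smooth-fibers locus in a flat family of finite type.'' There is no such general statement for families with non-proper fibers. What is true is that the non-smooth locus $\Sigma\subset\mathcal{X}_\wm$ of the (flat, finite type) morphism $\pi$ is closed, and that $[Y]$ has a singular fiber exactly when $[Y]\in\pi(\Sigma)$; but $\pi$ is an affine morphism with positive-dimensional fibers, so it is not proper, and the image of a closed set under it need not be closed --- in general the smooth-fiber locus of a non-proper flat family is only constructible (a one-parameter family of affine curves whose singular point runs off to infinity as the parameter tends to $0$ already gives a smooth special fiber inside a family of singular ones). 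Since the openness of $U$ is precisely what makes $T\cdot[X]$ open, and being open is exactly the content of ``generic,'' this step cannot be waved through: as written, the argument has a genuine gap, and it is not the harmless technicality you suggest at the end.

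The gap can be closed, but by an equivariant argument rather than a general one. The universal algebra underlying $\pi$ is $\mathcal{O}_{\ms}\otimes_{\C}V(\wm)$ with $G$ acting on the second factor, so $\C[\mathcal{X}_\wm]^G=\C[\ms]$ and $\pi$ is the good quotient $\mathcal{X}_\wm\to\mathcal{X}_\wm\quot G=\ms$; good quotient maps send closed $G$-stable subsets to closed subsets, and $\Sigma$ is closed and $G$-stable (by flatness, $\pi$ is smooth at a point if and only if the fiber through it is smooth there). Hence $\pi(\Sigma)$ is closed and $U$ is open, after which your use of the $T$-stability of $U$, of \cite[Theorem 1.12]{alexeev&brion-modaff}, and of Losev's uniqueness theorem correctly yields $U=T\cdot[X]$. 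Note also that the paper does not prove this proposition at all: it simply cites \cite[Corollary 2.9]{alexeev&brion-modaff}, whose proof is a deformation-theoretic computation of the tangent space of $\ms$ at $[X]$, so even once repaired your route (universal family plus Losev) is genuinely different from, and logically heavier than, the source it would replace.
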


Thanks to \cite[Corollary 3.4]{alexeev&brion-modaff} we know that that there are, up to isomorphism, only finitely many affine spherical varieties with a given weight monoid $\wm$. Equivalently, \emph{$\ms$ contains only finitely many $T$-orbits}. This implies the following proposition.

\begin{proposition} \label{prop:denseTorb_irrcomp}
Let $\wm$ be a normal submonoid of $\dw$. Every irreducible component of $\ms$ contains a (unique) open (and dense) $T$-orbit. Equivalently, for every irreducible component $Z$ of $\ms$ there exists a unique $T$-orbit $T\cdot [X]$ on $\ms$ such that $Z = \overline{T\cdot [X]}$, where $Z$ is equipped with its reduced induced scheme structure and $\overline{T\cdot [X]}$ is the closure of $T\cdot[X]$. 
\end{proposition}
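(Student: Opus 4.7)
The plan is to use the two structural facts about $\ms$ recalled just above the proposition: $\ms$ is a $T$-scheme of finite type over $\C$, and it contains only finitely many $T$-orbits. Combined with the fact that $T$ is connected (hence irreducible as a variety), this should force each irreducible component to be the closure of a single $T$-orbit.

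First I would argue that $T$ stabilizes every irreducible component of $\ms$ setwise. Indeed, the action morphism $T \times \ms \to \ms$ is continuous and $T$ is irreducible, so for any irreducible component $Z$ of $\ms$ the image $T \cdot Z$ is irreducible and contains $Z$; by maximality $T \cdot Z = Z$. Equip $Z$ with its reduced induced structure; then $Z$ is a closed, irreducible, $T$-stable subscheme of $\ms$.

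Next, by the finiteness statement cited from \cite[Corollary 3.4]{alexeev&brion-modaff}, the set of $T$-orbits meeting $Z$ is finite, say $\{T\cdot[X_1],\ldots,T\cdot[X_k]\}$. Because each $T$-orbit is the orbit of an algebraic group action on a finite-type $\C$-scheme, each $T\cdot[X_i]$ is locally closed in $\ms$, hence open in its closure $\overline{T\cdot[X_i]}$. We obtain
\[
Z \;=\; \bigcup_{i=1}^{k} \overline{T\cdot[X_i]}.
\]
Since $Z$ is irreducible and this is a finite union of closed subsets, there exists an index $i$ with $\overline{T\cdot[X_i]}=Z$. For that $i$, the orbit $T\cdot[X_i]$ is open in $Z$ and dense in $Z$, which is the existence part.

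For uniqueness, suppose both $T\cdot[X_i]$ and $T\cdot[X_j]$ are open and dense in $Z$. Two dense open subsets of an irreducible space intersect nontrivially, but distinct $T$-orbits are disjoint, so $T\cdot[X_i]=T\cdot[X_j]$. There is no real obstacle here beyond invoking that orbits of algebraic group actions are locally closed and that $T$, being connected, preserves irreducible components; the proof is essentially a bookkeeping exercise once the finiteness of $T$-orbits from \cite[Corollary 3.4]{alexeev&brion-modaff} is in hand.
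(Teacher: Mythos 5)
Your argument is correct and is exactly the deduction the paper has in mind: it states the proposition as an immediate consequence of the finiteness of $T$-orbits on $\ms$ (from \cite[Corollary 3.4]{alexeev&brion-modaff}), and your proof simply spells out that standard implication ($T$-stability of components via connectedness of $T$, finite union of orbit closures, irreducibility, and local closedness of orbits for uniqueness). Nothing is missing.
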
  

It follows from Proposition~\ref{prop:denseTorb_irrcomp} that the map $X \mapsto \overline{T\cdot[X]}$ yields a bijection between isomorphism classes of generic varieties with weight monoid $\wm$ and irreducible components of $\ms$.

Here is another result from \cite{alexeev&brion-modaff} we will need. It establishes a crucial link between the geometry of $\ms$ and a combinatorial invariant of the varieties $\ms$ parametrizes: their root monoids. We recall that the \textbf{root monoid} $\mathscr M_X$ of a quasi-affine $G$-variety $X$ is the
submonoid of $\wl$ generated by 
\[\{\lambda+\mu-\nu \mid \lambda, \mu,
\nu \in \dw \text{ such that } \C[X]_{(\nu)} \cap
(\C[X]_{(\lambda)}\C[X]_{(\mu)}) \neq 0\},\]
where for $\gamma \in \dw$ we used $\C[X]_{(\gamma)}$ for the isotypic
component of type $\gamma$ in $\C[X]$ and
$\C[X]_{(\lambda)}\C[X]_{(\mu)}$ is the subspace of $\C[X]$
spanned by the set $\{fg \colon f \in \C[X]_{(\lambda)}, g \in
\C[X]_{(\mu)}\}$.   By \cite[Theorem 1.3]{knop-auto}, the saturation of $\mathscr M_X$, that is, the intersection of the cone spanned by $\mathscr M_X$ and the group generated by $\mathscr M_X$, is a free submonoid of $\rl$ (cf.~Remark~\ref{rem:acf}).

\begin{proposition}[{\cite[Proposition 2.13]{alexeev&brion-modaff}}] \label{prop:monTorbclos}
If $X$ is an affine spherical $G$-variety with weight monoid $\wm$, then the closure $\overline{T\cdot[X]}$ of the corresponding $T$-orbit on $\ms$ is $T$-isomorphic to $\Spec \C[\mathscr M_X]$, where $\mathscr M_X$ is the root monoid of $X$.
\end{proposition}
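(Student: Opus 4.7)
The plan is to work directly with Alexeev and Brion's realization of $\ms$ as a closed subscheme of an affine space whose coordinates are the structure constants of the $G$-multiplication laws on $V(\wm) = \bigoplus_{\lambda \in \wm} V(\lambda)$ extending the fixed $T$-multiplication on $V(\wm)^U$. For each triple $(\lambda, \mu, \sigma)$ with $\lambda, \mu \in \wm$ and $V(\sigma)$ occurring in the decomposition of $V(\lambda) \otimes V(\mu)$, let $m^{\sigma}_{\lambda, \mu}$ denote the coordinate function on $\ms$ recording the corresponding component of the product; these generate $\C[\ms]$ as a $\C$-algebra. A direct computation with the $T$-action, which descends from $t \cdot v = \chi(t) v$ on each $V(\chi) \subset V(\wm)$, shows that $m^{\sigma}_{\lambda,\mu}$ is a $T$-weight vector on $\ms$ of weight $\lambda + \mu - \sigma \in \N S$.

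Next I would identify the orbit $T \cdot [X]$ as a homogeneous $T$-space. Its stabilizer in $T$ is the common kernel of the characters $\lambda + \mu - \sigma$ for which $m^{\sigma}_{\lambda, \mu}([X]) \neq 0$; but these characters are precisely the generators of $\mathscr M_X$ by its very definition. Consequently the character group of $T \cdot [X]$ is the subgroup $\Z \mathscr M_X$ of $\wl$, and $\C[T \cdot [X]] \cong \C[\Z \mathscr M_X]$ as $T$-algebras.

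The scheme-theoretic closure $\overline{T \cdot [X]}$ then has coordinate ring $A$ equal to the image of the restriction map $\C[\ms] \to \C[\Z \mathscr M_X]$. A $T$-weight vector on $\ms$ is nonzero on $T \cdot [X]$ if and only if it is nonzero at $[X]$, since for a weight vector $f$ of weight $\chi$ one has $f(t \cdot [X]) = \chi(t)^{-1} f([X])$. Applied to the generators $m^{\sigma}_{\lambda,\mu}$, this says that $A$ is generated as a $\C$-algebra by $T$-weight vectors whose weights run through precisely the generators of $\mathscr M_X$. Since $\C[\Z \mathscr M_X]$ is one-dimensional in each $T$-weight, this forces $A = \bigoplus_{m \in \mathscr M_X} \C \cdot e^m = \C[\mathscr M_X]$ as $T$-equivariant algebras, and taking $\Spec$ yields the claimed $T$-isomorphism.

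The main obstacle is the initial set-up: one has to carefully unwind the construction of $\ms$ from \cite{alexeev&brion-modaff} to justify both that the $m^{\sigma}_{\lambda, \mu}$ generate $\C[\ms]$ and that they transform as claimed under the $T$-action, paying attention to the fact that the structure constants contributing to the multiplication on $V(\wm)^U$ are fixed (have weight zero and so play no role in the orbit closure analysis). Once this is in hand the argument is essentially toric, since $\overline{T \cdot [X]}$ is an affine $T$-toric variety and its coordinate ring is entirely controlled by which $T$-weights survive the restriction, which is exactly the content of the root monoid $\mathscr M_X$.
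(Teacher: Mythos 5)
The paper does not prove this statement at all: it is quoted directly from Alexeev--Brion, so the only thing to compare your argument with is their original proof, and your proposal is essentially a reconstruction of it. Granting the two structural inputs you defer --- that $\ms$ is a closed subscheme of the affine space of structure constants, so that the functions $m^\sigma_{\lambda,\mu}$ generate $\C[\ms]$, and that the $T$-action rescales the $(\lambda,\mu,\sigma)$-block by the character $\lambda+\mu-\sigma$ --- your toric argument goes through: a weight vector restricted to the orbit is its value at $[X]$ times a character, the blocks not vanishing at $[X]$ correspond (via $\C[X]\cong V(\wm)$ as $G$-modules and semisimplicity of the isotypic decomposition) exactly to the generators of $\mathscr M_X$, and the image of $\C[\ms]$ in $\C[T\cdot[X]]$ is therefore the monoid algebra $\C[\mathscr M_X]$; note your stabilizer computation, though correct, is not actually needed for this. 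Two points deserve a bit more care than you give them. First, $\Hom_G(V(\lambda)\otimes V(\mu),V(\sigma))$ may have dimension greater than one, so $m^\sigma_{\lambda,\mu}$ is really a block of coordinates; this is harmless because all coordinates in the block have the same weight and ``$\lambda+\mu-\sigma$ is a generator of $\mathscr M_X$'' means ``some coordinate of the block is nonzero at $[X]$'', but it should be said. Second, the deferred inputs are not incidental: they are precisely Alexeev and Brion's representability/finiteness results and their definition of the torus action (which factors through the adjoint torus and carries a sign convention you must fix to land on $\C[\mathscr M_X]$ rather than $\C[-\mathscr M_X]$), so your write-up is best viewed as a correct outline of their proof rather than an independent one.
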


\begin{remark} \label{rem:acf}
\begin{enumerate}[(a)]
\item In their recent preprints \cite{avdeev&cupit-irrcomps-arxivv2,avdeev&cupit-newold-arxivv2} Avdeev and Cupit-Foutou have proposed a proof of Knop's conjecture that the root monoid $\mathscr M_X$ of an affine spherical variety $X$ is free, and of Brion's conjecture in \cite{brion-ihs} that the irreducible components of $\ms$, equipped with their reduced induced scheme structure, are affine spaces, under the assumption that $\wm$ is normal. 
\item \label{acf_comp} Up to technicalities, Proposition~\ref{prop:orbincl}(\ref{orbincl_item1}), Proposition~\ref{prop:orbincl}(\ref{orbincl_item2}), Corollary~\ref{cor:genmaxadap}, Corollary~\ref{cor:msirreducible}, Proposition~\ref{prop:luna-adapt} and Corollary~\ref{cor:ms_sat_irr}(\ref{cor:ms_sat_irr_a}) below are the same statements as Corollary~2.8(b), Proposition~3.14, Theorem~4.9, Corollary~4.10, Theorem~4.8 and Theorem~6.5, respectively, in~\cite{avdeev&cupit-irrcomps-arxivv2} (for the last one, this is true taking \cite[Corollary~6.3]{avdeev&cupit-irrcomps-arxivv2} into account).
\end{enumerate}
\end{remark}

\subsection{Luna invariants and spherical closure}
In this section, we first recall some basic notions in the theory of spherical varieties.
For more details we refer to
\cite{knop-lv, luna-typeA}. We then state Proposition~\ref{prop:doubling_sr}, which is essentially due to Losev and which describes the relationship between the three standard normalizations of the so-called `spherical roots' of a spherical variety.  

Let $X$ be a spherical $G$-variety with open orbit $G/H$. The basic invariants the theory of spherical varieties associates to $X$ are defined as follows.  
\begin{enumerate}[1.]
\item The \textbf{lattice} of
$X$, denoted $\lat(X)$, is the subgroup of $\wl$ consisting of the $B$-weights of
$B$-eigenvectors in the field of rational functions $\C(X)$.
\item Let $\nu\colon \C(X)^{\times}\to\Q$ be a discrete valuation. Then $\nu$ induces an element of $\Hom_\ZZ(\lat(X),\Q)$, denoted $\rho_X(\nu)$, by
\[
\langle \rho_X(\nu),\gamma\rangle = \nu(f_\gamma)
\]
where $f_\gamma\in\C(X)$ is a $B$-eigenvector of $B$-weight
$\gamma\in\lat(X)$. If $D\subset X$ is a prime divisor, we denote by
$\nu_D$ the associated discrete valuation, and for simplicity by
$\rho_X(D)$ the element $\rho_X(\nu_D)$ of $\Hom_\Z(\lat(X),\Q)$.
\item A \textbf{color} of $X$ is a $B$-stable but not $G$-stable
  prime divisor of $X$. The set of colors of $X$ is denoted
  $\col(X)$. 
\item The \textbf{Cartan pairing} of $X$ is the bilinear map
\[
c_X\colon \Z\col(X) \times\lat(X) \to \Z
\]
given by extending by linearity the elements $\rho_X(D) \in \Hom_\Z(\lat(X),\Q)$ with
$D\in\col(X)$. In particular, for $D \in \col(X)$ and $\gamma \in \lat(X)$,
\[c_X(D,\gamma) = \<\rho_X(D), \gamma\>.\]

\item Let $P_X$ be the stabilizer of the open $B$-orbit of $X$ and denote by $S^p(X)$ the subset of simple
roots corresponding to $P_X$, which is a parabolic subgroup of $G$ containing $B$.

\item We use $\V(X)$ for the set of $G$-invariant $\Q$-valued
  discrete valuations of $\C(X)$ and identify $\V(X)$ with its image
  in $\Hom_\Z(\lat(X),\Q)$ via the map $\rho_{X}$. We call $\V(X)$ the
  \textbf{valuation cone} of $X$. 

\item By \cite{brion-gensymm}, $\V(X)$ is a co-simplicial cone. The set of \textbf{spherical roots} $\Sigma(X)$ of $X$ is the minimal set of primitive elements of $\lat(X)$ such that \label{item:6}
\[
\V(X) = \left\{ \eta\in \Hom_\Z(\lat(X),\Q) \;\middle\vert\; \langle\eta,\sigma\rangle \leq 0 \;\forall\sigma\in \Sigma(X) \right\}.
\]
\item A color $D$ of $X$ is \textbf{moved} by a simple root $\alpha\in S$ if $D$ is not stable under the minimal parabolic subgroup of $G$ containing $B$ and associated with the simple root $\alpha$. If $\alpha\in S\cap \Sigma(X)$ then we denote by $\A(X,\alpha)$ the set of colors of $X$ moved by $\alpha$, and we set
\[
\A(X) := \bigcup_{\alpha\in S\cap \Sigma(X)} \A(X,\alpha).
\]
\end{enumerate}
Note that all of these invariants are equivariantly birational: they only depend on $G/H$. We will also need two other sets of spherical roots associated to $X$, namely $\Sigma^{sc}(X)$ and $\Sigma^N(X)$. Like $\Sigma(X)$, they consist of normal vectors to  $\V(X)$, but possibly of other lengths. Before defining them, we recall Luna's notion of spherical closure~\cite[\S 6.1]{luna-typeA}. Recall that subgroup $H$ of $G$ is called a \textbf{spherical subgroup} if $G/H$ is a spherical $G$-variety. Then the quotient $N_G(H)/H$ naturally acts on $G/H$ by $G$-equivariant automorphisms, inducing an action of $N_G(H)$ on the set of colors of $G/H$.  The kernel of this last action is the \textbf{spherical closure} of $H$, denoted by $\overline H$. If $H=\overline H$, then $H$ is said to be \textbf{spherically closed}. In general, the normalizer of $\overline H$ may be bigger that the normalizer of $H$, but $\overline H$ is always spherically closed, that is
\begin{equation}
\overline{\overline{H}} = \overline{H}.
\end{equation}
This follows from \cite[Lemma~2.4.2]{bravi-luna-f4}; see \cite[Proposition~3.1]{pezzini-redaut} for a direct proof.

\begin{definition}
Let $X$ be a spherical $G$-variety with open orbit $G/H$. We define
\begin{align}
\Sigma^{sc}(X)&:=\Sigma^{sc}(G/H):= \Sigma(G/\overline{H})\\
\Sigma^{N}(X)&:=\Sigma^{N}(G/H):= \Sigma(G/N_G(H)). \label{eq:2}
\end{align}
The latter is called the set of \textbf{N-spherical roots} of $X$.
\end{definition}
It follows from~\cite[Theorem 1.3]{knop-auto} that when $X$ is quasi-affine, the set  $\Sigma^{N}(X)$ defined in equation~(\ref{eq:2}) is the basis of the saturation of $\mathscr M_X$. Thanks to \cite{losev-uniqueness}, the relation between $\Sigma(X)$, $\Sigma^{sc}(X)$ and $\Sigma^{N}(X)$ is well understood: the three sets have the same cardinality, and for every $\sigma \in \Sigma(X)$, either $\sigma$ or its double belongs to $\Sigma^{sc}(X)$; and similarly for $\Sigma^N(X)$. In the next proposition we precisely say which elements of $\Sigma(X)$ have to be doubled.

\begin{proposition}[Losev] \label{prop:doubling_sr}
Let $X$ be a spherical variety. Then $\Sigma^N(X)$ is obtained from $\Sigma(X)$ by replacing $\sigma$ with $2\sigma$ for all $\sigma$ satisfying any one of the following conditions:
\begin{enumerate}
\item\label{prop:doubling_sr:A} $\sigma\in\Sigma\cap \sr$ with $\rho_X(D_\sigma^+)=\rho_X(D_\sigma^-)$ where $\{D_\sigma^+,D_{\sigma}^-\}=\A(X,\sigma)$,
\item\label{prop:doubling_sr:B}  $\sigma = \alpha_1+\ldots+\alpha_n$, where $\{\alpha_1,\ldots,\alpha_n\}\subseteq S$ has type $\sB_n$ and $\alpha_i\in S^p$ for all $i\in\{2,\ldots,n\}$,
\item\label{prop:doubling_sr:G} $\sigma = 2\alpha_1+\alpha_2$, where $\{\alpha_1,\alpha_2\}\subseteq \sr$ has type $\sG_2$,
\item\label{prop:doubling_sr:root} $\sigma$ is not in the root lattice of $G$.
\end{enumerate}
The set $\Sigma^{sc}(X)$ is obtained from $\Sigma(X)$ by replacing $\sigma$ with $2\sigma$ for all $\sigma$ satisfying satisfying condition (\ref{prop:doubling_sr:B}), (\ref{prop:doubling_sr:G}), or (\ref{prop:doubling_sr:root}).
\end{proposition}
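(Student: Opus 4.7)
The plan is to factor the passage from $\Sigma(X)$ to $\Sigma^N(X)$ through the intermediate set $\Sigma^{sc}(X)$, corresponding to the chain of spherical subgroups $H \subseteq \overline{H} \subseteq N_G(H)$. By results of Losev in \cite{losev-uniqueness}, both quotients $\overline{H}/H$ and $N_G(H)/\overline{H}$ are elementary abelian $2$-groups, so each of the inclusions $\lat(G/N_G(H)) \subseteq \lat(G/\overline{H}) \subseteq \lat(G/H)$ has a cokernel killed by $2$. The three valuation cones are naturally identified as a single rational polyhedral cone in the common dual $\Q$-vector space, so the corresponding spherical roots ---the primitive normal vectors to this cone read in the respective lattices--- can only differ by factors of $2$: each $\sigma \in \Sigma(X)$ contributes either itself or $2\sigma$ to $\Sigma^{sc}(X)$, and similarly for $\Sigma^N(X)$.

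For the first step $\Sigma(X) \to \Sigma^{sc}(X)$ I will compare the Wasserman/Bravi--Pezzini list of possible spherical roots with Table~\ref{table:scspher}. A spherically closed spherical root $\tau$ arises as $2\sigma$ for some $\sigma \in \Sigma(X)$ exactly when $\tau/2$ is itself an admissible spherical root that fails to lie in $\lat(G/\overline{H})$. Running through Table~\ref{table:scspher}, this occurs only in the three cases isolated in conditions (2), (3) and (4): the $\sB_n$-case $\tau = 2(\alpha_1+\cdots+\alpha_n)$, whose doubled incarnation is precisely characterized by having all tail simple roots $\alpha_2,\ldots,\alpha_n$ in $S^p$; the $\sG_2$-case $\tau = 4\alpha_1 + 2\alpha_2$, corresponding to $\sigma = 2\alpha_1+\alpha_2$; and the case $\sigma \notin \rl$, whose double lies in $\rl$ and produces a legal spherically closed spherical root. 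Every other entry of Table~\ref{table:scspher} is already a spherical root of $X$ and contributes identically to $\Sigma(X)$ and $\Sigma^{sc}(X)$.

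For the second step $\Sigma^{sc}(X) \to \Sigma^N(X)$ I invoke Losev's description of $N_G(\overline{H})/\overline{H}$ in \cite{losev-uniqueness}: it is an elementary abelian $2$-group whose character lattice is canonically generated by the differences $\rho_X(D^+_\alpha) - \rho_X(D^-_\alpha)$ over those simple roots $\alpha \in \sr \cap \Sigma^{sc}(X)$ for which $|\A(X,\alpha)|=2$. Dually, $\lat(G/N_G(H))$ is the sublattice of $\lat(G/\overline{H})$ orthogonal to these differences. A primitive normal vector to the valuation cone is therefore doubled in the passage to $\Sigma^N(X)$ precisely when it is a simple root $\alpha$ with vanishing difference, i.e.\ with $\rho_X(D^+_\alpha) = \rho_X(D^-_\alpha)$, which is condition (1). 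A direct inspection of the spherical roots produced by the first step under (2), (3) or (4) shows that they are not simple, so they cannot additionally satisfy (1), and the two doubling operations compose to the stated criterion.

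The main obstacle is Losev's characterization of $N_G(\overline{H})/\overline{H}$ in the second step: it underpins the entire analysis and its proof in \cite{losev-uniqueness} rests on substantial uniqueness-theory machinery, which I propose to cite as a black box rather than reprove. Once that input is granted, the remainder of the argument reduces to a finite verification against Table~\ref{table:scspher} together with a mutual-exclusion check among the four doubling conditions.
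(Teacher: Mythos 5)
Your overall strategy (factor $H\subseteq\overline H\subseteq N_G(H)$ and treat the two doublings separately) is legitimate in principle, but both steps as written have real gaps. The less serious one first: the $\Sigma^N(X)$ half of the proposition is \emph{verbatim} \cite[Theorem 2]{losev-uniqueness}, which the paper simply cites; you instead try to rederive it from a ``description of $N_G(\overline H)/\overline H$'' that is not correctly stated and whose dual consequence does not follow. The differences $\rho_X(D_\alpha^+)-\rho_X(D_\alpha^-)$ live in $\Hom_\Z(\lat(X),\Q)$, while the character group of the finite $2$-group $N_G(\overline H)/\overline H$ is (a quotient of) $\lat(G/\overline H)$, so ``character lattice generated by the differences'' does not typecheck; and the inference ``$\lat(G/N_GH)$ is the sublattice orthogonal to the differences, hence a simple root with vanishing difference gets doubled'' is invalid: if $\rho_X(D_\alpha^+)=\rho_X(D_\alpha^-)$ the difference is zero and orthogonality to it is vacuous, so it cannot force $\alpha\notin\lat(G/N_GH)$; if the difference is nonzero, then any $\xi$ not orthogonal to it has $2\xi$ not orthogonal either, so an ``orthogonal sublattice'' can never account for doubling. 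The safe move is to quote Theorem 2 as a black box, which is exactly what the paper does.

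The genuinely missing idea is in your first step, i.e.\ in the $\Sigma^{sc}$ statement, which is where the actual content of the proposition lies. Comparing $\Sigma(X)$ with Table~\ref{table:scspher} and the compatibility conditions does settle cases (\ref{prop:doubling_sr:B}), (\ref{prop:doubling_sr:G}) and (\ref{prop:doubling_sr:root}) (in those cases the undoubled root is either not in the root lattice, not spherically closed, or incompatible with $S^p$), but it is silent in the one critical case: a simple root $\alpha\in\sr\cap\Sigma(X)$. Both $\alpha$ and $2\alpha$ appear in Table~\ref{table:scspher} and are compatible with exactly the same sets $S^p$, so ``running through the table'' cannot exclude that such an $\alpha$ is doubled in $\Sigma^{sc}(X)$ --- and ruling that out is precisely what distinguishes the $\Sigma^{sc}$ doubling list from the $\Sigma^N$ one (condition (\ref{prop:doubling_sr:A})). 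This is why the paper's self-contained argument does extra geometric work: starting from Losev's Theorem 2 it realizes $\Z\Sigma'$ as $\lat(G/K)$ for an intermediate subgroup $H\subseteq K\subseteq N_G(H)$, shows $K\subseteq\overline H$ by a count of colors (\cite[\S 2.3]{luna-typeA}), and then kills the doubling of a simple root because $G/H$ and $G/\overline H$ would have respectively $2$ and $1$ colors moved by $\alpha$, contradicting the bijection of colors under spherical closure. Some argument of this nature (or Luna's Lemme 7.1, which invokes the classification of spherical homogeneous spaces) is indispensable; your sentence ``every other entry of Table~\ref{table:scspher} contributes identically to $\Sigma(X)$ and $\Sigma^{sc}(X)$'' is exactly the claim that still needs proof for simple roots, and your step also tacitly uses $S^p(G/\overline H)=S^p(G/H)$ and $\Sigma^{sc}(X)\subseteq\Sigma(X)\cup2\Sigma(X)$, which should be justified (the paper gets the latter from \cite[Theorem 4.4 and proof of Theorem 6.1]{knop-lv} together with the classification of spherical roots).
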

\begin{proof}
The statement about $\Sigma^N(X)$ is exactly \cite[Theorem 2]{losev-uniqueness}, and the statement about $\Sigma^{sc}(X)$ is exactly \cite[Lemme 7.1]{luna-typeA}.

For the second statement, we point out that the proof of \loccit\ uses the general classification of spherical homogeneous spaces. A similar but more self-contained argument, essentially relying only on \cite[Theorem 2]{losev-uniqueness}, goes as follows.

Let $G/H$ be the open orbit of $X$ and let $\Sigma'$ be equal to $\Sigma(X)$ where we replace $\sigma$ with $2\sigma$ for all $\sigma$ satisfying conditions (\ref{prop:doubling_sr:B}), (\ref{prop:doubling_sr:G}), and (\ref{prop:doubling_sr:root}). By \cite[Theorem 2]{losev-uniqueness} we have $\Sigma'\subseteq \Sigma^N(X)$, and we have
\[
\frac{\ZZ\Sigma'}{\ZZ\Sigma^N(X)}\subseteq \frac{\Lambda(X)}{\ZZ\Sigma^N(X)}=\frac{\Lambda(X)}{\Lambda(G/N_GH)} \cong \frac{N_GH}{H}
\]
where the equality is \cite[Corollary 6.5]{knop-auto} and the isomorphism is \cite[Lemma 2.4]{gandini-spherorbclos}. Then $\ZZ\Sigma'$ corresponds to a subgroup $K$ of $N_GH$ containing $H$, such that $\Lambda(G/K)=\ZZ\Sigma'$. From \cite[Theorem 4.4 and proof of Theorem 6.1]{knop-lv} we deduce that $\Sigma(G/K)=\Sigma'$.

According to \cite[Section 2.3]{luna-typeA} the number of colors of $G/H$ and of $G/K$ are equal. In other words the natural map $G/H\to G/K$ induces a bijection between the sets of colors of $G/H$ and $G/K$, whence $K\subseteq \overline H$. Applying once again \cite[Theorem 4.4 and proof of Theorem 6.1]{knop-lv} we have that $\Sigma'$ and $\Sigma^{sc}(X)$ are equal up to replacing some elements of the first set with positive rational multiples, and thanks to the classification of spherical roots we have that the coefficients can only be $1$ or $2$, i.e.\ in particular $\Sigma^{sc}(X)\subseteq \Sigma'\cup 2\Sigma'$.

Assume that there exists an element in $\Sigma^{sc}(X)$ not in $\Sigma'$. It has the form $2\sigma$ with $\sigma\in \Sigma'$. Then $\sigma\in \Sigma(X)$ and $2\sigma\in \Sigma^N(X)$, and by definition of $\Sigma'$ the only possibility is that $\sigma$ satisfies condition (\ref{prop:doubling_sr:A}), i.e.\ $\sigma\in S\cap \Sigma(G/H)$. But in this case $G/H$ and $G/\overline H$ would have a different number of colors moved by $\sigma$ (resp.\ $2$ and $1$), which is impossible.

Therefore $\Sigma^{sc}(X)\subseteq \Sigma'$, and since the two sets have the same finite cardinality, they are equal.
\end{proof}

The notation $\Sigma^{sc}(G)$ of Definition~\ref{def:scspherrootsG} is justified by the following.
\begin{proposition}[see {\cite[\S 1.2]{luna-typeA}}] \label{prop:Sigma_sc_G}
An element $\sigma$ of $\N\sr$ belongs to $\Sigma^{sc}(G)$ if and only if there exists a spherically closed spherical subgroup $K$ of $G$ such that
$\Sigma(G/K) = \{\sigma\}$. 
\end{proposition}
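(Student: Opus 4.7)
The plan is to deduce both implications from the classification of rank-one spherically closed spherical homogeneous $G$-varieties, together with Proposition~\ref{prop:doubling_sr}. The statement is essentially a repackaging of Luna's tables in \cite[\S 1.2]{luna-typeA}; I would present it as a matching of Table~\ref{table:scspher} with the known list.

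For the ``if'' direction, suppose $K$ is a spherically closed spherical subgroup of $G$ with $\Sigma(G/K)=\{\sigma\}$. Since $K=\overline{K}$, we have $\Sigma^{sc}(G/K)=\Sigma(G/\overline{K})=\{\sigma\}$, so $\sigma$ is an element of $\N\sr$ that occurs as the (unique) spherical root of the spherically closed variety $G/K$. I would now invoke the classical finite list of possible spherical roots of arbitrary spherical $G$-varieties (Akhiezer--Brion--Wasserman), which is finite and computable from the Dynkin diagram of $G$, and then apply Proposition~\ref{prop:doubling_sr} to track which of those elements can actually appear in $\Sigma^{sc}(X)$ of some $X$. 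Going through the cases produces exactly Table~\ref{table:scspher}: e.g.\ a support of type $\sB_n$ with $\{\alpha_2,\ldots,\alpha_n\}\subseteq S^p$ gets doubled by condition~(\ref{prop:doubling_sr:B}), contributing the $2(\alpha_1+\cdots+\alpha_n)$ entry; the type $\sG_2$ root $2\alpha_1+\alpha_2$ is doubled by condition~(\ref{prop:doubling_sr:G}), contributing $4\alpha_1+2\alpha_2$; a type $\sA_1$ root $\alpha$ with identified colors is doubled by~(\ref{prop:doubling_sr:A}), contributing $2\alpha$; and so on.

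For the ``only if'' direction, I would exhibit for each row $\sigma$ of Table~\ref{table:scspher} an explicit spherically closed rank-one spherical subgroup $K'$ of the reductive group $G'$ whose set of simple roots is $\supp(\sigma)$, with $\Sigma(G'/K')=\{\sigma\}$. These are the well-known models underlying the classification of rank-one wonderful varieties: the symmetric subgroups giving the $\sA_n$, $\sC_n$, and $\sD_n$ rows, together with the exceptional cases for $\sB_3$, $\sF_4$, $\sG_2$ listed in \cite[Tables 1--4]{luna-typeA} (or in Wasserman's thesis); each model can be checked to coincide with its spherical closure, either directly or by a second application of Proposition~\ref{prop:doubling_sr}. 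To obtain a subgroup of the full $G$, define $K\subseteq G$ to equal $K'$ on the factors of $G$ meeting $\supp(\sigma)$ and to contain the remaining simple factors; this $K$ is again spherically closed and satisfies $\Sigma(G/K)=\{\sigma\}$, since discarding inert simple factors does not change the valuation cone equations.

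The main obstacle is not conceptual but one of bookkeeping: verifying case by case that every entry of Table~\ref{table:scspher} is realized by a spherically closed example, and that no other element of $\N\sr$ survives the doubling analysis of Proposition~\ref{prop:doubling_sr}. Since the classification of rank-one spherical homogeneous varieties is complete and the doubling conditions are fully explicit, this is a finite, routine verification which is essentially carried out in \cite[\S 1.2]{luna-typeA}; I would for brevity cite that source rather than reproduce all thirteen case checks.
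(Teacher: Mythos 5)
The paper offers no proof of Proposition~\ref{prop:Sigma_sc_G} of its own: the statement is given with a pointer to \cite[\S 1.2]{luna-typeA}, so your overall plan --- reduce to the classification of rank-one spherical varieties, use Proposition~\ref{prop:doubling_sr} to see which elements survive as spherical roots of spherically closed subgroups, and defer the thirteen case checks of Table~\ref{table:scspher} to Luna --- is in the same spirit as the paper's citation, and the ``if'' half of your sketch is sound.

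There is, however, one step in your ``only if'' direction that fails as written. After producing a spherically closed rank-one subgroup $K'$ of the group $G'$ whose simple roots are $\supp(\sigma)$, you pass to $G$ by letting $K$ ``equal $K'$ on the factors of $G$ meeting $\supp(\sigma)$ and contain the remaining simple factors''. This only makes sense when $\supp(\sigma)$ is a union of connected components of the Dynkin diagram of $G$; in the typical case $\supp(\sigma)$ is a proper subset of the simple roots of a single simple factor (e.g.\ $\sigma=\alpha_1+\alpha_2+\alpha_3$ inside $\SL(5)$), and there is no subgroup of $G$ that is ``equal to $K'$'' on that factor. The standard repair is parabolic induction: take the standard parabolic $P$ whose Levi $L$ contains $\supp(\sigma)$ among its simple roots, choose inside $L$ a spherically closed spherical subgroup with spherical root $\sigma$ (the rank-one model enlarged by the connected centre of $L$), and let $K$ be its preimage under $P\to L$. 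One must then still argue that this induced subgroup is spherically closed in $G$ --- equivalently, that forming the spherical closure inside $G$ does not double $\sigma$ or alter the set of colors --- which is not automatic from your construction; it is precisely what the compatibility condition (S) of Definition~\ref{def:spherical-systems} and its combinatorial form in Remark~\ref{rem:spherical-systems} encode, and it is part of the verification carried out in \cite[\S 1.2]{luna-typeA}. With that step corrected and justified, your outline does match the cited argument.
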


Given a spherical $G$-variety $X$, the triple
\[\s(X) = (S^p(X), \Sigma^{sc}(X), \A(X)),\]
equipped with the restriction of the Cartan pairing $c_X$ to $\Z\A(X) \times \Z\Sigma^{sc}(X)$, is a spherically closed spherical system in the following sense.

\begin{definition} \label{def:spherical-systems}
Let $(S^p,\Sigma,\mathbf A)$ be a triple where $S^p$ is a  subset of $\sr$,
$\Sigma$ is a subset of $\Sigma^{sc}(G)$
and $\mathbf A$ is a finite set endowed with a $\Z$-bilinear pairing $c\colon \Z\mathbf A\times\Z\Sigma\to\Z$. 
For every $\alpha \in \Sigma \cap S$, let $\mathbf A (\alpha)$ denote the set $\{D \in \mathbf A : c(D,\alpha)=1 \}$. 
Such a triple is called a \textbf{spherically closed spherical $G$-system} if all the
following axioms hold: 
\begin{itemize} 
\item[(A1)] for every $D \in \mathbf A$ and every $\sigma \in \Sigma$,
  we have that $c(D,\sigma)\leq 1$ and that
  if $c(D, \sigma)=1$ then $\sigma \in S$; 
\item[(A2)] for every $\alpha \in \Sigma \cap S$, $\mathbf A(\alpha)$
  contains exactly two elements, which we denote by $D_\alpha^+$ and
  $D_\alpha^-$, and for all $\sigma \in \Sigma$ we have $c(D_\alpha^+,\sigma) + c(D_\alpha^-,\sigma) = \langle \alpha^\vee , \sigma \rangle$; 
\item[(A3)] the set $\mathbf A$ is the union of $\mathbf A(\alpha)$ for all $\alpha\in\Sigma \cap S$; 
\item[($\Sigma 1$)] if $2\alpha \in \Sigma \cap 2S$ then $\frac{1}{2}\langle\alpha^\vee, \sigma \rangle$ is a non-positive integer for all $\sigma \in \Sigma \setminus \{ 2\alpha \}$; 
\item[($\Sigma 2$)] if $\alpha, \beta \in S$ are orthogonal and $\alpha + \beta$ belongs to $\Sigma$ then $\langle \alpha ^\vee , \sigma \rangle = \langle \beta ^\vee , \sigma \rangle$ for all $\sigma \in \Sigma$; 
\item[(S)] every $\sigma \in \Sigma$ is \textbf{compatible} with
  $S^p$, that is, for every $\sigma \in \Sigma$ there exists a
  spherically closed spherical subgroup $K$ of $G$ with
  $S^p(G/K)=S^p$ and $\Sigma(G/K)=\{\sigma\}$.
\end{itemize}
\end{definition}

\begin{remark}\label{rem:spherical-systems} 
\begin{enumerate}[1.]
\item Condition (S) of Definition~\ref{def:spherical-systems} can be stated
in purely combinatorial terms as follows (see \cite[\S 1.1.6]{bravi-luna-f4}).
A spherically closed spherical root $\sigma$ is compatible with $S^p$ if and only if:
\begin{itemize}
\item[-] in case $\sigma=\alpha_1+\ldots+\alpha_n$ with support of type $\sB_n$
\[\{\alpha\in\supp(\sigma)\colon\<\alpha^\vee,\sigma\>=0\}\setminus\{\alpha_n\}
\subseteq S^p\subseteq\{\alpha\in S\colon\<\alpha^\vee,\sigma\>=0\}\setminus\{\alpha_n\},\]
\item[-] in case $\sigma=\alpha_1+2(\alpha_2+\ldots+\alpha_{n-1})+\alpha_n$ with support of type $\sC_n$
\[\{\alpha\in\supp(\sigma)\colon\<\alpha^\vee,\sigma\>=0\}\setminus\{\alpha_1\}
\subseteq S^p\subseteq\{\alpha\in S\colon\<\alpha^\vee,\sigma\>=0\},\] 
\item[-] in the other cases
\[\{\alpha\in\supp(\sigma)\colon\<\alpha^\vee,\sigma\>=0\}
\subseteq S^p\subseteq\{\alpha\in S\colon\<\alpha^\vee,\sigma\>=0\}.\]
\end{itemize} 
\item Definition~\ref{def:spherical-systems} is the same as \cite[Definition 2.5]{msfwm}. It combines the standard
  definition of spherical system, see \cite[\S 2]{luna-typeA}, with the
  requirement that it be spherically closed, see \cite[\S 7.1]{luna-typeA}
  and \cite[\S 2.4]{bravi-luna-f4}.
\end{enumerate}
\end{remark}

\subsection{Spherical roots of a generic affine spherical variety with weight monoid $\wm$.} \label{subsec:spher_roots_generic_wiht_wm}
In this section, we recall from \cite{msfwm} the definition of spherical roots that are `adapted' to a given normal submonoid $\wm$ of $\dw$. We deduce that the generic affine spherical varieties $X$ with weight monoid $\wm$ are those for which $\Sigma^{sc}(X)$ is a maximal set of spherical roots adapted to $\wm$; see Corollary~\ref{cor:genmaxadap}.

\begin{definition} \label{def:adapted}
Let $\wm$ be a normal submonoid of $\dw$. We say that a subset $\Sigma$ of $\Sigma^{sc}(G)$ is \textbf{adapted to $\wm$} if there exists an affine spherical variety $X$ such that $\wm(X)=\wm$ and $\Sigma^{sc}(X)=\Sigma$. We say that an element $\sigma$ of $\Sigma^{sc}(G)$ is adapted to $\wm$ 
if $\{\sigma\}$ is adapted to $\wm$. We use $\Sigma^{sc}(\wm)$ for the set of all $\sigma \in \Sigma^{sc}(G)$ that are adapted to $\wm$.  
\end{definition}
\begin{remark}
In general, $\Sigma^{sc}(\wm)$ is not adapted to $\wm$. The following example is due to Luna: when $G = \SL(2) \times \SL(2)$ and $\wm = \N\{2\omega, 4\omega+2\omega'\}$, then $\Sigma^{sc}(\wm) = \{\alpha, 2\alpha'\}$ and one checks that this set is not adapted to $\wm$. 
\end{remark}

\begin{proposition} \label{prop:orbincl} 
Let $X$ be an affine spherical $G$-variety with weight monoid $\wm$. 
\begin{enumerate}[(a)]
\item If $Y$ is also an affine spherical $G$-variety with weight monoid $\wm$, then $T\cdot[Y] \inn \overline{T\cdot[X]}$ as subsets of $\ms$ if and only if $\Sigma^{sc}(Y) \inn \Sigma^{sc}(X)$. \label{orbincl_item1}
\item For every subset $\Sigma'$ of $\Sigma^{sc}(X)$ there exists an affine $G$-spherical variety $Y'$ with weight monoid $\wm$ and $\Sigma^{sc}(Y') = \Sigma'$.  \label{orbincl_item2}
\item $\Sigma^{sc}(X) \inn \Sigma^{sc}(\wm)$. \label{orbincl_item3}
\end{enumerate}
\end{proposition}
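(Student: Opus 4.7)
The proof hinges on Proposition~\ref{prop:monTorbclos}, which identifies $\overline{T\cdot[X]}$ with the affine toric variety $\Spec\C[\mathscr M_X]$. My plan is to first establish the $\Sigma^N$-analogue of (a), then translate to $\Sigma^{sc}$, and finally deduce (b) and (c). Knop's theorem (cited just before Proposition~\ref{prop:monTorbclos}) asserts that the saturation of $\mathscr M_X$ is the free monoid $\N\Sigma^N(X)$, so the cone $\Q_{\ge 0}\mathscr M_X$ is simplicial with primitive ray generators exactly the elements of $\Sigma^N(X)$. The classification of $T$-orbits of an affine toric variety by faces of its cone then puts the $T$-orbits of $\Spec\C[\mathscr M_X]$ in bijection with subsets $\Sigma'\inn\Sigma^N(X)$: the orbit corresponding to $\Sigma'$ has closure $\Spec\C[\mathscr M_X\cap\Q_{\ge 0}\Sigma']$. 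Combining this with the Alexeev--Brion bijection \cite[Theorem~1.12]{alexeev&brion-modaff}, each such orbit equals $T\cdot[Y_{\Sigma'}]$ for a unique affine spherical variety $Y_{\Sigma'}$ with weight monoid $\wm$; applying Proposition~\ref{prop:monTorbclos} to $Y_{\Sigma'}$ gives $\mathscr M_{Y_{\Sigma'}}=\mathscr M_X\cap\Q_{\ge 0}\Sigma'$, hence $\Sigma^N(Y_{\Sigma'})=\Sigma'$. This will establish $T\cdot[Y]\inn\overline{T\cdot[X]}$ iff $\Sigma^N(Y)\inn\Sigma^N(X)$.

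I then pass from $\Sigma^N$ to $\Sigma^{sc}$ via Proposition~\ref{prop:doubling_sr}: each element of $\Sigma^N$ (resp.\ $\Sigma^{sc}$) equals either some $\sigma\in\Sigma$ or $2\sigma$, with doubling governed by explicit conditions. Conditions (2), (3), (4) depend only on the root and on $S^p=S^p(\wm)$, which is intrinsic to $\wm$. Since $\lat(X)=\Z\wm$ for any affine spherical variety with weight monoid $\wm$, the spherical roots of $X$ and $Y$ are primitive in a common lattice; hence for any $\tau\in\Sigma^N(Y)\cap\Sigma^N(X)$ the corresponding primitive elements $\sigma_X\in\Sigma(X)$ and $\sigma_Y\in\Sigma(Y)$ (each either $\tau$ or $\tau/2$) must coincide by primitivity. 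This will carry the inclusion of $\Sigma^N$-sets through the doubling bijection, giving $\Sigma^N(Y)\inn\Sigma^N(X)\iff\Sigma^{sc}(Y)\inn\Sigma^{sc}(X)$ and completing (a). Part (b) then follows immediately: given $\Sigma'\inn\Sigma^{sc}(X)$, the doubling bijection produces $\widetilde\Sigma'\inn\Sigma^N(X)$, and I take $Y':=Y_{\widetilde\Sigma'}$ from the toric analysis. Part (c) follows from (b) applied to each singleton $\{\sigma\}\inn\Sigma^{sc}(X)$: the resulting variety $Y$ with $\Sigma^{sc}(Y)=\{\sigma\}$ witnesses $\sigma\in\Sigma^{sc}(\wm)$ per Definition~\ref{def:adapted}.

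The main obstacle is the converse direction of the $\Sigma^N\leftrightarrow\Sigma^{sc}$ equivalence: condition~(1) of Proposition~\ref{prop:doubling_sr}, which triggers doubling precisely when the two colors $D^\pm_\sigma$ moved by a simple spherical root $\sigma$ have equal images under $\rho_X$, depends on the colors of the specific variety and may \emph{a priori} differ between $X$ and its degenerations. I will need to verify that no such discrepancy arises among the varieties $Y_{\Sigma'}$ produced by the toric analysis, so that inclusions of $\Sigma^{sc}$-sets do indeed correspond to inclusions of $\Sigma^N$-sets; this parallels the argument of~\cite[Corollary~2.8(b)]{avdeev&cupit-irrcomps-arxivv2} (cf.\ Remark~\ref{rem:acf}(\ref{acf_comp})).
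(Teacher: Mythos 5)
Your overall strategy (degenerate $X$ along the faces of the cone of its root monoid, using Proposition~\ref{prop:monTorbclos} and \cite[Theorem 1.3]{knop-auto}) is the same as the paper's, but your decision to route everything through \emph{exact} statements about $\Sigma^N$ leaves a genuine gap, which you acknowledge but do not close. First, from $\mathscr M_{Y_{\Sigma'}}=\mathscr M_X\cap\Q_{\ge0}\Sigma'$ and Knop's theorem you may only conclude that $\Sigma^N(Y_{\Sigma'})$ consists of positive rational multiples of the elements of $\Sigma'$, not that it equals $\Sigma'$; and your asserted equivalence $\Sigma^N(Y)\inn\Sigma^N(X)\Leftrightarrow\Sigma^{sc}(Y)\inn\Sigma^{sc}(X)$ is exactly threatened by condition~(\ref{prop:doubling_sr:A}) of Proposition~\ref{prop:doubling_sr}, which depends on the values of the colors of the individual variety and could a priori double a simple spherical root in one degeneration but not in another. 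This verification is not a technicality to be deferred: it is the crux of the normalization issue, and without it neither (a) nor (b) is proved. The paper sidesteps it entirely: it never pins down $\Sigma^N$ beyond ``up to multiples,'' converts the face condition into $\Sigma(Y)\inn\Sigma(X)$ using only that spherical roots are primitive in the common lattice $\lat(X)=\lat(Y)=\Z\wm$, and then passes to $\Sigma^{sc}$, whose doubling rules --- conditions (\ref{prop:doubling_sr:B}), (\ref{prop:doubling_sr:G}), (\ref{prop:doubling_sr:root}) only --- depend just on $\sigma$, on $S^p$ (equal to $S^p(\wm)$ for every variety with weight monoid $\wm$) and on the root lattice, hence are identical for $X$ and $Y$. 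If you insist on your route, the missing ingredient is Lemma~\ref{lem:a_alpha}: the functionals of the two colors moved by a simple spherical root form the set $a(\sigma)$, which is determined by $\wm$ alone, so condition~(\ref{prop:doubling_sr:A}) is in fact variety-independent; but this must be said and used, not merely hoped for.

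A second omission concerns the ``if'' direction of (a). Your toric analysis produces \emph{some} variety $Y_{\Sigma'}$ with the prescribed invariants whose $T$-orbit lies in $\overline{T\cdot[X]}$; to conclude anything about the \emph{given} variety $Y$ you must know that an affine spherical variety with weight monoid $\wm$ and prescribed spherically closed spherical roots is unique up to $G$-isomorphism. The Alexeev--Brion correspondence \cite[Theorem 1.12]{alexeev&brion-modaff} only matches $T$-orbits with isomorphism classes; the identification $Y\cong Y_{\Sigma'}$ requires Losev's uniqueness theorem (\cite[Theorem 1.2]{losev-knopconj}), which the paper invokes at precisely this point and which is absent from your argument.
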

\begin{proof}
We begin with assertion (\ref{orbincl_item1}). As recalled in Proposition~\ref{prop:monTorbclos}, Alexeev and Brion showed that $\overline{T\cdot[X]} = \Spec \C[\mathscr{M}_X]$. A basic fact in the theory of (not necessarily normal) affine toric varieties is that we have the following inclusion preserving one-to-one correspondence \cite[Theorem 3.A.3]{cox_little_schenck-toricbook}:
\begin{align} \label{eq:faceorbcorr}
\{\text{faces of $\Q_{\ge 0} \mathscr{M}_X$}\} &\to \{\text{orbit closures in $\overline{T\cdot[X]}$}\}\\
\mathcal{F} &\mapsto \Spec \C[\mathscr{M}_X \cap \mathcal{F}]
\end{align} 
Consequently, if $T\cdot [Y] \inn \overline{T\cdot[X]}$, or equivalently,  if $\overline{T\cdot[Y]} \inn  \overline{T\cdot[X]}$ then 
\begin{equation} \label{eq:rootmonintface}
\mathscr{M}_Y = \mathscr{M}_X \cap \mathcal{F} \text{ for some face $\mathcal{F}$ of  $\Q_{\ge 0} \mathscr{M}_X$}.
\end{equation}
By \cite[Theorem 1.3]{knop-auto}, the equality \eqref{eq:rootmonintface} holds if and only if $\Sigma^N(Y) \inn \Sigma^N(X)$ up to multiples. Recall that the elements of $\Sigma^N(X)$ are integer multiples of the elements of $\Sigma(X)$, and that the elements of $\Sigma(X)$ are primitive in the lattice $\lat(X)$. Since $\lat(X) = \lat(Y) = \Z\wm$, it follows that \eqref{eq:rootmonintface} is equivalent to $\Sigma(Y) \inn \Sigma(X)$. By Proposition~\ref{prop:doubling_sr}, the latter inclusion holds if and only if $\Sigma^{sc}(Y) \inn \Sigma^{sc}(X)$. This proves the ``only if'' statement of assertion (\ref{orbincl_item1}).   
We turn to the converse. It follows from assertion (\ref{orbincl_item2}) that if $\Sigma^{sc}(Y) \inn \Sigma^{sc}(X)$ then there exists an affine spherical $G$-variety $\widetilde{Y}$ with weight monoid $\wm$, and $\Sigma^{sc}(\widetilde{Y}) = \Sigma^{sc}(Y)$. It follows from \cite[Theorem 1.2]{losev-knopconj} that $\widetilde{Y}$ is $G$-isomorphic to $Y$. An application of \cite[Theorem 1.12]{alexeev&brion-modaff} finishes the proof of assertion (\ref{orbincl_item1}). 

Assertion (\ref{orbincl_item2}) is a formal consequence of the correspondence \eqref{eq:faceorbcorr} and \cite[Theorem 1.3]{knop-auto}. Indeed, $\Q_{\ge 0} \Sigma'$ is a face of $\Q_{\ge 0} \Sigma^{sc}(X)=\Q_{\ge 0} \mathscr{M}_X$ and so corresponds to a $T$-orbit closure $\overline{T\cdot[Y']}$ in $\overline{T\cdot[X]}$. Applying \cite[Theorem 1.3]{knop-auto} as above, it follows that $\Sigma^{sc}(Y') = \Sigma'$. 

Finally, assertion (\ref{orbincl_item3}) follows by applying (\ref{orbincl_item2}) to the singletons in $\Sigma^{sc}(X)$. 
\end{proof}

\begin{corollary} \label{cor:genmaxadap}
If $X$ is an affine spherical $G$-variety with weight monoid $\wm$, then $X$ is generic if and only if there is no subset of $\Sigma^{sc}(G)$ that strictly contains $\Sigma^{sc}(X)$ and is adapted to $\wm$.
\end{corollary}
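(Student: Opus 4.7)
The plan is to translate the geometric condition ``$T\cdot[X]$ is open in $\ms$'' into the combinatorial condition about $\Sigma^{sc}$-sets by means of Propositions~\ref{prop:denseTorb_irrcomp} and \ref{prop:orbincl}. The argument is essentially bookkeeping once those two inputs are in place.

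First, I would observe that because $\ms$ contains only finitely many $T$-orbits, the orbit $T\cdot[X]$ is open in $\ms$ if and only if $\overline{T\cdot[X]}$ is an irreducible component of $\ms$. Indeed, by Proposition~\ref{prop:denseTorb_irrcomp}, each irreducible component $Z$ of $\ms$ has a unique dense $T$-orbit, which is open in $Z$; hence its complement in $Z$ is a finite union of lower-dimensional orbit closures. So if $\overline{T\cdot[X]}$ is an irreducible component, then $T\cdot[X]$ is the open $T$-orbit in that component and it is disjoint from the other components (for otherwise $\overline{T\cdot[X]}$ would be contained in another component, contradicting maximality), hence open in $\ms$. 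Conversely, if $T\cdot[X]$ is open in $\ms$, its closure is an irreducible closed subset of maximal dimension, hence an irreducible component.

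Next, I would translate the latter condition via Proposition~\ref{prop:orbincl}(\ref{orbincl_item1}): $\overline{T\cdot[X]}$ is an irreducible component of $\ms$ if and only if there is no $T$-orbit $T\cdot[Y]$ on $\ms$ with $\overline{T\cdot[X]} \subsetneq \overline{T\cdot[Y]}$, which by that proposition is equivalent to the nonexistence of an affine spherical $G$-variety $Y$ with $\wm(Y)=\wm$ and $\Sigma^{sc}(X)\subsetneq \Sigma^{sc}(Y)$.

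Finally, I would unwind Definition~\ref{def:adapted}: a subset $\Sigma'\subset\Sigma^{sc}(G)$ is adapted to $\wm$ precisely when $\Sigma'=\Sigma^{sc}(Y)$ for some affine spherical $G$-variety $Y$ with weight monoid $\wm$. Hence the nonexistence of such a $Y$ with $\Sigma^{sc}(X)\subsetneq \Sigma^{sc}(Y)$ is the same as the nonexistence of a subset of $\Sigma^{sc}(G)$ strictly containing $\Sigma^{sc}(X)$ and adapted to $\wm$, completing the proof. I do not expect any real obstacle here, since all the substantial work has been done in Proposition~\ref{prop:orbincl}; the only point requiring some care is the first paragraph's passage from ``open orbit'' to ``closure is an irreducible component'', which uses the finiteness of the $T$-orbit set on $\ms$.
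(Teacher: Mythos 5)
Your proof is correct and follows essentially the same route as the paper's: genericity is reduced, via Proposition~\ref{prop:denseTorb_irrcomp} and the Alexeev--Brion bijection between $T$-orbits on $\ms$ and isomorphism classes of affine spherical varieties with weight monoid $\wm$, to maximality of $\overline{T\cdot[X]}$ among $T$-orbit closures, which Proposition~\ref{prop:orbincl}(\ref{orbincl_item1}) translates into maximality of $\Sigma^{sc}(X)$ among the sets $\Sigma^{sc}(Y)$, i.e.\ among subsets adapted to $\wm$ by Definition~\ref{def:adapted}. The only cosmetic slip is the phrase ``irreducible closed subset of maximal dimension'' in your first paragraph: the correct reason that $\overline{T\cdot[X]}$ is a component when $T\cdot[X]$ is open is that a nonempty open subset of $\ms$ is dense in any irreducible component it meets (and the components are $T$-stable since $T$ is connected), not a dimension comparison.
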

\begin{proof}
This is a formal consequence of Propositions \ref{prop:orbincl} and \ref{prop:denseTorb_irrcomp} above and \cite[Theorem 1.12]{alexeev&brion-modaff}, as we now explain. 
We first assume that $X$ is generic, and show that $\Sigma^{sc}(X)$ is a maximal subset of $\Sigma^{sc}(G)$ that is adapted to $\wm$. Let $\Sigma'$ be a subset of $\Sigma^{sc}(G)$ that is adapted to $\wm$ and such that $\Sigma^{sc}(X) \inn \Sigma'$. By Definition~\ref{def:adapted}, there exits an affine variety $Y$ with weight monoid $\wm$ such that $\Sigma^{sc}(Y)=\Sigma'$. Proposition~\ref{prop:orbincl} now implies that $T\cdot[X] \inn \overline{T\cdot[Y]}$. Since $X$ is generic, which means that $\overline{T\cdot[X]}$ is an irreducible component of $\ms$, this implies that $\overline{T\cdot[X]} = \overline{T\cdot[Y]}$. Consequently $T\cdot[X] = T\cdot[Y]$, and so $X$ is $G$-equivariantly isomorphic to $Y$ by \cite[Theorem 1.12]{alexeev&brion-modaff}. In particular, $\Sigma^{sc}(X) = \Sigma^{sc}(Y) = \Sigma'$. 

Conversely, suppose that $\Sigma^{sc}(X)$ is a maximal subset of $\Sigma^{sc}(G)$ that is adapted to $\wm$. If $X$ were not generic, then Proposition~\ref{prop:denseTorb_irrcomp} would imply the  existence of an affine spherical $G$-variety $Y$ with weight monoid $\wm$ such that $\overline{T\cdot[X]} \subsetneq \overline{T\cdot[Y]}$. But then $\Sigma^{sc}(X) \subset \Sigma^{sc}(Y)$ by Proposition~\ref{prop:orbincl}, and $\Sigma^{sc}(X) \neq \Sigma^{sc}(Y)$, since otherwise we would have $T \cdot [Y] \inn \overline{T\cdot[X]}$ again by Proposition~\ref{prop:orbincl}. This contradicts the maximality of $\Sigma^{sc}(X)$, and finishes the proof.  
\end{proof}

\begin{corollary} \label{cor:msirreducible}
$\ms$ is irreducible if and only if $\Sigma^{sc}(\wm)$ is adapted to $\wm$. 
\end{corollary}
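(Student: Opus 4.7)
The plan is to translate the irreducibility of $\ms$ into a statement about maximal adapted subsets, via the $T$-orbit decomposition, and then match this with the hypothesis on $\Sigma^{sc}(\wm)$. Throughout, I would use the bijection of Proposition~\ref{prop:denseTorb_irrcomp} between irreducible components of $\ms$ and generic affine spherical $G$-varieties $X$ with weight monoid $\wm$, combined with Corollary~\ref{cor:genmaxadap}, which identifies such generic $X$ with those whose set $\Sigma^{sc}(X)$ is maximal among subsets of $\Sigma^{sc}(G)$ adapted to $\wm$. Thus, $\ms$ is irreducible if and only if there is a \emph{unique} maximal adapted subset of $\Sigma^{sc}(G)$.

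For the ``if'' direction, I would argue as follows. Suppose $\Sigma^{sc}(\wm)$ itself is adapted to $\wm$. By Proposition~\ref{prop:orbincl}(\ref{orbincl_item3}) every adapted subset is contained in $\Sigma^{sc}(\wm)$. Consequently $\Sigma^{sc}(\wm)$ is trivially the unique maximum, so there is a unique generic variety, and $\ms$ is irreducible.

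For the ``only if'' direction, assume $\ms$ is irreducible with unique generic point $T\cdot[X]$, and set $M := \Sigma^{sc}(X)$. I would show $M=\Sigma^{sc}(\wm)$. The inclusion $M \subset \Sigma^{sc}(\wm)$ is Proposition~\ref{prop:orbincl}(\ref{orbincl_item3}). For the reverse inclusion, pick any $\sigma \in \Sigma^{sc}(\wm)$; by Definition~\ref{def:adapted} there exists an affine spherical variety $X_\sigma$ with $\wm(X_\sigma)=\wm$ and $\sigma \in \Sigma^{sc}(X_\sigma)$. Since $\ms$ is irreducible, $T\cdot[X_\sigma] \subset \ms = \overline{T\cdot[X]}$, and Proposition~\ref{prop:orbincl}(\ref{orbincl_item1}) then yields $\Sigma^{sc}(X_\sigma) \subset M$, so $\sigma \in M$. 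Therefore $\Sigma^{sc}(\wm) = M = \Sigma^{sc}(X)$ is adapted to $\wm$.

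Since both directions reduce to careful bookkeeping with the already-established correspondence between $T$-orbits on $\ms$, root monoids, and spherical roots, no genuine obstacle remains; the only subtle point is making sure to invoke Proposition~\ref{prop:orbincl}(\ref{orbincl_item3}) to see that \emph{every} adapted subset lies in $\Sigma^{sc}(\wm)$, which is what forces the unique maximal adapted subset (when it exists) to coincide with $\Sigma^{sc}(\wm)$ itself.
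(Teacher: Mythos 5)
Your argument is correct and follows essentially the same route as the paper: both directions are formal consequences of Proposition~\ref{prop:orbincl}(\ref{orbincl_item1}) and (\ref{orbincl_item3}) together with the correspondence between irreducible components of $\ms$, dense $T$-orbits, and generic varieties, and your ``only if'' direction is essentially word-for-word the paper's. The only cosmetic difference is that for the ``if'' direction you pass through Corollary~\ref{cor:genmaxadap} and uniqueness of the maximal adapted subset, whereas the paper argues directly that $\overline{T\cdot[X]}=\ms$ for a variety $X$ with $\Sigma^{sc}(X)=\Sigma^{sc}(\wm)$.
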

\begin{proof}
This is a formal consequence of Proposition~\ref{prop:orbincl}.
We first assume that $\Sigma^{sc}(\wm)$ is adapted to $\wm$, and denote by $X$ an affine spherical $G$-variety such that $\wm(X) = \wm$ and $\Sigma^{sc}(X) = \Sigma^{sc}(\wm)$. We claim that $\overline{T\cdot[X]} = \ms$. To prove the claim, it suffices to show that if $Y$ is an affine spherical $G$-variety with $\wm(Y) = \wm$, then $T\cdot[Y] \inn \overline{T\cdot[X]}$. By Proposition~\ref{prop:orbincl}(\ref{orbincl_item3}) we have that $\Sigma^{sc}(Y) \inn \Sigma^{sc}(\wm)=\Sigma^{sc}(X)$. By Proposition~\ref{prop:orbincl}(\ref{orbincl_item1}), it follows that $T\cdot[Y] \inn \overline{T\cdot[X]}$.   
 
We turn to the the reverse implication. Since $\ms$ is irreducible, it has a unique dense $T$-orbit $T\cdot[X]$. In particular
\begin{equation}
\overline{T\cdot[X]} = \ms. \label{eq:doims}
\end{equation}
 We claim that $\Sigma^{sc}(X) = \Sigma^{sc}(\wm)$. By Proposition~\ref{prop:orbincl}(\ref{orbincl_item3}) we only have to show that $\Sigma^{sc}(\wm)\inn \Sigma^{sc}(X)$.  Let $\sigma \in \Sigma^{sc}(\wm)$. By the definition of $\Sigma^{sc}(\wm)$ there exists an affine spherical $G$-variety $Y$ with $\Sigma^{sc}(Y) = \{\sigma\}$ and $\wm(Y) = \wm$. By the equality (\ref{eq:doims}), it follows that $T\cdot[Y] \inn \overline{T\cdot[X]}.$ Proposition~\ref{prop:orbincl}(\ref{orbincl_item1}) finishes the proof.   
\end{proof}

\begin{remark} \label{rem:sigmascadapted}
Note that $\Sigma^{sc}(\wm)$ is adapted to $\wm$ if $\wm$ is $G$-saturated, and when $\Sigma^{sc}(\wm)$ does not contain any simple roots; see Corollary~\ref{cor:allsimdoub_msirr} below.
\end{remark}

\subsection{Spherical roots adapted to a weight monoid} \label{subsec:spherroots_adapted_weightmonoid}
In this subsection, we begin by recalling some results from \cite{msfwm}, including the combinatorial characterization of $\sigma \in \Sigma^{sc}(G)$ that are adapted to $\wm$; see Proposition~\ref{prop:adapsr}. We proceed with  a proof of a criterion formulated by Luna which characterizes the subsets of $\Sigma^{sc}(G)$ that are adapted to $\wm$; see Proposition~\ref{prop:luna-adapt}.

We begin by  introducing some notation which we will use in this subsection. Let $\wm$ be a normal submonoid of $\dw$.  
We will denote the dual cone to $\wm$ by $\wm^{\vee}$, that is,
\[\wm^{\vee}:=\{v\in\Hom_{\Z}(\Z\wm,\Q):
  \langle v,\gamma\rangle\geq0\mbox{ for all }\gamma\in\wm\}.\]
It is a strictly convex polyhedral cone, and we denote the set of primitive vectors
on its rays by $E(\wm)$:
\begin{equation}
E(\wm):= \{\delta \in (\Z\wm)^* \colon \delta
\text{ spans a ray of }\wm^\vee \text{ and } \delta \text{ is primitive}\}.
\end{equation}
Observe that
\begin{multline} \label{eq:1}
E(\wm)=\{\delta \in (\Z\wm)^*\colon \delta  \text{
    is primitive},
\delta(\wm) \inn \Z_{\ge 0}, \\
\delta \text{ is the equation of a face
  of codim $1$ of $\Q_{\ge 0}\wm$}\}. \end{multline} 
For 
$\alpha \in S \cap \Z\wm$, we define
\[a(\alpha):= \{\delta \in  (\Z\wm)^*\colon \<\delta,\alpha\>=1 \text{
  and } \bigl(\delta
\in E(\wm) \text{ or } \alpha^{\vee}|_{\Z\wm} - \delta \in E(\wm)\bigr)\}.\]

The next three results are taken from \cite{msfwm}. Before we state them, we recall from \cite{luna-typeA} the definition of the colors, and of an augmentation, of a spherical system. We use the formulation of \cite{msfwm}. 

\begin{definition} \label{def:colors_sph_system}
Let $\s=(S^p,\Sigma,\mathbf A)$ be a (spherically closed) spherical $G$-system. The
\textbf{set of colors of $\s$} is the finite set $\Delta$
obtained as the disjoint union $\Delta=\Delta^a\cup\Delta^{2a}\cup\Delta^b$ where:
\begin{itemize}
\item $\Delta^a=\mathbf A$,
\item $\Delta^{2a}=\{D_\alpha : \alpha\in S\cap{\frac1 2}\Sigma\}$,
\item $\Delta^b=\{D_\alpha : \alpha\in S\setminus(S^p\cup\Sigma\cup{\frac1 2}\Sigma)\}/\sim$, where $D_\alpha\sim D_\beta$ if $\alpha$ and $\beta$ are orthogonal and $\alpha+\beta\in\Sigma$.
\end{itemize} 
\end{definition}

\begin{definition} \label{def:augmentation}
Let $\mathscr S=(S^p,\Sigma,\mathbf A)$ be a spherically closed
spherical $G$-system with Cartan pairing $c: \Z \mathbf{A} \times
\Z\Sigma \to \Z$. 
An \textbf{augmentation} of $\mathscr S$ is a
lattice $\Lambda'\subset\Lambda$ endowed with a pairing
$c'\colon\Z\mathbf A\times\Lambda'\to\Z$ such that
$\Lambda'\supset\Sigma$ and
\begin{itemize}
\item[(a1)] $c'$ extends $c$;
\item[(a2)] if $\alpha\in S\cap\Sigma$ then $c'(D_\alpha^+,\xi)+c'(D_\alpha^-,\xi)=\langle\alpha^\vee,\xi\rangle$ for all $\xi\in\Lambda'$;
\item[($\sigma1$)] if $2\alpha\in2S\cap\Sigma$ then $\alpha \notin
  \Lambda'$ and $\langle\alpha^\vee,\xi\rangle\in2\Z$ for all $\xi\in\Lambda'$;
\item[($\sigma2$)] if $\alpha$ and $\beta$ are orthogonal elements of
  $\sr$ with $\alpha+\beta\in\Sigma$ then
  $\langle\alpha^\vee,\xi\rangle=\langle\beta^\vee,\xi\rangle$ for all
  $\xi\in\Lambda'$; and
\item[(s)] if $\alpha\in S^p$ then $\langle\alpha^\vee,\xi\rangle=0$ for all $\xi\in\Lambda'$.
\end{itemize}
Let $\col$ be the set of colors of $\mathscr S$. The \textbf{full
  Cartan pairing} of the augmentation is the $\Z$-bilinear map $c': \Z \col
\times \Lambda'\to\Z$ given by
\begin{equation} \label{eq:4}
c'(D,\gamma)=\left\{\begin{array}{ll}
c'(D,\gamma) & \mbox{ if $D\in\col^a$}; \\
{\frac1  2}\langle\alpha^\vee,\gamma\rangle & \mbox{ if $D=D_\alpha\in\col^{2a}$}; \\
\langle\alpha^\vee,\gamma\rangle & \mbox{ if $D=D_\alpha\in\col^b$}. 
\end{array}\right.
\end{equation}
\end{definition}

\begin{remark} \label{rem:colors_variety_system}
Let $X$ be a spherical $G$-variety. The set of colors of $X$ is naturally identified with the set of colors of $\mathscr{S}(X)$, thanks to \cite[Proposition 3.2]{luna-typeA}. The lattice $\lat(X)$ together with the Cartan pairing $c_X$ is an augmentation of $\mathscr{S}(X)$, thanks to \cite[Proposition 6.4]{luna-typeA}. 
\end{remark}

\begin{proposition}[{\cite[Proposition 2.13 and Remark 2.14]{msfwm}}]  \label{prop:adapt-spher-roots}
Let $\wm$ be a normal submonoid
of $\dw$. Suppose that a subset $\Sigma$ of $\Sigma^{sc}(G)$ is adapted to $\wm$, let $X$ be as in Definition~\ref{def:adapted}, and set $S^p=S^p(X)$, $\mathbf A=\mathbf A(X)$. Then $\mathscr S=(S^p, \Sigma, \mathbf A)$ satisfies
\begin{enumerate}
\item  $S^p=S^p(\wm)$; and \label{item:1}
\item  $\Z\wm$ is the lattice of an augmentation of $\mathscr S$, such that \label{item:2}
\item if $\delta \in E(\wm)$, then $\<\delta, \sigma\> \leq 0$ for all $\sigma \in
\Sigma$ or there exists $D\in \col$ such that $c(D, \cdot)$ is a
positive multiple of $\delta$; where $\col$ is the set of colors of
$\mathscr S$ and $c: \Z\col \times \Z\wm \to \Z$ is the full Cartan
pairing of the augmentation; and \label{item:3}
\item $ c(D,\cdot) \in \wm^\vee$ for all $D \in \mathbf{A}$. \label{item:4}
\end{enumerate}
Viceversa, let $\Sigma$ be a subset of $\Sigma^{sc}(G)$ and suppose that there exists a spherically closed spherical system $\mathscr S=(S^p, \Sigma, \mathbf A)$ with the properties (\ref{item:1})--(\ref{item:4}). Then $\Sigma$ is adapted to $\wm$, and $\mathscr S$ and the augmentation are uniquely determined by these properties.
\end{proposition}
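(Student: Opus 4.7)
The plan is to use the standard Luna--Vust dictionary between affine spherical $G$-varieties and colored cones, combined with the classification of spherically closed spherical homogeneous spaces by spherical systems (the Luna conjecture, proved in \cite{bravi-pezzini-primwonderful}), to translate the statement into purely combinatorial assertions and back.

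For the forward direction, suppose $X$ is an affine spherical $G$-variety with $\wm(X)=\wm$ and $\Sigma^{sc}(X)=\Sigma$, and put $S^p=S^p(X)$, $\mathbf A=\mathbf A(X)$. Property~(\ref{item:1}) is immediate: $S^p(X)$ consists of those $\alpha\in \sr$ for which every $B$-semi-invariant in $\C[X]$ is also $P_\alpha$-semi-invariant, which amounts to $\<\alpha^\vee,\lambda\>=0$ for all $\lambda\in\wm$, so $S^p(X)=S^p(\wm)$. For (\ref{item:2}) we use that, since $X$ is affine, $\lat(X)=\Z\wm$, together with Remark~\ref{rem:colors_variety_system}, which tells us that $(\lat(X),c_X)$ is an augmentation of $\s(X)$. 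Properties (\ref{item:3}) and (\ref{item:4}) are the combinatorial translation of the description of the $B$-stable prime divisors of an affine spherical variety: the rays of $\wm^\vee$ are in bijection with these divisors via $\rho_X$, and each such divisor is either $G$-stable (contributing an element of $\V(X)$, hence one pairing non-positively with every $\sigma\in\Sigma$) or a color (contributing a positive rational multiple of $c(D,\cdot)$ for some $D\in\col$). Property (\ref{item:4}) reflects that every color, being a prime divisor of $X$, has non-negative valuation on every regular $B$-semi-invariant function.

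For the converse, suppose $\s=(S^p,\Sigma,\mathbf A)$ is a spherically closed spherical system together with an augmentation having lattice $\Z\wm$ and satisfying (\ref{item:1})--(\ref{item:4}). The Luna conjecture yields a spherically closed spherical subgroup $\overline{H}\inn G$, unique up to conjugation, with $\s(G/\overline{H})=\s$. The augmentation then determines (see~\cite[Theorem 4.4]{knop-lv}) a spherical subgroup $H\inn\overline{H}$ such that $\overline{H}$ is the spherical closure of $H$, $\lat(G/H)=\Z\wm$, and $\s(G/H)=\s$. Conditions~(\ref{item:3}) and (\ref{item:4}) express exactly the requirement that the pair consisting of the cone $\wm^\vee$ and the colors in $\mathbf A$ forms a strictly convex colored cone of affine type in the Luna--Vust sense, so it defines a unique affine spherical embedding $X$ of $G/H$. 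Reading off the weight monoid of $X$ from its colored cone gives $\wm(X)=\wm$, while by construction $\Sigma^{sc}(X)=\Sigma$, so $\Sigma$ is adapted to $\wm$. Uniqueness of $X$ up to $G$-isomorphism (and hence uniqueness of $\s$ and of the augmentation) follows from Losev's theorem~\cite{losev-knopconj}.

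The principal technical obstacle lies in the converse: one must verify that (\ref{item:3}) forces every ray of $\wm^\vee$ to be covered either by a $G$-invariant valuation (automatically satisfying the defining inequalities of $\V(G/H)$) or by the image of a color under the full Cartan pairing, and that (\ref{item:4}) correctly encodes the incidence of the colors in $\mathbf A$ with $\wm^\vee$, so that the resulting Luna--Vust data genuinely produce an affine embedding whose weight monoid is exactly $\wm$ rather than some strictly larger submonoid of $\dw$.
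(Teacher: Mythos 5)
First, a point of comparison: the paper does not prove this proposition at all --- it is quoted verbatim from \cite[Proposition 2.13 and Remark 2.14]{msfwm}, and the paper itself only remarks that the viceversa statement relies on the Luna Conjecture. So your proposal is an attempt to reprove the cited result; its overall route (forward direction by reading off invariants of $X$, converse by the Luna Conjecture, Luna's augmentations, and Luna--Vust embedding theory) is broadly the route of the cited source, not a genuinely new one.

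The forward direction of your sketch is essentially fine, up to one imprecision: the map $D\mapsto\rho_X(D)$ from $B$-stable prime divisors to $(\Z\wm)^*$ is \emph{not} a bijection onto the rays of $\wm^\vee$; what is true (and what condition (\ref{item:3}) encodes) is only that, by normality, $\wm^\vee$ is generated by the $\rho_X(D)$, so every extremal ray contains the image of \emph{some} $B$-stable prime divisor, which is then either $G$-stable (hence lies in $\V(X)$) or a color. The genuine gap is in the converse, which is the entire content of the statement: after producing $\overline H$ from the Luna Conjecture and a subgroup $H$ from the augmentation (for which, incidentally, \cite[Theorem 4.4]{knop-lv} is not the right reference --- that theorem concerns quotients of spherical homogeneous spaces; the relevant tool is Luna's theory of augmentations in \cite[\S 6]{luna-typeA}, whose general validity again rests on \cite{bravi-pezzini-primwonderful} and \cite{losev-uniqueness}), you assert that conditions (\ref{item:3}) and (\ref{item:4}) ``express exactly'' that one gets an affine embedding $X$ of $G/H$ with $\wm(X)=\wm$, and then you explicitly defer this verification as ``the principal technical obstacle.'' But that verification \emph{is} the proof: one must check Knop's affineness criterion for the colored datum (which in particular forces $G/H$ to be quasi-affine), show that all colors --- not only those in $\mathbf A$ --- pair non-negatively with $\wm$, and compute from the Luna--Vust description that the resulting $X$ has weight monoid exactly $\wm$ and $\Sigma^{sc}(X)=\Sigma$; none of this is carried out. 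Finally, the uniqueness assertion should not be attributed to the Knop conjecture (which concerns smooth varieties): what is needed is Losev's uniqueness theorem that a (possibly singular) affine spherical variety is determined by its weight monoid together with its spherical roots, i.e.\ \cite[Theorem 1.2]{losev-knopconj}, plus the observation that $\s$ and the augmentation are recovered from $X$. As it stands, the proposal identifies the correct framework but leaves the decisive step unproved.
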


\begin{remark} 
\begin{enumerate}[(a)]
\item The proof of the fact that $\Sigma$ is adapted to $\wm$ in the viceversa statement of Proposition~\ref{prop:adapt-spher-roots} relies on the Luna Conjecture, that is on \cite[Theorem 1.2.3]{bravi-pezzini-primwonderful}.
\item In condition (\ref{item:4}) of Proposition~\ref{prop:adapt-spher-roots}, we could replace $\mathbf{A}$ by the set $\col$ of all colors of $\mathscr S$. Indeed, if $D \in \col \setminus \mathbf{A}$, then $c(D, \cdot)$ takes the same values on $\Z\wm$ as a coroot or its half, and therefore takes nonnegative values on $\wm \inn \dw$.
\end{enumerate} 
\end{remark}

The following lemma, extracted from the proof of \cite[Corollary 2.15]{msfwm}, explains the ``meaning'' of the set $a(\alpha)$.
\begin{lemma} \label{lem:a_alpha}
Let $\s = (S^p, \Sigma, \A)$ and $(\Z\wm,c)$ be the spherically closed spherical system and the augmentation as in Proposition~\ref{prop:adapt-spher-roots}. If $\alpha \in \Sigma \cap \sr$ and $\A(\alpha)=\{D_{\alpha}^+, D_{\alpha}^-\}$, then
\begin{equation}
a(\alpha) = \{c(D_{\alpha}^+, \cdot), c(D_{\alpha}^-,\cdot)\}. \label{eq:a_alpha_meaning}
\end{equation}
\end{lemma}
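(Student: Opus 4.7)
The plan is to use Proposition~\ref{prop:adapt-spher-roots} to interpret $c$ as the Cartan pairing of an affine spherical $G$-variety $X$ with weight monoid $\wm$ and spherically closed spherical system $\mathscr S=(S^p,\Sigma,\mathbf A)$, and then verify both inclusions in~\eqref{eq:a_alpha_meaning} by combining the axioms of Definitions~\ref{def:spherical-systems} and~\ref{def:augmentation} with properties (3) and (4) of Proposition~\ref{prop:adapt-spher-roots}. First, axioms (A1) and (A2) applied to $\alpha \in \Sigma \cap S$ give $c(D_\alpha^\pm,\alpha) = 1$, and axiom (a2) of the augmentation extends (A2) to the identity $c(D_\alpha^+,\cdot) + c(D_\alpha^-,\cdot) = \alpha^\vee|_{\Z\wm}$ on all of $\Z\wm$.

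For the inclusion $a(\alpha) \subseteq \{c(D_\alpha^+,\cdot), c(D_\alpha^-,\cdot)\}$, take $\delta \in a(\alpha)$; using the identity above and replacing $\delta$ by $\alpha^\vee|_{\Z\wm}-\delta$ if necessary, I may assume $\delta \in E(\wm)$. Since $\langle\delta,\alpha\rangle = 1 > 0$, property (3) of Proposition~\ref{prop:adapt-spher-roots} yields a color $D$ of $\mathscr S$ and a positive integer $m$ with $c(D,\cdot) = m\delta$. The full Cartan pairing of Definition~\ref{def:augmentation} shows $c(D,\alpha) \le 0$ whenever $D$ has type $b$ or $2a$, because such a color has associated simple root $\beta \neq \alpha$ (the hypothesis $\alpha \in \Sigma$ excludes $\alpha$ from being associated to a type $b$ or type $2a$ color) and $\langle\beta^\vee,\alpha\rangle \le 0$ for distinct simple roots. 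Hence $D \in \mathbf A$; axiom (A1) then forces $m = 1$ and $c(D,\alpha) = 1$, and the definition of $\mathbf A(\alpha)$ together with axiom (A2) gives $D \in \{D_\alpha^+, D_\alpha^-\}$, so $\delta = c(D,\cdot)$ is as claimed.

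For the reverse inclusion, by the same identity it suffices to prove that at least one of $c(D_\alpha^\pm,\cdot)$ lies in $E(\wm)$. Property (4) places both functionals in the pointed rational polyhedral cone $\wm^\vee$, so $c(D_\alpha^+,\cdot)$ admits a decomposition as a nonnegative rational combination of primitive ray generators in $E(\wm)$; pairing with $\alpha$, at least one generator $r$ in this combination satisfies $\langle r,\alpha\rangle > 0$. Applying the reasoning of the previous paragraph to $r$ (noting that $\langle r,\alpha\rangle \in \Z_{>0}$) shows $r = c(D,\cdot)$ with $D \in \mathbf A(\alpha) = \{D_\alpha^+, D_\alpha^-\}$; hence one of the two functionals in question equals $r$ and therefore belongs to $E(\wm)$. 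The main obstacle is the case analysis ruling out color types $b$ and $2a$ in producing a ray generator with positive $\alpha$-pairing; this analysis relies crucially on $\alpha$ itself being a spherical root, which prevents it from being the simple root associated to a type $b$ or type $2a$ color.
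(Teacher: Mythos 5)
Your argument is correct. The paper itself prints no proof of this lemma --- it is stated as ``extracted from the proof of \cite[Corollary 2.15]{msfwm}'' --- so there is no in-text argument to compare against; what you give is a complete, self-contained derivation from exactly the data the paper makes available: the identity $c(D_\alpha^+,\cdot)+c(D_\alpha^-,\cdot)=\alpha^\vee|_{\Z\wm}$ from (A2) and axiom (a2), property (\ref{item:3}) of Proposition~\ref{prop:adapt-spher-roots} to produce a color over any $\delta\in E(\wm)$ with $\langle\delta,\alpha\rangle>0$, the full Cartan pairing of Definition~\ref{def:augmentation} together with (A1) to force that color into $\A(\alpha)$ with multiple $1$, and property (\ref{item:4}) plus strict convexity of $\wm^\vee$ to extract a ray generator pairing positively with $\alpha$ for the reverse inclusion. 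The symmetry $\delta\leftrightarrow\alpha^\vee|_{\Z\wm}-\delta$ is handled correctly, since the right-hand side of \eqref{eq:a_alpha_meaning} is stable under it. One small point you assert without justification: for a color of type $2a$ the associated simple root $\beta$ satisfies $2\beta\in\Sigma$, and ruling out $\beta=\alpha$ requires knowing that $\alpha$ and $2\alpha$ cannot both be spherical roots; this follows in one line from axiom ($\Sigma1$) (taking $\sigma=\alpha$ gives $\tfrac12\langle\alpha^\vee,\alpha\rangle=1>0$) or from axiom ($\sigma1$) (which would force $\alpha\notin\Z\wm$ while $\alpha\in\Sigma\subset\Z\wm$). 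With that sentence added, the proof is airtight.
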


While Proposition~\ref{prop:adapt-spher-roots} depends on the Luna Conjecture, the following combinatorial characterization of $\sigma \in \Sigma^{sc}(G)$ that are adapted to $\wm$ only uses the classification of spherical varieties of rank $1$ \cite{ahiezer-eqvcompl,brion-rank1}. 
\begin{proposition}[{\cite[Corollary 2.16]{msfwm}}]  \label{prop:adapsr}
Let $\wm$ be a normal monoid of dominant weights. An element $\sigma$ of $\Sigma^{sc}(G)$ is adapted to $\wm$ if and only if all of the following conditions hold:
\begin{enumerate}[(1)]
\item $\sigma \in \Z\wm $; \label{item:inasr1}
\item $\sigma$ is compatible with $S^p(\wm)$; \label{item:inasr2}
\item if $\sigma \notin \sr$ and $\delta \in E(\wm)$ such that
  $\<\delta, \sigma\> > 0$ then there exists $\beta \in S\setminus
  S^p(\wm)$ such that $\beta^\vee$ is a positive multiple of
  $\delta$; \label{item:inasr3}
\item if $\sigma \in \sr$ then   \label{item:inasr4}
\begin{enumerate}[(a)]
\item $a(\sigma)$ has one or two elements; and    \label{item:inasr4a}
\item $\<\delta, \gamma\> \ge 0$ for all $\delta \in a(\sigma)$ and
  all $\gamma \in \wm$; and     \label{item:inasr4b}
\item $\<\delta, \sigma\> \le 1$ for all $\delta \in E(\wm)$; \label{item:inasr4c}
\end{enumerate}
\item if $\sigma = 2\alpha \in 2\sr$, then $\alpha \notin \Z\wm$ and  $\<\alpha^{\vee}, \gamma\>
\in 2\Z$ for all $\gamma \in \wm$; \label{item:inasr5}
\item if $\sigma = \alpha + \beta$ with $\alpha,\beta \in \sr$ and \label{nsorths}
  $\alpha \perp \beta$, then $\alpha^{\vee} = \beta^{\vee}$ on $\wm$.
\end{enumerate}
\end{proposition}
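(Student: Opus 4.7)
My plan is to apply Proposition~\ref{prop:adapt-spher-roots} to the singleton $\Sigma = \{\sigma\}$ and then translate each of the abstract requirements on a spherically closed spherical system and its augmentation into the explicit combinatorial conditions (1)--(6). By the forward direction of Proposition~\ref{prop:adapt-spher-roots}, $\sigma$ is adapted to $\wm$ if and only if there is a spherically closed spherical system $\mathscr{S} = (S^p(\wm), \{\sigma\}, \mathbf{A})$ admitting $\Z\wm$ as the lattice of an augmentation with full Cartan pairing $c$, and satisfying conditions (\ref{item:3}) and (\ref{item:4}) of that proposition. So the task reduces to showing that the existence of such $\mathscr{S}$, $\mathbf{A}$ and $c$ is equivalent to the six conditions in the present proposition.

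Several correspondences are essentially immediate. Condition (1) records the requirement $\sigma \in \Lambda' = \Z\wm$ built into the definition of augmentation. Condition (2), i.e.\ compatibility with $S^p(\wm)$, is exactly axiom (S) of Definition~\ref{def:spherical-systems} (cf.\ Remark~\ref{rem:spherical-systems}). Conditions (5) and (6) are, term for term, the augmentation axioms $(\sigma 1)$ and $(\sigma 2)$. The condition (\ref{item:4}) of Proposition~\ref{prop:adapt-spher-roots} that $c(D,\cdot)\in\wm^\vee$ for $D\in\mathbf{A}$ translates directly into the non-negativity part of (4b) (and in the non-simple case contributes no constraint since $\mathbf{A}=\emptyset$ then).

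The substantive content of the proof is the case analysis on whether $\sigma$ is a simple root. If $\sigma \notin S$, then by axiom (A3) $\mathbf{A}$ must be empty, so the auxiliary clause in condition (\ref{item:3}) of Proposition~\ref{prop:adapt-spher-roots} about the existence of a $D \in \col$ with $c(D,\cdot)$ proportional to $\delta$ can only be realized by a color in $\Delta^{2a}\cup\Delta^b$; by Definition~\ref{def:colors_sph_system} these are indexed by simple roots not in $S^p$, and their Cartan pairings are $\frac12\alpha^\vee$ or $\alpha^\vee$. Inspecting the formula~(\ref{eq:4}) and combining this with $\delta$ being primitive in $(\Z\wm)^*$ yields exactly condition (3). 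If $\sigma = \alpha \in S$, the only possible colors produced by $\sigma$ lie in $\Delta^a=\mathbf{A}$, so axiom (A2) forces $\mathbf{A}(\alpha)=\{D_\alpha^+,D_\alpha^-\}$ with $c(D_\alpha^+,\cdot)+c(D_\alpha^-,\cdot)=\alpha^\vee|_{\Z\wm}$; using Lemma~\ref{lem:a_alpha} and~(\ref{eq:1}) one checks that the possible pairs $\{c(D_\alpha^+,\cdot),c(D_\alpha^-,\cdot)\}$ are in bijection with the sets $a(\alpha)$ of cardinality $1$ or $2$, which is precisely (4a), while (4c) reformulates axiom (A1).

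The main obstacle will be the ``if'' direction in the simple-root case: given conditions (1), (2), (4), (5), (6), one needs to actually build $\mathbf{A}$ and verify that $(S^p(\wm),\{\sigma\},\mathbf{A})$ with the pairing dictated by $a(\sigma)$ is a spherically closed spherical system satisfying the hypotheses of Proposition~\ref{prop:adapt-spher-roots}. The existence of such a system at rank~$1$ is where one invokes the classification of rank~$1$ spherical varieties of Ahiezer and Brion (\cite{ahiezer-eqvcompl,brion-rank1}) mentioned just before the statement; one has to match each of the possible shapes of $\sigma$ in Table~\ref{table:scspher} with a rank~$1$ model and check that the defining data of the augmentation supplied by $\Z\wm$ and $a(\sigma)$ are consistent with that model. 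Once these checks are made for each entry of the table, the proposition follows by combining them with the uniqueness assertion of Proposition~\ref{prop:adapt-spher-roots}.
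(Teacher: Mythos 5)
The paper itself contains no proof of this proposition: it is quoted from \cite[Corollary 2.16]{msfwm}, and the sentence immediately preceding it is careful to record that, unlike Proposition~\ref{prop:adapt-spher-roots}, this statement relies only on the classification of rank one spherical varieties. Judged on its own terms, your translation dictionary is essentially sound: specializing Proposition~\ref{prop:adapt-spher-roots} to $\Sigma=\{\sigma\}$ does match (1), (2), (5), (6) with the system and augmentation axioms, and the analysis of colors of type $a$, $2a$, $b$ does produce (3) and (4); note however that (4c) is not simply a reformulation of axiom (A1) --- to get $\<\delta,\sigma\>\le 1$ you also need condition (\ref{item:3}) of Proposition~\ref{prop:adapt-spher-roots} (to know the ray through $\delta$ carries a color), integrality of $c(D,\cdot)$ on $\Z\wm$ and primitivity of $\delta$. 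This construct-and-verify strategy is, incidentally, the one the paper itself uses to prove Proposition~\ref{prop:luna-adapt}, so the first part of your plan is well aligned with the paper's toolkit.

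The genuine gap is in your closing paragraph, which is exactly where you locate the main work. The \emph{uniqueness} assertion of Proposition~\ref{prop:adapt-spher-roots} cannot yield the existence of an affine spherical variety with weight monoid $\wm$ and $\Sigma^{sc}=\{\sigma\}$; existence is the content of the viceversa statement, and the ``matching of each shape of $\sigma$ with a rank-one model'' together with the verification that the resulting affine variety has weight monoid exactly $\wm$ (not merely the correct rank-one spherical system and lattice) is precisely the nontrivial step, which you announce but do not carry out. As written you fall between two options: either invoke the viceversa of Proposition~\ref{prop:adapt-spher-roots} outright, which closes the argument but makes the proposition depend on the Luna Conjecture, contradicting both the paper's remark and your own framing, and renders the table-by-table discussion superfluous; or argue as in \cite{msfwm}, constructing the rank-one affine varieties directly from the Ahiezer--Brion classification, in which case the case-by-case construction and the computation of the resulting weight monoid must actually be performed rather than asserted.
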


The following criterion was formulated by Luna in 2005 in an unpublished note.  
\begin{proposition} \label{prop:luna-adapt}
Let $\wm$ be a normal monoid of dominant weights. A subset $\Sigma$ of $\Sigma^{sc}(G)$ is adapted to $\wm$ if and only if the following two conditions hold:
\begin{enumerate}[(a)]
\item $\Sigma$ is a subset of $\Sigma^{sc}(\wm)$; \label{adapt1}
\item If $\alpha \in \sr \cap \Sigma$, $\delta \in a(\alpha)$ and $\gamma \in \Sigma$ satisfy $\<\delta,\gamma\> >0$, then $\gamma \in \sr$ and $\delta \in a(\gamma)$. \label{adapt2}
\end{enumerate}
\end{proposition}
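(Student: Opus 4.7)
The plan is to invoke Proposition~\ref{prop:adapt-spher-roots} in both directions.

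For the ``only if'' implication, let $X$ be an affine spherical $G$-variety with $\wm(X)=\wm$ and $\Sigma^{sc}(X)=\Sigma$; write $\s(X)$ and $(\Z\wm,c_X)$ for its spherical system and augmentation. Condition~(\ref{adapt1}) is immediate: for every $\sigma \in \Sigma$, Proposition~\ref{prop:orbincl}(\ref{orbincl_item2}) produces an affine spherical $G$-variety $Y$ with $\wm(Y)=\wm$ and $\Sigma^{sc}(Y)=\{\sigma\}$, hence $\sigma \in \Sigma^{sc}(\wm)$. For condition~(\ref{adapt2}), given $\alpha \in \sr \cap \Sigma$ and $\delta \in a(\alpha)$, Lemma~\ref{lem:a_alpha} writes $\delta=c_X(D,\cdot)$ for some $D \in \A(X,\alpha)$; if $\<\delta,\gamma\>>0$ for some $\gamma\in\Sigma$, then $c_X(D,\gamma)>0$, and axiom~(A1) of Definition~\ref{def:spherical-systems} forces $c_X(D,\gamma)=1$ and $\gamma \in \sr$, so $D \in \A(X,\gamma)$ and a second use of Lemma~\ref{lem:a_alpha} yields $\delta \in a(\gamma)$.

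For the ``if'' implication, I will construct a spherically closed spherical $G$-system $\s=(S^p(\wm),\Sigma,\A)$ and an augmentation $(\Z\wm,c)$ satisfying conditions~(\ref{item:1})--(\ref{item:4}) of Proposition~\ref{prop:adapt-spher-roots}; its converse (which invokes the Luna Conjecture) will then show $\Sigma$ is adapted to $\wm$. For each $\alpha \in \sr \cap \Sigma \subseteq \Sigma^{sc}(\wm)$, Proposition~\ref{prop:adapsr}(\ref{item:inasr4a}) ensures $|a(\alpha)|\in\{1,2\}$, and the involution $\delta \mapsto \alpha^{\vee}|_{\Z\wm}-\delta$ on $a(\alpha)$ pairs its elements. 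I set $\A(\alpha)=\{D_{\alpha}^+,D_{\alpha}^-\}$, assign to $c(D_{\alpha}^\pm,\cdot)|_{\Z\wm}$ the two (possibly coincident) elements of $a(\alpha)$, and identify $D_{\alpha}^\pm$ with $D_{\beta}^{\pm'}$ whenever the corresponding elements of $a(\alpha)$ and $a(\beta)$ agree in $(\Z\wm)^*$.

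With this construction, condition~(\ref{item:1}) holds by definition, condition~(\ref{item:4}) is Proposition~\ref{prop:adapsr}(\ref{item:inasr4b}), and condition~(\ref{item:3}) combines Proposition~\ref{prop:adapsr}(\ref{item:inasr3}) when $\sigma \notin \sr$ with Proposition~\ref{prop:adapsr}(\ref{item:inasr4c}) when $\sigma \in \sr$ (in that case $\delta \in a(\sigma)$ and one of the colors $D_{\sigma}^\pm$ realizes $\delta$). The spherical-system and augmentation axioms reduce similarly: (A2), (A3), (a1), (a2) are built into the construction; (S), (s), ($\sigma 1$), ($\sigma 2$), ($\Sigma 2$) follow from Proposition~\ref{prop:adapsr}(\ref{item:inasr2}), (\ref{item:inasr5}), (\ref{nsorths}) applied to individual roots; and (A1) is exactly condition~(\ref{adapt2}) together with $\<\delta,\alpha\>=1$ for $\delta\in a(\alpha)$. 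The main obstacle will be verifying that the cross-root identification of colors yields a well-defined spherical system in which axiom~($\Sigma 1$) holds for any pair $2\alpha,\sigma\in\Sigma$; condition~(\ref{adapt2}) is the main ingredient for the first part, while the second will require a case analysis exploiting the joint membership of $2\alpha$ and $\sigma$ in $\Sigma^{sc}(\wm)$ via Propositions~\ref{prop:adapsr}(\ref{item:inasr3}) and~(\ref{item:inasr5}).
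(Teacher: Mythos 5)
Your overall route is the same as the paper's: necessity via Proposition~\ref{prop:orbincl} and Lemma~\ref{lem:a_alpha} together with axiom (A1), and sufficiency by building the triple $(S^p(\wm),\Sigma,\A)$ with $\A(\alpha)=\{D_\alpha^+,D_\alpha^-\}$, values taken from $a(\alpha)$ (or $\tfrac12\alpha^\vee|_{\Z\wm}$ when $|a(\alpha)|=1$), gluing colors with equal functionals, and then checking the hypotheses of Proposition~\ref{prop:adapt-spher-roots} via Proposition~\ref{prop:adapsr}. That part is sound, and your use of condition~(\ref{adapt2}) for (A1) and for the well-definedness of the gluing matches the paper.

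However, the step you defer is precisely the substantive one, so there is a genuine gap. For axiom ($\Sigma1$) the integrality is indeed Proposition~\ref{prop:adapsr}(\ref{item:inasr5}), but the inequality $\<\alpha^\vee,\sigma\>\leq 0$ for $2\alpha\in\Sigma$ and $\sigma\in\Sigma\setminus\{2\alpha\}$ does not follow from a quick combination of Proposition~\ref{prop:adapsr}(\ref{item:inasr3}) and (\ref{item:inasr5}) as you suggest: the paper isolates it as Lemma~\ref{lem:coroot_nonpos} and proves it by a full case-by-case analysis over all types of spherically closed spherical roots in Table~\ref{table:scspher}, using $\sigma\in\Z\wm$ (\ref{item:inasr1}), the compatibility of $\sigma$ with $S^p(\wm)$ (\ref{item:inasr2}) together with $\<\alpha^\vee,\gamma\>=0$ for $\gamma\in S^p(\wm)$, the evenness of $\<\alpha^\vee,\cdot\>$ on $\Z\wm$ coming from (\ref{item:inasr5}), and (\ref{nsorths}) for the $\sA_1\times\sA_1$ case; condition (\ref{item:inasr3}) plays no role there. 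Without this lemma (or an equivalent argument) the sufficiency proof is incomplete. A second, smaller omission: in verifying condition~(\ref{item:3}) of Proposition~\ref{prop:adapt-spher-roots} for $\sigma\notin\sr$, Proposition~\ref{prop:adapsr}(\ref{item:inasr3}) only hands you a simple root $\beta$ with $\beta^\vee|_{\Z\wm}\in\Q_{>0}\delta$; to conclude that some color of your constructed system has functional proportional to $\delta$ you must rule out $\beta\in\Sigma$ (since colors moved by $\beta\in\Sigma\cap\sr$ have functionals in $a(\beta)$, not $\beta^\vee$ or $\tfrac12\beta^\vee$). The paper does this by noting $\<\delta,\beta\>=1$ via (\ref{item:inasr4c}), so $\delta\in a(\beta)$, and then condition~(\ref{adapt2}) would force $\sigma\in\sr$, a contradiction; this use of (\ref{adapt2}) is missing from your sketch.
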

\begin{proof}
We first prove  the necessity of the two conditions. Assume that $\Sigma$ is adapted to $\wm$. Condition (\ref{adapt1}) is Proposition~\ref{prop:orbincl}(\ref{orbincl_item3}). Thanks to Lemma~\ref{lem:a_alpha}, condition (\ref{adapt2}) follows from Luna's axiom (A1).  

To show that the two conditions are sufficient, we will use Proposition~\ref{prop:adapt-spher-roots}. We 
first construct a triple $\s = (S^p, \Sigma, \A)$ and a pairing $c\colon \Z\A \times \Z\wm \to \Z$, and then show that (\ref{adapt1}) and (\ref{adapt2}) imply that $\s = (S^p, \Sigma, \A)$ and $c$ satisfy all the conditions in Proposition~\ref{prop:adapt-spher-roots}. Note that (\ref{adapt1}) means exactly that every $\sigma \in \Sigma$ satisfies conditions (\ref{item:inasr1}) -- (\ref{nsorths}) in Proposition~\ref{prop:adapsr}.  
 
We put $S^p:=S^p(\wm)$. For every $\alpha \in \Sigma \cap \sr$ we formally put $\A(\alpha):=\{D_{\alpha}^+, D_{\alpha}^-\}$. If $a(\alpha)$ has one element, then we put $c(D_{\alpha}^+,\cdot) := c(D_{\alpha}^-,\cdot):= \frac{1}{2}\alpha^{\vee}|_{\Z\wm}$. If $a(\alpha)$ has two elements, say $a(\alpha) = \{\delta_{\alpha}^+, \delta_{\alpha}^-\}$, then we set $c(D_{\alpha}^+,\cdot):=\delta_{\alpha}^+$ and $c(D_{\alpha}^-,\cdot):=\delta_{\alpha}^-$. Finally, we put 
\[
\A:=\frac{\coprod_{\alpha \in \Sigma^{sc} \cap \sr} \A(\alpha)}{\sim}
\]
where $D_1 \sim D_2$ if there exist $\alpha,\beta \in \Sigma \cap \sr$ such that $\alpha\neq\beta$, $D_1 \in \A(\alpha)$, $D_2 \in \A(\beta)$ and $c(D_1,\cdot) = c(D_2,\cdot)$. 

\underline{Step 1:} We check that $\s = (S^p, \Sigma, \A)$ is a spherically closed spherical system. Stricly speaking, this triple is equipped with the restriction of $c:\Z\A \times \Z\wm \to \Z$ to $\Z\A \times \Z\Sigma$, but we will also denote this restriction by $c$, since no confusion will arise.  We begin by verifying axiom (A1). Let $D \in \A(\alpha)$ for some $\alpha \in \Sigma \cap \sr$ and let $\gamma \in \Sigma$. Then $c(D,\cdot) \in a(\alpha)$.  If $c(D,\gamma)>0$, then $\gamma \in \sr$ and $c(D,\cdot) \in a(\gamma)$ by (\ref{adapt2}). By the definition of $a(\gamma)$, it follows that $c(D,\gamma)=1$ and so (A1) holds.

Axioms (A2) and (A3) hold by the construction of $\A$, where we idenitfy $\A(\alpha)$ with its image in $\A$. Axiom ($\Sigma$2) follows from (\ref{nsorths}) in Proposition~\ref{prop:adapsr}. Axiom (S) follows from (\ref{item:inasr2}) in Proposition~\ref{prop:adapsr}. 

Next, we turn to axiom ($\Sigma$1). Let $2\alpha \in \Sigma \cap 2\sr$ and let $\sigma \in \Sigma \setminus \{2\alpha\}$. The fact that $\<\frac{1}{2}\alpha^{\vee}, \sigma\> \in \Z$ follows  from (\ref{item:inasr5}) in Proposition~\ref{prop:adapsr}. We need to show that 
\begin{equation}
\<\alpha^{\vee}, \sigma\> \leq 0,   
\end{equation}
but this follows from Lemma~\ref{lem:coroot_nonpos} below.

\underline{Step 2:} We now check that $(\Z\wm, c)$ is an augmentation of $\s$; that is, we check all the conditions of Definition~\ref{def:augmentation}.  Since $\Sigma \inn \Sigma^{sc}(\wm)$, it follows from Proposition~\ref{prop:adapsr}(\ref{item:inasr1}) that $\Z\Sigma \inn \Z\wm$. Axiom (a1) holds because $\s$ was equipped with the restriction of $c:\Z\A \times \Z\wm \to \Z$ to $\Z\A \times \Z\Sigma$. Axiom (a2) holds by the construction of $c$. Axiom ($\sigma 1$) is an immediate consequence of Proposition~\ref{prop:adapsr}(\ref{item:inasr5}). Similary, axiom ($\sigma 2$) follows from Proposition~\ref{prop:adapsr}(\ref{nsorths}). Axiom (s), finally, is true by the definition of $S^p = S^p(\wm)$, cf.\ Definition~\ref{def:Sp}.

\underline{Step 3:} We verify condition (\ref{item:3}) of Proposition~\ref{prop:adapt-spher-roots}. Let $\delta \in E(\wm)$ and $\sigma \in \Sigma$ such that $\<\delta, \sigma\> > 0$. Let $\col$ be the set of colors of $\s$. We have to prove that there exists $D \in \col$ such that $c(D,\cdot)$ is a positive multiple of $\delta$. We will consider two cases:
\begin{enumerate}[(i)]
\item $\sigma \notin \sr$; \label{item:condition3c1}
\item $\sigma \in \sr$. \label{item:condition3c2}
\end{enumerate}

Suppose we are in case (\ref{item:condition3c1}). Then Proposition~\ref{prop:adapsr}(\ref{item:inasr3}) tells us there exists $\beta \in \sr \setminus S^p$ such that $\beta^{\vee}|_{\Z\wm} \in \Q_{>0} \delta$. If $\beta \notin \Sigma$, then the construction of the full Cartan pairing of $\s$ implies that there exists $D \in \col$ such that $c(D,\cdot)$ is equal to $\beta^{\vee}|_{\Z\wm}$ or to $\frac{1}{2}\beta^{\vee}|_{\Z\wm}$. It follows that $c(D,\cdot)$ is a positive rational multiple of $\delta$. On the other hand we claim that $\beta \in \Sigma$ is impossible. Indeed, if $\beta$ were an element of $\Sigma$, then $\<\delta, \beta\>=1$ by (\ref{item:inasr4c}) of Proposition~\ref{prop:adapsr} and consequently $\delta \in a(\beta)$, which would imply, by (\ref{adapt2}) of the present proposition, that $\sigma \in \sr$. But this contradicts our assumption (\ref{item:condition3c1}). 

We now consider case (\ref{item:condition3c2}). Then $\delta \in a(\sigma)$ by Proposition~\ref{prop:adapsr} (\ref{item:inasr4c}). By the construction of $c$ above in this proof, it follows that $\delta = c(D,\cdot)$ for at least one of the colors $D \in \A(\alpha)$.

\underline{Step 4:} Finally, we verify condition (\ref{item:4}) of Proposition~\ref{prop:adapt-spher-roots}. Suppose $D \in \A(\alpha)$ for some $\alpha \in \sr \cap \Sigma$. If $|a(\alpha)|=1$, then $c(D,\cdot)$ is a positive rational multiple of a coroot and therefore takes nonnegative values on $\wm \inn \dw$. If $|a(\alpha)|=2$, then we conclude, through (\ref{adapt1}), by condition (\ref{item:inasr4b}) of Proposition~\ref{prop:adapsr}.  
\end{proof}

\begin{lemma} \label{lem:coroot_nonpos}
Let $\wm$ be a normal submonoid of $\dw$ and let $\sigma, \beta \in
\Sigma^{sc}(\wm)$ with $\sigma \neq \beta$. If $\beta = 2\alpha \in
2\sr$, or $\beta = \alpha \in \sr$ with $|a(\alpha)|=1$, then
\begin{equation}
\<\alpha^{\vee}, \sigma\> \leq 0. \label{eq:alphaveesigleq0}  
\end{equation}
\end{lemma}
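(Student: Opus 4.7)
The plan is to reduce to the case $\sigma \notin S$ with $\alpha \in \supp(\sigma)$, then work row by row through Table~\ref{table:scspher}. The key preliminary observation is that both alternatives for $\beta$ imply $\langle\alpha^\vee, \gamma\rangle \in 2\Z$ for every $\gamma \in \Z\wm$: in case 1 this is exactly condition~(\ref{item:inasr5}) of Proposition~\ref{prop:adapsr}, and in case 2 the hypothesis $|a(\alpha)|=1$ forces the unique element of $a(\alpha)$ to be a fixed point of the involution $\delta \mapsto \alpha^\vee|_{\Z\wm}-\delta$ (which swaps the two defining options and so preserves $a(\alpha)$), hence to equal $\tfrac{1}{2}\alpha^\vee|_{\Z\wm}$; this element's membership in $(\Z\wm)^*$ is exactly the required evenness. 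Combined with $\sigma \in \Z\wm$ from Proposition~\ref{prop:adapsr}(\ref{item:inasr1}), this yields $\langle\alpha^\vee,\sigma\rangle\in 2\Z$.

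Two easy reductions follow. If $\sigma \in S$, then $\sigma \neq \alpha$---in case 1 because $\alpha \notin \Z\wm$ but $\sigma \in \Z\wm$, in case 2 because $\sigma \neq \beta = \alpha$---and so $\langle\alpha^\vee, \sigma\rangle \leq 0$ by non-positivity of the Cartan pairing between distinct simple roots. If $\sigma \notin S$ and $\alpha \notin \supp(\sigma)$, then $\langle\alpha^\vee, \sigma\rangle$ is a non-negative combination of the non-positive numbers $\langle\alpha^\vee, \gamma\rangle$ for $\gamma \in \supp(\sigma)$, hence non-positive.

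For the remaining case $\sigma \notin S$ with $\alpha \in \supp(\sigma)$, I plan to compute $\langle\alpha^\vee,\sigma\rangle$ for each support type listed in Table~\ref{table:scspher}. Several rows give an odd value (equal to $1$) for every $\alpha \in \supp(\sigma)$ with positive pairing---namely types $\sA_n$ with $n\ge 2$, the $\sB_n$ root $\alpha_1+\cdots+\alpha_n$, the $\sC_n$ roots, $\sF_4$, and the $\sG_2$ root $\alpha_1+\alpha_2$---so the evenness established above delivers the contradiction. The type $\sA_1 \times \sA_1$ needs a separate argument: writing $\sigma = \alpha + \alpha'$ with $\alpha \perp \alpha'$, condition~(\ref{nsorths}) of Proposition~\ref{prop:adapsr} gives $\alpha^\vee = \alpha'^\vee$ on $\Z\wm$; evaluating on $\alpha \in \Z\wm$ in case 2 yields the contradiction $2=0$, while in case 1 evaluating on $\alpha' - \alpha = \sigma - 2\alpha \in \Z\wm$ yields $-2=2$.

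In each of the remaining ``even'' rows---the $\sA_3$ root $\alpha_1+2\alpha_2+\alpha_3$, the $\sB_n$ root $2(\alpha_1+\cdots+\alpha_n)$, the $\sB_3$ root $\alpha_1+2\alpha_2+3\alpha_3$, the $\sD_n$ root $2(\alpha_1+\cdots+\alpha_{n-2})+\alpha_{n-1}+\alpha_n$, and the $\sG_2$ root $4\alpha_1+2\alpha_2$---the compatibility criterion of Remark~\ref{rem:spherical-systems}(1) applied to $\sigma$ forces at least one simple root $\alpha' \in \supp(\sigma)\setminus\{\alpha\}$ that is adjacent to $\alpha$ in the Dynkin diagram to belong to $S^p(\wm)$. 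Then $\langle\alpha'^\vee, \gamma\rangle = 0$ for every $\gamma \in \wm$, and by linearity for every $\gamma \in \Z\wm$; but $2\alpha \in \Z\wm$ (it equals $\beta$ in case 1 and $2\beta$ in case 2), while $\langle\alpha'^\vee, 2\alpha\rangle$ is twice the nonzero Cartan integer $\langle\alpha'^\vee,\alpha\rangle$, the desired contradiction. The principal obstacle will be the careful case-by-case verification that Remark~\ref{rem:spherical-systems}(1) really delivers such an adjacent $\alpha'$ in $S^p(\wm)$ for each of these rows; in particular this requires matching Bourbaki's labelling of each Dynkin type to the compatibility rules, most delicately for the $\sG_2$ row.
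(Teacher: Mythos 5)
Your strategy is sound and is essentially the one the paper follows: the parity consequence $\langle\alpha^{\vee},\gamma\rangle\in 2\Z$ for $\gamma\in\Z\wm$ (your fixed-point argument for the case $|a(\alpha)|=1$ is exactly why $a(\alpha)=\{\tfrac12\alpha^{\vee}|_{\Z\wm}\}$), the reduction to $\alpha\in\supp(\sigma)$, and a row-by-row run through Table~\ref{table:scspher} using either parity or compatibility with $S^p(\wm)$. Your parity treatment of the rows of type $\sA_n$ ($n\geq 2$), $\sC_n$ and $\sF_4$ is even a little shorter than the paper's compatibility argument for those rows, and your $\sA_1\times\sA_1$ argument is correct. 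One small imprecision in the ``even'' rows: the adjacent simple root in $S^p(\wm)$ you invoke does not exist for \emph{every} $\alpha\in\supp(\sigma)$ (e.g.\ for $\sigma=\alpha_1+2\alpha_2+\alpha_3$ of type $\sA_3$ and $\alpha=\alpha_1$, the only neighbour $\alpha_2$ cannot lie in $S^p(\wm)$), but it does exist for the unique support root with $\langle\alpha^{\vee},\sigma\rangle>0$, which is the only one that needs an argument; so this part goes through once stated with that restriction.

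There is, however, a missing case: the row $\sigma=2\alpha'$ with support of type $\sA_1$ (a doubled simple root) does not appear in your list, and when $\alpha'=\alpha$ neither of your two mechanisms can dispose of it: $\langle\alpha^{\vee},2\alpha\rangle=4$ is even, so parity is silent, and the support is a single vertex, so there is no adjacent root forced into $S^p(\wm)$ (the compatibility set is empty). What is needed is to rule out $\sigma=2\alpha$ altogether: if $\beta=2\alpha$ this is exactly the hypothesis $\sigma\neq\beta$; if $\beta=\alpha$ with $|a(\alpha)|=1$, then $\alpha\in\Z\wm$ by Proposition~\ref{prop:adapsr}(\ref{item:inasr1}), whereas $2\alpha\in\Sigma^{sc}(\wm)$ would force $\alpha\notin\Z\wm$ by Proposition~\ref{prop:adapsr}(\ref{item:inasr5}), a contradiction. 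This is precisely how the paper treats the case $\sigma\in 2\sr$; with that one-line addition your proof is complete and otherwise parallels the paper's.
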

\begin{proof}
We prove the lemma case-by-case for the different types of spherically closed spherical roots $\sigma$. For all but two of the types (the sperical root $\alpha_1+\alpha_2$ with support of type $\sB_2$ and $\alpha_1 + 2(\alpha_2 + \ldots + \alpha_{n-1})+\alpha_n$ with $\supp(\sigma)$ of type $C_n$), we do this by showing that 
\begin{equation}
\alpha \notin \supp(\sigma). \label{eq:alphanotinsupp}
\end{equation}
This implies \eqref{eq:alphaveesigleq0} because $\sigma$ is a linear combination with positive coefficients of the simple roots that make up $\supp(\sigma)$. 

We will use a few times that the hypotheses on $\beta$ imply that 
\begin{equation}
\<\alpha^{\vee}, \delta\> \in 2\Z \text{ for all $\delta \in \Z\wm$} \label{eq:alphavdeltaeven}. 
\end{equation}
If $\beta = 2\alpha$, then (\ref{eq:alphavdeltaeven}) is part of Proposition~\ref{prop:adapsr}(\ref{item:inasr5}). On the other hand, if $\beta = \alpha$ with $|a(\alpha)|=1$, then it follows form the definition of $a(\alpha)$ that $a(\alpha) = \{\frac{1}{2} \alpha^{\vee}|_{\Z\wm}\}$ and that $\frac{1}{2} \alpha^{\vee}|_{\Z\wm}$ takes integer values on $\Z\wm$. This means that (\ref{eq:alphavdeltaeven}) holds in this case as well. 

For some of the types of spherical roots, we will take advantage of the following consequence of the fact that $\beta \in \Z\wm$:
\begin{equation}
\<\alpha^{\vee}, \gamma\> = 0 \text{ for all $\gamma \in S^p(\wm)$}. \label{eq:spperp}
\end{equation}

We now proceed with the case-by-case verification. 
\begin{itemize}
\item[-] $\sigma \in \sr$: it is enough to show that $\sigma \neq \alpha$, since then \eqref{eq:alphanotinsupp} is trivial. If $\beta=\alpha$ then $
\sigma\neq \alpha$ holds by assumption. If $\beta = 2\alpha$, then $2\alpha \in \Sigma^{sc}(\wm)$ and therefore, by Proposition~\ref{prop:adapsr}(\ref{item:inasr5}), $\alpha \notin \Z\wm$, and in particular $\alpha \notin \Sigma^{sc}(\wm) \ni \sigma$. 

\item[-] $\sigma \in 2\sr$: it is enough to show that $\sigma \neq 2\alpha$. If $\beta = 2\alpha$ then this is true by assumption. If $\beta = \alpha$ then it follows from Proposition~\ref{prop:adapsr}(\ref{item:inasr1}) that $\alpha \in \Z\wm$ and then from (\ref{item:inasr5}) in the same Proposition that $2\alpha \notin \Sigma^{sc}(\wm)$.

\item[-] $\sigma = \alpha' + \beta'$ with $\supp(\sigma)$ of type $\sA_1 \times \sA_1$: since $\beta \in \Sigma^{sc}(\wm)$, Proposition~\ref{prop:adapsr}(\ref{item:inasr1}) implies that $\beta \in \Z\wm$. Because $\sigma \in \Sigma^{sc}(\wm)$ it then follows from Proposition~\ref{prop:adapsr}(\ref{nsorths}) that $\alpha \notin \supp(\sigma)=\{\alpha',\beta'\}$.  

\item[-] $\sigma =\alpha_1+\ldots+\alpha_n$ with $\supp(\sigma)$ of type $\sA_n$, $n\geq2$: since $\sigma$ is compatible with $S^p(\wm)$, the subset $\{\alpha_2, \alpha_3, \ldots, \alpha_{n-1}\}$ of $\supp(\sigma)$ belongs to $S^p(\wm)$. If $n\ge 3$, this implies, using~\eqref{eq:spperp}, that $\alpha \notin \supp(\sigma)$. We now consider the case $n=2$. Then $\supp(\sigma) = \{\alpha_1, \alpha_2\}$ and \(
\<\alpha_1^{\vee},\sigma\> =1 = \<\alpha_2^{\vee},\sigma\>. 
\)
Since $\sigma \in \Z\wm$, it follows from (\ref{eq:alphavdeltaeven}) that $\alpha \notin \supp(\sigma)$. 

\item[-] $\sigma = \alpha_1+\ldots+\alpha_n$ with $\supp(\sigma)$ of type $\mathsf B_n$, $n\geq 2$: for $n\ge 3$, the argument that $\alpha \notin \supp(\sigma)$ is the same as for the previous spherical root. When $n=2$, then $\alpha\not=\alpha_1 \in \supp(\sigma)$ by (\ref{eq:alphavdeltaeven}), since $\<\alpha_1^{\vee},\sigma\> =1$. If $\alpha = \alpha_2 \in \supp(\sigma)$, then $\<\alpha^{\vee}, \sigma\> = 0$ and so~\eqref{eq:alphaveesigleq0} holds.   
\item[-] $\sigma = \alpha_1 + \alpha_2$ with $\supp(\sigma)$ of type $\sG_2$: in this case $\alpha \notin \supp(\sigma) = \{\alpha_1, \alpha_2\}$ by (\ref{eq:alphavdeltaeven}) since $\<\alpha_1^{\vee}, \sigma\> = -1 \notin 2\Z$ and $\<\alpha_2^{\vee}, \sigma\>=1 \notin 2\Z$. 

\item[-] $\sigma = \alpha_1 + 2(\alpha_2 + \ldots + \alpha_{n-1})+\alpha_n$ with $\supp(\sigma)$ of type $C_n$, $n\geq 3$: it follows from the compatibility of $\sigma$ with $S^p(\wm)$ that $\{\alpha_3, \alpha_4, \ldots, \alpha_n\} \inn S^p(\wm) \cap \supp(\sigma)$. This implies, using (\ref{eq:spperp}), that either $\alpha \notin \supp(\sigma)$ or $\alpha  = \alpha_1 \in \supp(\sigma)$. Since $\<\alpha_1^{\vee}, \sigma\>=0$, equation \eqref{eq:alphaveesigleq0} holds either way. 

\item[-] The remaining six types of spherically closed spherical roots are all handled in the same way: if $\sigma$ is of one of these types, then it follows from 
the compatibility of $\sigma$ with $S^p(\wm)$, for each type, that $S^p(\wm)$ contains all but one of the simple roots in $\supp(\sigma)$. It then easily follows that (\ref{eq:spperp}) cannot hold for any $\alpha \in \supp(\sigma)$. To be more precise:
\begin{itemize}
\item[$\cdot$] if $\sigma = \alpha_1 + 2\alpha_2 + \alpha_3$ with $\supp(\sigma)$ of type $\sA_3$, then $S^p(\wm) \cap \supp(\sigma)$ contains $\alpha_1$ and $\alpha_3$;  
\item[$\cdot$] if $\sigma = 2(\alpha_1 +\ldots + \alpha_n)$ with $\supp(\sigma)$ of type $\sB_n$ where $n\geq 2$, then $S^p(\wm) \cap \supp(\sigma)$ contains $\{\alpha_2, \alpha_3, \ldots, \alpha_n\}$;
\item[$\cdot$] if $\sigma = \alpha_1 + 2\alpha_2 + 3\alpha_3$ with $\supp(\sigma)$ of type $\sB_3$, then $S^p(\wm) \cap \supp(\sigma)$ contains $\alpha_1$ and $\alpha_2$;
\item[$\cdot$] if $\sigma = 2(\alpha_1 + \ldots + \alpha_{n-2})+\alpha_{n-1} + \alpha_n$ with $\supp(\sigma)$ of type $\sD_n$ where $n\geq 4$, then $S^p(\wm) \cap \supp(\sigma)$ contains $\{\alpha_2, \alpha_3, \ldots, \alpha_n\}$;
\item[$\cdot$] if $\sigma = \alpha_1 + 2\alpha_2 + 3\alpha_3+2\alpha_4$ with $\supp(\sigma)$ of type $\sF_4$, then $S^p(\wm) \cap \supp(\sigma)$ contains $\{\alpha_1,\alpha_2, \alpha_3\}$;
\item[$\cdot$] if $\sigma = 4\alpha_1+2\alpha_2$ with $\supp(\sigma)$ of type $\sG_2$, then $S^p(\wm) \cap \supp(\sigma)$ contains $\alpha_2$.
\end{itemize}
\end{itemize}
\end{proof}

\subsection{$G$-saturated weight monoids} \label{subsec:Gsat}
In this section, we will look at the combinatorics of affine spherical varieties with $G$-saturated weight monoids.

\begin{proposition} \label{prop:Gsatnosimadap}
If $\wm$ is a $G$-saturated submonoid of $\dw$, then $|a(\sigma)|=1$ for every $\sigma \in \Sigma^{sc}(\wm) \cap \sr$.
\end{proposition}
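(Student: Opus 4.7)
The plan is to use $G$-saturation to pin down the rays of the dual cone $\wm^\vee$, and then exploit the rigidity of the condition $\langle\delta,\sigma\rangle=1$ for $\sigma$ a simple root.

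First I would verify the equality
\[
\Q_{\ge 0}\wm \;=\; \Q\wm \,\cap\, \Q_{\ge 0}\dw
\]
inside $\wl\otimes\Q$. The inclusion $\subseteq$ is immediate from $\wm\subseteq\dw$. For $\supseteq$, any $x$ in the right-hand side can be written as $y/N$ with $y\in\Z\wm$ and $N$ a positive integer; then $y\in\Z\wm\cap\Q_{\ge 0}\dw=\Z\wm\cap\dw=\wm$ by $G$-saturation, so $x\in\Q_{\ge 0}\wm$. Consequently, inside $\Q\wm$ the cone $\Q_{\ge 0}\wm$ is cut out by the inequalities $\langle\alpha^\vee,\cdot\rangle\ge 0$ for $\alpha\in\sr$; since $\alpha^\vee|_{\Z\wm}=0$ exactly for $\alpha\in S^p(\wm)$, every facet of $\Q_{\ge 0}\wm$ is defined by $\langle\alpha^\vee,\cdot\rangle=0$ for some $\alpha\in\sr\setminus S^p(\wm)$. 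By duality, every element of $E(\wm)$ is a positive rational multiple of some $\alpha^\vee|_{\Z\wm}$ with $\alpha\in\sr\setminus S^p(\wm)$.

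Now fix $\sigma\in\Sigma^{sc}(\wm)\cap\sr$ and take any $\delta\in a(\sigma)$. By definition $\langle\delta,\sigma\rangle=1$, and either $\delta\in E(\wm)$ or $\sigma^\vee|_{\Z\wm}-\delta\in E(\wm)$. In the first case, write $\delta=c\,\alpha^\vee|_{\Z\wm}$ with $c\in\Q_{>0}$ and $\alpha\in\sr\setminus S^p(\wm)$. The equation $c\langle\alpha^\vee,\sigma\rangle=1$ forces $\langle\alpha^\vee,\sigma\rangle>0$, which for simple roots happens only when $\alpha=\sigma$; then $c=1/2$, so $\delta=\tfrac{1}{2}\sigma^\vee|_{\Z\wm}$. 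In the second case, $\langle\sigma^\vee|_{\Z\wm}-\delta,\sigma\rangle=2-1=1$, and the same argument applied to $\sigma^\vee|_{\Z\wm}-\delta$ yields $\sigma^\vee|_{\Z\wm}-\delta=\tfrac{1}{2}\sigma^\vee|_{\Z\wm}$, i.e.\ $\delta=\tfrac{1}{2}\sigma^\vee|_{\Z\wm}$ again.

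Thus $a(\sigma)\subseteq\{\tfrac{1}{2}\sigma^\vee|_{\Z\wm}\}$, so $|a(\sigma)|\le 1$. Since $\sigma$ is adapted to $\wm$, Proposition~\ref{prop:adapsr}(\ref{item:inasr4a}) gives $|a(\sigma)|\ge 1$, and we conclude $|a(\sigma)|=1$. I do not anticipate a serious obstacle; the only delicate step is the identification of the rays of $\wm^\vee$ under $G$-saturation, and that is a routine convex-geometric consequence of $\Z\wm\cap\dw=\wm$.
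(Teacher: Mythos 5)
Your proof is correct, and it takes a genuinely different route from the paper's. The paper argues by contraposition: assuming $|a(\sigma)|=2$, it picks $\delta\in E(\wm)$ with $a(\sigma)=\{\delta,\sigma^\vee|_{\Z\wm}-\delta\}$, finds $\gamma\in\wm$ with $\<\delta,\gamma\>=0$ and $\<\sigma^\vee,\gamma\>>0$ (possible because $\delta$ and $\sigma^\vee|_{\Z\wm}$ are not proportional), and checks that $\gamma'=2\gamma-\sigma$ is a dominant weight lying in $\Z\wm$ but not in $\wm$ (since $\<\delta,\gamma'\>=-1$), contradicting $G$-saturation. You instead prove the statement directly by first extracting a structural consequence of saturation: $\Q_{\ge 0}\wm=\Q\wm\cap\Q_{\ge 0}\dw$, so inside $\Q\wm$ the cone of $\wm$ is cut out by the restricted simple coroots, and hence every element of $E(\wm)$ is a positive rational multiple of some $\alpha^\vee|_{\Z\wm}$ with $\alpha\in\sr\setminus S^p(\wm)$; pairing with the simple root $\sigma$ then forces $a(\sigma)\subseteq\{\tfrac12\sigma^\vee|_{\Z\wm}\}$, and Proposition~\ref{prop:adapsr}(\ref{item:inasr4a}) supplies $|a(\sigma)|\ge 1$. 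Your route buys more than the statement asks for: an explicit description of $E(\wm)$ under $G$-saturation and the identification $a(\sigma)=\{\tfrac12\sigma^\vee|_{\Z\wm}\}$, which is in the spirit of the ``if'' direction of Proposition~\ref{prop:lunaXGsat}, where the colors of such varieties are shown to be multiples of simple coroots. The paper's contrapositive argument is more local and minimal: it needs no description of the facets of $\Q_{\ge 0}\wm$, only one explicit weight $2\gamma-\sigma$ witnessing the failure of saturation. The convex-geometric step you flag as the only delicate point (facets of $\Q_{\ge 0}\wm$ are coroot hyperplanes, hence rays of $\wm^\vee$ are spanned by restricted coroots) is indeed routine and correct as you argue it.
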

\begin{proof}
We will prove the contrapositive. Recall from Proposition~\ref{prop:adapsr} that $a(\sigma)$ has one or two elements for every $\sigma \in \Sigma^{sc}(\wm) \cap \sr$.   Let $\sigma\in \Sigma^{sc}(\wm)\cap S$ with $|a(\sigma)|=2$. We will show that there then exists a dominant weight $\gamma'$ in $\Z \wm$ with $\gamma' \not\in \wm$. 

Let $\delta\in E(\wm)$ such that $a(\sigma)=\{\delta, \sigma^\vee|_{\Z\wm} - \delta\}$. It follows from $|a(\sigma)|=2$ that $\delta$ and $\sigma^{\vee}|_{\Z\wm}$ are not proportional: if they were, then $\<\delta,\sigma\> = 1 = \<\sigma^{\vee}|_{\Z\wm}-\delta, \sigma\>$ would imply $|a(\sigma)|=1$. 

Since $\delta \in E(\wm)$, the set $\{\gamma \in \wm \colon \<\delta,\gamma\> = 0\}$ spans $\ker \delta$, which is a sublattice of corank $1$ of $\Z\wm$. As $\delta$ and $\sigma^{\vee}|_{\Z\wm}$ are not proportional and therefore have different kernels, this implies that there exists $\gamma \in \wm$ such that $\<\delta, \gamma\> = 0$ and $\<\sigma^{\vee}, \gamma\> \neq 0$. Then $\<\sigma^{\vee}, \gamma\> \geq 0$ since $\gamma$ is a dominant weight. 

Let $\gamma':= 2\gamma - \sigma$. Clearly $\gamma' \in \Z\wm$. Moreover, $\<\alpha^{\vee}, \gamma'\> \ge 0$ for all $\alpha \in S\setminus\{\sigma\}$ since $\gamma$ is dominant and $\<\alpha^{\vee},\sigma\> \le 0$. Furthermore $\<\sigma^{\vee}, \gamma'\> = \<\sigma^{\vee}, 2\gamma\> - \<\sigma^{\vee}, \sigma\> \ge 2-2 = 0$. Consequently, $\gamma'$ is a dominant weight. 

On the other hand $\<\delta, \gamma'\> = 2\<\delta,\gamma\> - \<\delta,\sigma\> = -1$ which implies that $\gamma' \notin \wm$. This proves that $\wm$ is not $G$-saturated. 
\end{proof}

\begin{corollary} \label{cor:allsimdoub_msirr}
If $|a(\alpha)|=1$ for all $\alpha \in \Sigma^{sc}(\wm) \cap \sr$, then 
 $\Sigma^{sc}(\wm)$ is adapted to $\wm$. In particular, $\Sigma^{sc}(\wm)$ is adapted to $\wm$ if $\wm$ is $G$-saturated. 
\end{corollary}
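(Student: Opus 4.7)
The plan is to reduce to the combinatorial criterion given by Proposition~\ref{prop:luna-adapt}: we need to show that $\Sigma := \Sigma^{sc}(\wm)$ satisfies the two conditions (\ref{adapt1}) and (\ref{adapt2}) of that proposition. Condition (\ref{adapt1}) is tautological: $\Sigma^{sc}(\wm) \subseteq \Sigma^{sc}(\wm)$. So the entire content is condition (\ref{adapt2}), and this is where the hypothesis $|a(\alpha)|=1$ for all $\alpha \in \Sigma^{sc}(\wm) \cap \sr$ must be used.

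To verify (\ref{adapt2}), I would take $\alpha \in \sr \cap \Sigma^{sc}(\wm)$, $\delta \in a(\alpha)$, and $\gamma \in \Sigma^{sc}(\wm)$ with $\langle \delta, \gamma \rangle > 0$. By hypothesis $|a(\alpha)| = 1$, and (as noted in the proof of Lemma~\ref{lem:coroot_nonpos}) the definition of $a(\alpha)$ forces $a(\alpha) = \{\tfrac{1}{2}\alpha^\vee|_{\Z\wm}\}$; in particular $\delta = \tfrac{1}{2}\alpha^\vee|_{\Z\wm}$, so $\langle \delta, \gamma \rangle > 0$ is equivalent to $\langle \alpha^\vee, \gamma \rangle > 0$. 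Now apply Lemma~\ref{lem:coroot_nonpos} with the choice $\beta = \alpha$ (which satisfies the hypothesis $|a(\alpha)|=1$): it yields $\langle \alpha^\vee, \sigma \rangle \leq 0$ for every $\sigma \in \Sigma^{sc}(\wm)$ with $\sigma \neq \alpha$. Hence the inequality $\langle \alpha^\vee, \gamma \rangle > 0$ forces $\gamma = \alpha$. Then trivially $\gamma \in \sr$ and $\delta \in a(\alpha) = a(\gamma)$, proving (\ref{adapt2}).

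For the ``in particular'' clause, I would simply combine the first statement with Proposition~\ref{prop:Gsatnosimadap}, which guarantees that whenever $\wm$ is $G$-saturated, every $\sigma \in \Sigma^{sc}(\wm) \cap \sr$ satisfies $|a(\sigma)| = 1$; the hypothesis of the first statement is then met, and the conclusion follows.

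I do not expect a real obstacle here: the argument is essentially a bookkeeping application of Proposition~\ref{prop:luna-adapt}, and the only nontrivial ingredient (Lemma~\ref{lem:coroot_nonpos}) has already been established. The only small thing to watch is that $a(\alpha)$ really equals $\{\tfrac{1}{2}\alpha^\vee|_{\Z\wm}\}$ when $|a(\alpha)|=1$, which follows directly from the definition of $a(\alpha)$ together with the observation that if $\delta \in a(\alpha)$ then $\alpha^\vee|_{\Z\wm} - \delta \in a(\alpha)$ as well, so the two must coincide when the set is a singleton.
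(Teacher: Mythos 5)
Your proposal is correct and follows exactly the paper's own argument: the paper also deduces condition (\ref{adapt2}) of Proposition~\ref{prop:luna-adapt} from Lemma~\ref{lem:coroot_nonpos} (noting it is then "trivially met") and obtains the $G$-saturated case from Proposition~\ref{prop:Gsatnosimadap}. Your expansion of why $|a(\alpha)|=1$ forces $a(\alpha)=\{\tfrac12\alpha^\vee|_{\Z\wm}\}$ and why the lemma then forces $\gamma=\alpha$ is a faithful filling-in of the details the paper leaves implicit.
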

\begin{proof}
The first assertion follows from  Proposition~\ref{prop:luna-adapt} and Lemma~\ref{lem:coroot_nonpos}; indeed, the Lemma implies that condition (\ref{adapt2}) of the Proposition is trivially met. The second assertion follows from the first, by Proposition~\ref{prop:Gsatnosimadap}. 
\end{proof}

\begin{corollary} \label{cor:ms_sat_irr}
If $\wm$ is a $G$-saturated submonoid of $\dw$, then 
\begin{enumerate}[(a)]
\item $\ms$ is irreducible; \label{cor:ms_sat_irr_a}
\item up to $G$-equivariant isomorphism, there is exactly one \emph{generic} affine spherical $G$-variety $X_{\wm}$ with weight monoid $\wm$; \label{cor:ms_sat_irr_b}
\item $\Sigma^{sc}(X_{\wm}) = \Sigma^{sc}(\wm)$. \label{cor:ms_sat_irr_c}
\end{enumerate}
\end{corollary}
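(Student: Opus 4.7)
Part (a) is essentially immediate from the infrastructure already built. The ``in particular'' statement of Corollary~\ref{cor:allsimdoub_msirr} tells us that $\Sigma^{sc}(\wm)$ is adapted to $\wm$ precisely because $\wm$ is $G$-saturated, and Corollary~\ref{cor:msirreducible} then yields that $\ms$ is irreducible.

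For part (b), I would apply Proposition~\ref{prop:denseTorb_irrcomp} to the unique irreducible component of $\ms$, namely $\ms$ itself. This produces a unique dense $T$-orbit, which I would call $T\cdot[X_{\wm}]$. Since ``generic'' means exactly that $T\cdot[X]$ is open in $\ms$, and since \cite[Theorem 1.12]{alexeev&brion-modaff} identifies $T$-orbits on $\ms$ with $G$-isomorphism classes of affine spherical $G$-varieties of weight monoid $\wm$, this yields both the existence and the uniqueness (up to $G$-equivariant isomorphism) of $X_{\wm}$.

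For part (c), the two inclusions come from different halves of Proposition~\ref{prop:orbincl}. One direction, $\Sigma^{sc}(X_{\wm}) \subseteq \Sigma^{sc}(\wm)$, is Proposition~\ref{prop:orbincl}(\ref{orbincl_item3}) applied to $X_{\wm}$. For the reverse direction, I would invoke adaptedness of $\Sigma^{sc}(\wm)$ (already established in the proof of (a)) to choose, via Definition~\ref{def:adapted}, an affine spherical $G$-variety $Y$ with $\wm(Y) = \wm$ and $\Sigma^{sc}(Y) = \Sigma^{sc}(\wm)$. Since $\overline{T\cdot[X_{\wm}]} = \ms$ by the construction in (b), we have $T\cdot[Y] \subseteq \overline{T\cdot[X_{\wm}]}$, and then Proposition~\ref{prop:orbincl}(\ref{orbincl_item1}) forces $\Sigma^{sc}(Y) \subseteq \Sigma^{sc}(X_{\wm})$, closing the argument.

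There is no substantial obstacle here: all the heavy lifting sits in the earlier results, particularly Proposition~\ref{prop:Gsatnosimadap} and Lemma~\ref{lem:coroot_nonpos}, which together drive Corollary~\ref{cor:allsimdoub_msirr}. The only thing that merits care is the bookkeeping needed to use (a) when proving (b), and then both (a) and (b) when proving (c).
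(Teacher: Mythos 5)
Your argument is correct and follows essentially the same route as the paper's proof: part (a) from Corollaries~\ref{cor:allsimdoub_msirr} and \ref{cor:msirreducible}, part (b) from (a) together with Proposition~\ref{prop:denseTorb_irrcomp} and \cite[Theorem 1.12]{alexeev&brion-modaff}, and part (c) from Proposition~\ref{prop:orbincl}. You simply spell out the details of (c) that the paper leaves implicit, and your bookkeeping there is accurate.
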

\begin{proof}
Assertion (\ref{cor:ms_sat_irr_a}) follows from Corollaries \ref{cor:msirreducible} and \ref{cor:allsimdoub_msirr}. Assertion (\ref{cor:ms_sat_irr_b}) follows from (\ref{cor:ms_sat_irr_a}) and Proposition~\ref{prop:denseTorb_irrcomp}. Assertion (\ref{cor:ms_sat_irr_c}), finally, is a consequence of Proposition~\ref{prop:orbincl}. 
\end{proof}

We now  give the proof of the Proposition~\ref{prop:adapnsphroots} on page~\pageref{prop:adapnsphroots}. Before doing so, we recall the following Definition from \cite{msfwm}. 

\begin{definition}\label{def:Nadapted}
Let $\wm$ be a normal submonoid of $\dw$. We say that an element $\sigma$ of $\Sigma^{sc}(G)$ is  \textbf{N-adapted to $\wm$} 
if there exists an affine spherical variety $X$ such that $\wm(X)=\wm$ and $\Sigma^{N}(X)=\{\sigma\}$. We use $\Sigma^{N}(\wm)$ for the set of all $\sigma \in \Sigma^{sc}(G)$ that are N-adapted to $\wm$.  
\end{definition}

\begin{proof}[Proof of Proposition~\ref{prop:adapnsphroots}] \label{proofpropadapn}
This is a special case of \cite[Corollary 2.17]{msfwm}. Since $\wm$ is $G$-saturated, condition (4) of that Corollary is equivalent to (\ref{item:osignotsimple}) of Proposition~\ref{prop:adapnsphroots}, by Proposition~\ref{prop:Gsatnosimadap}. Condition (3) of the Corollary is redundant, by the Definition~\ref{def:G-saturated} of a $G$-saturated weight monoid. Conditions (1), (2), (5) and (6) of the Corollary are identical to conditions (\ref{item:osiginlat}), (\ref{compat}), (\ref{item:corooteven}) and (\ref{item:osigorthosum}), respectively, of  Proposition~\ref{prop:adapnsphroots}.      
\end{proof}

\begin{proposition} \label{prop:Nspherroots_Xwm}
If $\wm$ is a $G$-saturated submonoid of $\dw$ and $X_{\wm}$ is as in Corollary~\ref{cor:ms_sat_irr}(\ref{cor:ms_sat_irr_b}), then $\Sigma^N(X_{\wm}) = \Sigma^N(\wm)$. 
\end{proposition}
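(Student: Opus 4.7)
The plan is to deduce the statement from Corollary~\ref{cor:ms_sat_irr}(\ref{cor:ms_sat_irr_c}), which gives $\Sigma^{sc}(X_\wm) = \Sigma^{sc}(\wm)$, together with a comparison between $\Sigma^{sc}(Y)$ and $\Sigma^N(Y)$ for every affine spherical $G$-variety $Y$ with $\wm(Y)=\wm$.

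The key auxiliary statement I would establish is the following: for \emph{any} affine spherical $G$-variety $Y$ with $\wm(Y)=\wm$, the set $\Sigma^N(Y)$ is obtained from $\Sigma^{sc}(Y)$ by replacing every simple root $\alpha \in \Sigma^{sc}(Y) \cap \sr$ by $2\alpha$ and leaving every other element fixed. Comparing the doubling rules of Proposition~\ref{prop:doubling_sr} for $\Sigma^{sc}$ and for $\Sigma^N$ reduces this to verifying that condition (\ref{prop:doubling_sr:A}) of that proposition, i.e.\ $\rho_Y(D_\sigma^+)=\rho_Y(D_\sigma^-)$, is automatically satisfied for every $\sigma \in \Sigma^{sc}(Y) \cap \sr$. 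For such $\sigma$, Lemma~\ref{lem:a_alpha} identifies $\{c_Y(D_\sigma^+,\cdot), c_Y(D_\sigma^-,\cdot)\}$ with the combinatorial set $a(\sigma)$ determined by $\wm$, and since $\wm$ is $G$-saturated Proposition~\ref{prop:Gsatnosimadap} forces $|a(\sigma)|=1$, yielding the desired equality of $\rho_Y$-values on $\lat(Y)=\Z\wm$.

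Applying the auxiliary statement to $Y=X_\wm$ together with Corollary~\ref{cor:ms_sat_irr}(\ref{cor:ms_sat_irr_c}) yields
\[
\Sigma^N(X_\wm) = \{2\alpha : \alpha \in \Sigma^{sc}(\wm) \cap \sr\} \cup (\Sigma^{sc}(\wm) \setminus \sr).
\]
For the inclusion $\Sigma^N(X_\wm) \subseteq \Sigma^N(\wm)$, I would take $\tau$ in the right-hand side and invoke Proposition~\ref{prop:orbincl}(\ref{orbincl_item2}) to produce an affine spherical $Y$ with $\wm(Y)=\wm$ and $\Sigma^{sc}(Y)$ equal to either $\{\tau\}$ (if $\tau\notin\sr$) or $\{\tau/2\}$ (if $\tau=2\alpha$ with $\alpha\in\sr$). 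In both cases the auxiliary statement gives $\Sigma^N(Y)=\{\tau\}$, whence $\tau \in \Sigma^N(\wm)$ by Definition~\ref{def:Nadapted}.

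Conversely, given $\tau \in \Sigma^N(\wm)$, pick $Y$ with $\wm(Y)=\wm$ and $\Sigma^N(Y)=\{\tau\}$. Since $|\Sigma^{sc}(Y)|=|\Sigma^N(Y)|=1$ by Proposition~\ref{prop:doubling_sr}, we have $\Sigma^{sc}(Y)=\{\sigma^{sc}\}$ for a unique $\sigma^{sc} \in \Sigma^{sc}(\wm)$. The auxiliary statement then identifies $\tau$ as $\sigma^{sc}$ when $\sigma^{sc}\notin \sr$ and as $2\sigma^{sc}$ when $\sigma^{sc}\in\sr$, placing $\tau$ in $\Sigma^N(X_\wm)$ in either case. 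The main point requiring care is keeping the conditions (\ref{prop:doubling_sr:A})--(\ref{prop:doubling_sr:root}) of Proposition~\ref{prop:doubling_sr} straight, but since (\ref{prop:doubling_sr:A}) is the only one concerning simple roots and the remaining three pertain only to non-simple or non-lattice elements, the case analysis is disjoint and the argument goes through cleanly.
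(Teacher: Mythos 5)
Your argument is correct, and it shares the paper's skeleton --- Corollary~\ref{cor:ms_sat_irr}(\ref{cor:ms_sat_irr_c}) to identify $\Sigma^{sc}(X_\wm)$ with $\Sigma^{sc}(\wm)$, Losev's doubling rules (Proposition~\ref{prop:doubling_sr}), and the $G$-saturation input $|a(\alpha)|=1$ --- but the second half is executed differently. The paper's proof is a one-line comparison of the two combinatorial characterizations: it matches the description of $\Sigma^{sc}(\wm)$ in Proposition~\ref{prop:adapsr} against the description of $\Sigma^{N}(\wm)$ in Proposition~\ref{prop:adapnsphroots} (itself imported from \cite[Corollary 2.17]{msfwm}), with the doubling of Proposition~\ref{prop:doubling_sr} mediating between them. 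You instead bypass Proposition~\ref{prop:adapnsphroots} entirely and work straight from Definition~\ref{def:Nadapted}: your auxiliary statement (for $G$-saturated $\wm$, every affine spherical $Y$ with $\wm(Y)=\wm$ has $\Sigma^{N}(Y)$ equal to $\Sigma^{sc}(Y)$ with simple roots doubled) is exactly right, since conditions (\ref{prop:doubling_sr:B})--(\ref{prop:doubling_sr:root}) of Proposition~\ref{prop:doubling_sr} never apply to a simple root, and condition (\ref{prop:doubling_sr:A}) is forced by Lemma~\ref{lem:a_alpha} together with Proposition~\ref{prop:Gsatnosimadap} (note that applying the latter uses $\Sigma^{sc}(Y)\subseteq\Sigma^{sc}(\wm)$, i.e.\ Proposition~\ref{prop:orbincl}(\ref{orbincl_item3}) --- a detail worth making explicit). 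The two inclusions then follow from Proposition~\ref{prop:orbincl}(\ref{orbincl_item2})--(\ref{orbincl_item3}) applied to rank-one degenerations, as you do. What each approach buys: the paper's proof is shorter because Proposition~\ref{prop:adapnsphroots} is already available, whereas yours is more self-contained within the paper, trading the citation of the characterization of N-adapted roots for the explicit geometric/degeneration argument; both ultimately rest on the same two pillars, Losev's doubling theorem and the fact that $G$-saturation kills the two-color case.
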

\begin{proof}
The proposition follows from Corollary~\ref{cor:ms_sat_irr}(\ref{cor:ms_sat_irr_c}) and Proposition~\ref{prop:doubling_sr} by comparing Propositions \ref{prop:adapsr} and \ref{prop:adapnsphroots}. 
\end{proof}

We end this section with a proposition formulated by Luna in his aforementioned 2005 working document. Together with equation~\eqref{eq:a_alpha_meaning} it gives a geometric characterization of the affine spherical $G$-varieties with $G$-saturated weight monoid. 
\begin{proposition}[Luna] \label{prop:lunaXGsat}
Let $X$ be an affine spherical $G$-variety with open $G$-orbit $X_o$. The weight monoid $\wm(X)$ of $X$ is $G$-saturated if and only if the following two conditions hold:
\begin{enumerate}[(1)]
\item $\mathrm{codim}_X (X\setminus X_o) \ge 2$;\label{eq:gsatlunacodim}
\item $|a(\sigma)|=1$ for every $\sigma \in \Sigma^{sc}(X) \cap \sr$. \label{eq:gsatluna}
\end{enumerate}
\end{proposition}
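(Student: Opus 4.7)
I would prove the two implications separately; throughout, set $\wm = \wm(X)$ and let $X_o$ denote the open $G$-orbit of $X$.

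\emph{Forward direction.} Condition~(\ref{eq:gsatluna}) is immediate from Proposition~\ref{prop:orbincl}(\ref{orbincl_item3}), which gives $\Sigma^{sc}(X)\subseteq\Sigma^{sc}(\wm)$, combined with Proposition~\ref{prop:Gsatnosimadap}, which asserts $|a(\sigma)|=1$ for every $\sigma\in\Sigma^{sc}(\wm)\cap S$. For condition~(\ref{eq:gsatlunacodim}) I would compare $X$ and $X_o$: restriction of functions gives $\wm\subseteq\wm(X_o)$, and both monoids generate the same lattice $\lat(X)=\lat(X_o)$. Hence
\[
\wm \subseteq \wm(X_o) \subseteq \Z\wm(X_o)\cap\dw = \Z\wm\cap\dw = \wm,
\]
so $\wm = \wm(X_o)$, i.e.\ $\C[X] = \C[X_o]$ as $G$-modules. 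Since $X$ is normal and affine, this equality is equivalent to $\codim_X(X\setminus X_o)\geq 2$.

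\emph{Converse direction.} Assume (\ref{eq:gsatlunacodim}) and (\ref{eq:gsatluna}). Normality of $\wm$ gives $\wm = \Z\wm \cap \wm^\vee$, so it suffices to verify that $\langle\delta,\gamma\rangle\ge 0$ for every $\gamma\in\Z\wm\cap\dw$ and every $\delta\in E(\wm)$. Each $\delta$ generates an extremal ray of $\wm^\vee$, so it is a positive rational multiple of $\rho_X(D)$ for some $B$-stable prime divisor $D$ of $X$. Condition~(\ref{eq:gsatlunacodim}) excludes $G$-stable divisors, so $D$ is a color of $X_o$ and $\rho_X(D)=c(D,\cdot)$ with $c$ the full Cartan pairing of~\eqref{eq:4}. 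Inspecting the three types of colors in Definition~\ref{def:colors_sph_system}: if $D\in\col^{b}$ then $c(D,\cdot)=\alpha^\vee|_{\Z\wm}$, and if $D\in\col^{2a}$ then $c(D,\cdot)=\tfrac{1}{2}\alpha^\vee|_{\Z\wm}$, in both cases non-negative on $\dw$. If $D\in\col^{a}$ with $D\in\A(\alpha)$ and $\alpha\in\Sigma^{sc}(X)\cap S$, then Lemma~\ref{lem:a_alpha} and condition~(\ref{eq:gsatluna}) imply $c(D_\alpha^+,\cdot)=c(D_\alpha^-,\cdot)$, and axiom~(A2) forces $c(D,\cdot)=\tfrac{1}{2}\alpha^\vee|_{\Z\wm}$, again non-negative on $\dw$. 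This yields $\langle\delta,\gamma\rangle\ge 0$ as desired.

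The main obstacle is the case $D\in\col^{a}$ in the converse direction: without condition~(\ref{eq:gsatluna}), one only has $c(D_\alpha^+,\cdot)+c(D_\alpha^-,\cdot)=\alpha^\vee|_{\Z\wm}$, which is non-negative on $\dw$, while the individual pairings may take negative values there. The Luna invariants $a(\alpha)$ made explicit in~\eqref{eq:a_alpha_meaning} are precisely what permits upgrading this sum-positivity to individual positivity when $|a(\alpha)|=1$. The remainder of the argument is a routine translation between the spherical-system data of $X$ and the defining inequalities of $\wm\subseteq\dw$.
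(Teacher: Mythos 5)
Your proof is correct and takes essentially the same route as the paper's: condition (\ref{eq:gsatluna}) via Propositions~\ref{prop:orbincl} and \ref{prop:Gsatnosimadap}, condition (\ref{eq:gsatlunacodim}) via $\wm(X)=\wm(X_o)$, and the converse by observing that, by normality, $\wm(X)$ is cut out inside $\Z\wm(X)$ by the valuations of the $B$-stable prime divisors, which conditions (\ref{eq:gsatlunacodim}) and (\ref{eq:gsatluna}) force to be restrictions of simple coroots or their halves. The only difference is that the paper proves explicitly (via a function $g^n/f$ with poles only along a hypothetical divisor in $X\setminus X_o$) the step you cite as standard, namely that $\C[X]=\C[X_o]$ forces $\codim_X(X\setminus X_o)\ge 2$ when $X$ is normal and affine.
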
 
\begin{proof}
We begin with the ``only if'' part, so assume that $\wm(X)$ is $G$-saturated. Part (\ref{eq:gsatluna}) follows from Proposition~\ref{prop:Gsatnosimadap}, because $\Sigma^{sc}(X)\inn\Sigma^{sc}(\wm(X))$ by Proposition~\ref{prop:orbincl}. Let us show part (\ref{eq:gsatlunacodim}). The weight monoid $\wm(X_o)$ of the quasi-affine spherical $G$-variety $X_o$ is defined just like for an affine spherical variety. Since $X_o$ is a dense $G$-stable subset of $X$, we have the inclusions
\[
\wm(X)\inn\wm(X_o)\inn \Z\wm(X)\cap \dw,
\]
which together with the fact that $\wm(X)$ is $G$-saturated yield $\wm(X)=\wm(X_o)$. Since the coordinate ring $\C[X_o]$ is a multiplicity free $G$-module, this implies that $\C[X]=\C[X_o]$. If now $D$ is an irreducible subvariety of $X\setminus X_o$ of codimension $1$, then there exist two regular functions $f,g\in\C[X]$ such that $f$ vanishes on $D$, and $g$ vanishes on all irreducible components of $\{f=0\}$ except $D$. It follows that we can find a positive integer $n$ such that the quotient $g^n/f$ has no poles on $X$ except for $D$. Since $X_o$ is normal, because it's open in the normal variety $X$, this implies that $g^n/f\in\C[X_o]\setminus \C[X]$: contradiction. This proves part (\ref{eq:gsatlunacodim}). 

We prove the ``if'' part. It follows from the normality of $X$ that the monoid $\wm(X)$ is the subset of $\Z\wm(X)$ where the valuations of all colors and all $G$-stable prime divisors of $X$ take non-negative values. By condition (\ref{eq:gsatlunacodim}) we have that $X$ has no $G$-stable prime divisors. By condition (\ref{eq:gsatluna}) we know that the valuations of all colors of $X$ moved by simple roots in $\Sigma^{sc}(X)$ are actually multiples of some simple coroot of $G$. This implies that all colors of $X$ are multiples of simple coroots. Consequently $\wm(X)$ is the set of all elements of $\Z\wm(X)$ on which some simple coroots take non-negative values. This shows that $\Z\wm(X) \cap \dw \inn \wm(X)$, which implies that $\wm(X)$ is $G$-saturated. 
\end{proof}

\begin{remark}
By Proposition~\ref{prop:doubling_sr}, condition~(\ref{eq:gsatluna}) in Proposition~\ref{prop:lunaXGsat} is equivalent to 
\[\Sigma^N(X) \cap \sr = \emptyset \]
\end{remark}

\section{Camus's smoothness criterion} \label{sec:camus}
We report in this section a smoothness criterion for spherical varieties due to R.\ Camus~\cite{camus}, with a complete exposition of its original proof. 

In this section, if $A$ is any algebraic group, we denote by $\Chi(A)$ the group of its characters and we will use $A^r$ for its radical. Recall that when $G$ is a connected reductive group, $G^r$ is the connected component of the center of $G$ containing the identity. 

\begin{definition} \label{def:simple_quasivec_socle}
Let $X$ be a spherical $G$-variety.
\begin{enumerate}
\item If $X$ has a unique closed $G$-orbit then we say that $X$ is \textbf{simple}. A simple spherical variety is \textbf{quasi-vectorial} if all its colors contain the closed $G$-orbit.
\item If $X$ is simple, we denote by $\V_X$ the set of $G$-stable prime divisors of $X$, by $\col_X$ the set of the colors containing the closed $G$-orbit, and we set $\D_X=\col_X\cup \V_X$.
\item If $X$ is a simple spherical variety, its \textbf{socle} is
\[
\soc(X):=(S^p(X), \Sigma^{sc}(X), \A(X), \col_X, \V_X, \rho'_X\colon\D_X\to (\Z\Sigma^{sc}(X))^*)
\]
where $\rho'_X(D):=\rho_X(D)|_{\Z\Sigma^{sc}(X)}$ for all $D\in\D_X$.
\item Equality of socles of two different varieties is defined as equality on the components $S^p$ and $\Sigma^{sc}$, and bijections on the components $\A$, $\col$, $\V$ compatibly with the maps $\rho'$.
\end{enumerate}
\end{definition}

Notice that an affine spherical variety $X$ is simple, since $\CC[X]^G=\CC$ and therefore $X\quot G=\Spec\CC[X]^G$ is a single point.

Let $G_i$ be a connected reductive group and $X_i$ a simple spherical $G_i$-variety for all $i\in \{1,2\}$. Then the socle of the $G_1\times G_2$-variety $X_1\times X_2$ is $\soc(X_1)\times\soc(X_2)$, where the \textbf{direct product of two socles} is defined as the union of the two factors on the first five components, and defined accordingly on the component $\rho'$. 

The socle of a point under the action of a group $G$ is not considered ``trivial''; in particular, if $G$ is not abelian then the set $S^p$ is not empty. For the same reason, if a group $G=G_1\times G_2$ acts on a simple spherical variety $X$ such that the action of $G_1$ is trivial, then the socle of $X$ with respect to the $G$-action in general is not equal to the socle of $X$ considered only as a $G_2$-variety, although the difference is only that in the first case all simple roots of $G_1$ are in $S^p(X)$.

\textbf{Spherical modules}, i.e.\ finite dimensional $G$-modules that are spherical as $G$-varieties, will play a crucial role in what follows. The following well known proposition justifies the terminology "quasi-vectorial."

\begin{proposition} \label{prop:colors_E}
Let $E$ be a spherical $G$-module. Then every color of $E$ contains $\{0\}$, which is the unique closed orbit of $E$.
\end{proposition}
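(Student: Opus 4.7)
The plan is to exploit the extra $\mathbb{G}_m$-action on $E$ by scalar multiplication, which commutes with $G$. First I would observe that this scaling action immediately gives the uniqueness of the closed orbit: for any $x\in E$ the limit $\lim_{t\to 0} t\cdot x = 0$ lies in $\overline{G\cdot x}$, so $\{0\}$ is contained in every orbit closure and is therefore the unique closed $G$-orbit.

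For the main statement I would translate the problem into the language of defining equations. Since $E$ is a vector space, $\C[E]$ is a polynomial ring, hence a UFD, so every color $D$ is principal: $D = \{f=0\}$ for an irreducible $B$-semi-invariant $f\in\C[E]$, unique up to scalar. To conclude that $0\in D$ it suffices to show that $f$ is homogeneous with no constant term, i.e.\ that $f$ is a homogeneous polynomial of positive degree (positivity is automatic since $f$ is irreducible and nonconstant).

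The key step is therefore the homogeneity of $f$. The $\GGm$-action grades $\C[E] = \bigoplus_{d\geq 0} \C[E]_d$, and because $\GGm$ commutes with $G$ (in particular with $B$), this grading refines the $B$-weight decomposition: the $B$-eigenspace $\C[E]^{(B)}_\lambda$ of any weight $\lambda$ splits as $\bigoplus_d \bigl(\C[E]_d\bigr)^{(B)}_\lambda$. Here I would invoke sphericality of $E$, which forces $\dim \C[E]^{(B)}_\lambda \leq 1$ for every $\lambda\in\Lambda(E)$. Writing $f = \sum_d f_d$ with $f_d\in\C[E]_d$, each nonzero $f_d$ is a $B$-eigenvector of weight equal to that of $f$; sphericality then leaves exactly one degree contributing, so $f$ itself is homogeneous and $0\in\{f=0\}=D$.

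I do not expect any serious obstacle: the only point that requires a moment of care is the interaction between the grading and the $B$-eigenspace decomposition, and this is automatic once one notes that the $\GGm$- and $G$-actions commute. Everything else is a standard application of the UFD property of $\C[E]$ and the definition of sphericality.
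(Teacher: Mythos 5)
Your argument for the main claim---that every color of $E$ contains the origin---is correct and essentially the same as the paper's: since $\C[E]$ is a UFD, a color has an irreducible $B$-semi-invariant global equation $f$, and multiplicity-freeness (one-dimensional $B$-eigenspaces of each weight) combined with the fact that scalar multiplication commutes with $B$ forces $f$ to be homogeneous of positive degree, hence $f(0)=0$. That part needs no changes.

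The gap is in your justification of the secondary claim that $\{0\}$ is the unique closed orbit. You assert that $\lim_{t\to 0}t\cdot x=0$ lies in $\overline{G\cdot x}$, but scalar multiplication merely commutes with the $G$-action; it does not preserve $G$-orbits or their closures, so $t\cdot x\in\overline{G\cdot x}$ is unjustified as stated. For a general $G$-module the conclusion is even false: in the adjoint representation of $\SL(2)$ on $\sl_2$, the orbit of a nonzero semisimple element is closed and does not contain $0$ (that module is not spherical, but your limit argument nowhere used sphericality at this point, which is the sign that something is missing). The standard repair, and the one the paper uses, is that $E$ affine and spherical gives $\C[E]^G=\C$, so $E\quot G=\Spec\C[E]^G$ is a single point and hence $E$ has exactly one closed orbit, which must be the fixed point $\{0\}$. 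Alternatively, you could close the gap with your own homogeneity observation: multiplicity-freeness makes every isotypic component of $\C[E]$ homogeneous, so every $G$-stable ideal is graded and every $G$-orbit closure is a cone, i.e.\ $\GGm$-stable; only after this remark does your limit argument become valid.
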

\begin{proof}
Clearly $\{0\}$ is a closed orbit, and since $E$ is affine and spherical, it is the only one. Let $D$ be a color of $E$. Since the $G$-action on $\CC[E]$ respects the grading and $\C[E]$ is multiplicity free, every irreducible submodule of $\CC[E]$ is homogeneous. Since $\C[E]$ is a UFD, the prime divisor $D$ has an irreducible global equation $f_D$, unique up to multiplication by a nonzero scalar. Since $D$ is $B$-stable, $f_D$ is a highest weight vector. 
It follows that $f_D$ is homogeneous, and consequently $f_D(0) = 0$. 
\end{proof}

\begin{definition}\label{def:module}
If $E$ is a spherical $G$-module, and
\[
E = \bigoplus_{i\in I} E_i
\]
a decomposition into irreducibles, then $\gamma^E_i$ (for any $i\in I$) denotes the $B$-eigenvalue of a $B$-eigenvector $f_i\in\CC[E_i]$ of degree $1$. It is also the highest weight of $E_i^*$. The set
\[
D_i := \{v \in E_i \colon f_i(v)=0\} \times \bigoplus_{j\in I\setminus\{i\}} E_j
\]
is a $B$-stable hyperplane of $E$. Write
\[
\D^1_E = \{ D_i \;|\; i\in I\},
\]
and for each $D = D_i$ as above denote $\gamma^E_i$ also by $\gamma^E_D$.
\end{definition}

\begin{lemma} \label{lemma:Sigma_N_D_1}
In the notations of Definition~\ref{def:module}, the group $\Lambda(E)$ is generated by $\Sigma^N(E)\cup\{ \gamma^E_i \;|\; i\in I\}$.
\end{lemma}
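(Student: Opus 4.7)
The plan is to show that any $\lambda \in \Lambda(E)$ can be written as $\sum_i n_i \gamma^E_i$ plus an element of $\Z\Sigma^N(E)$, by analysing the irreducible $B$-semi-invariants of $\C[E]$ through the multi-grading coming from the decomposition $E = \bigoplus_i E_i$ together with the root monoid $\mathscr M_E$. The inclusion $\Z\Sigma^N(E) + \Z\{\gamma^E_i \colon i \in I\} \subseteq \Lambda(E)$ is immediate, since each $\gamma^E_i$ lies in $\wm(E) \subseteq \Lambda(E)$ and $\Sigma^N(E) \subseteq \Lambda(E)$ by construction.

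For the reverse inclusion, I would first observe that, since $E$ is affine and normal, every $B$-eigenvector in $\C(E)$ is a quotient of two elements of $\C[E]^{(B)}$, so $\Lambda(E) = \Z\wm(E)$ is generated as a group by the weights $\lambda(g)$ of the irreducible elements $g$ of the multiplicative monoid $\C[E]^{(B)}$. Sphericality of $E$ implies that $\C[E]^U$ is multiplicity-free, so every $B$-weight space of $\C[E]^{(B)}$ has dimension at most one. Since each multi-graded summand of $\C[E] = \bigotimes_i \C[E_i]$ is $B$-stable, it then follows that any irreducible $g$ is homogeneous of some multi-degree $(d_i)_{i \in I}$, and hence lies in $\bigotimes_i S^{d_i}(E_i^*)$, whose highest $B$-weight is $\sum_i d_i \gamma^E_i$. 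Consequently $\beta(g) := \sum_i d_i \gamma^E_i - \lambda(g)$ is a non-negative integer combination of simple roots.

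The crux is to show $\beta(g) \in \mathscr M_E$, and I would argue by induction on $\sum_i d_i$: picking some $i$ with $d_i \geq 1$, the irreducible $G$-submodule $V(\lambda(g)) \subseteq \bigotimes_j S^{d_j}(E_j^*)$ sits inside a product $V(\gamma^E_i) \cdot V(\mu)$, where $V(\mu)$ is an irreducible $G$-submodule of $\bigotimes_j S^{d_j - \delta_{ij}}(E_j^*) \subseteq \C[E]$; this exhibits $\gamma^E_i + \mu - \lambda(g)$ as a generator of $\mathscr M_E$, and combined with the inductive conclusion $\sum_j d_j \gamma^E_j - \gamma^E_i - \mu \in \mathscr M_E$ one obtains $\beta(g) \in \mathscr M_E$. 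By Knop's theorem (recalled in the paper right after the definition of $\Sigma^N$), the set $\Sigma^N(E)$ is a basis of the saturation of $\mathscr M_E$, so $\Z\mathscr M_E = \Z\Sigma^N(E)$ and hence $\beta(g) \in \Z\Sigma^N(E)$. Therefore $\lambda(g) \in \Z\{\gamma^E_i\} + \Z\Sigma^N(E)$, and taking $\Z$-spans over all $g$ completes the proof.

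The main obstacle will be making the inductive step precise: one has to locate, inside $\bigotimes_j S^{d_j}(E_j^*)$, a factorisation that exhibits $V(\lambda(g))$ as a summand of $V(\gamma^E_i) \cdot V(\mu)$ for some irreducible submodule $V(\mu) \subseteq \C[E]$ of strictly smaller total multi-degree, so that a two-factor generator of $\mathscr M_E$ of the required shape is produced at each stage of the recursion.
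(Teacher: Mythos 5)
Your argument is correct and is essentially the paper's own proof: induction on the total multidegree, multiplying by a degree-one factor $E_i^*$, and invoking Knop's theorem identifying $\Sigma^N(E)$ with the basis of the saturation of the root monoid $\mathscr M_E$ so that the accumulated defect lies in $\Z\Sigma^N(E)$. The step you flag as the main obstacle is settled exactly along the lines you indicate: since $\C[E]$ is multiplicity free, the submodule of highest weight $\lambda(g)$ is a full isotypic component, so after decomposing the multidegree-lowered piece into irreducibles $N_k$ it must lie in a single product $N_k E_i^*$, which produces the required two-factor generator of $\mathscr M_E$ at each stage.
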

\begin{proof}
Since $\Sigma^N(E)\cup\{ \gamma^E_i \;|\; i\in I\}$ is a subset of $\lat(E)$ and since $\lat(E) = \Z\wm(E)$ it is enough to show that the subgroup $\Xi$ of $\lat(E)$ generated by $\Sigma^N(E)$ and $\{ \gamma^E_i \;|\; i\in I\}$ contains all the highest weights of $\CC[E]$. Let $M$ be a simple submodule of $\CC[E]$; then $M$ is contained in a product
\[
M \inn \prod_{i\in I} (E_i^*)^{a_i}
\]
where each $a_i\in\ZZ_{\geq0}$. We proceed by induction on $s= \sum_i a_i$. If $s=1$ then $M=E_i$ for some $i$, whence the highest weight of $M$ is $\gamma^E_i$.

We now show the induction step. Write
\begin{equation}
M \inn N E_i^* \label{eq:M_in_prod}
\end{equation}
for some $i\in I$, where
\[
N = \left(\prod_{j\in I\setminus\{i\}} (E_j^*)^{a_j}\right) \cdot (E_i^*)^{a_i-1}.
\]
By the induction hypothesis, the highest weights of all simple submodules of $N$ are in the group $\Xi$. We want to replace $N$ by an irreducible submodule in \eqref{eq:M_in_prod}, so we write a decomposition
\[
N = \bigoplus_{k} N_k
\]
of $N$ into simple submodules. The product $N E_i^*$ is the sum of the products $N_k E_i^*$, hence $M$ (which is an isotypic component of $\C[E]$, by the sphericity $E$) is contained in $N_k E_i^*$ for some $k$. By the definition of $\Sigma^N(E)$ and because both $N_k$ and $E_i^*$ are irreducible, the highest weight of $M$ is then of the form $\mu=\eta + \gamma^E_i - \sigma$, where $\eta$ is the highest weight of $N_k$ and $\sigma$ is in the monoid generated by $\Sigma^N(E)$. It follows that $\mu$ is in $\Xi$.
\end{proof}

\begin{remark} \label{rem:sigma_N_sc_E}
If $E$ is a spherical $G$-module, then $\Sigma^N(E) = \Sigma^{sc}(E)$ by Proposition~\ref{prop:doubling_sr}. Indeed, since $\C[E]$ is a UFD, no two colors can have the same valuation. 
\end{remark}

\begin{lemma}\label{lemma:socleindependent}
For every $i\in\{1,2\}$ let $G_i$ be a connected reductive group, suppose that $(G_1,G_1)\cong(G_2,G_2)$ and identify these two groups via a fixed isomorphism; choose moreover Borel subgroups $B_i\subseteq G_i$ in such a way that $B_1\cap (G_1,G_1) = B_2 \cap (G_2,G_2)$. Let $X$ be a simple spherical $G_i$-variety for every $i\in\{1,2\}$, suppose that the actions of $(G_1,G_1)$ and of $(G_2,G_2)$ coincide, and that the actions of $G_1^r$ and $G_2^r$ commute. Then the socles of $X$ with respect to the actions of $G_1$ and $G_2$ (resp.\ computed with respect to the Borel subgroups $B_1$ and $B_2$) coincide.
\end{lemma}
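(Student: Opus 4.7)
The strategy will be to verify that each of the six components of $\soc(X)$ is the same when computed with respect to $G_1$ or $G_2$. Let $G':=(G_1,G_1)=(G_2,G_2)$ and $B':=B_1\cap G'=B_2\cap G'$, so that $B_i=B'\cdot G_i^r$ and $G_i=G'\cdot G_i^r$ for $i\in\{1,2\}$. The recurring input is that for $i\neq j$ the connected central torus $G_j^r$ commutes with everything in $G_i$ and in $B_i$: it commutes with $G'$ (being central in $G_j$) and with $G_i^r$ by hypothesis, so it acts on $X$ by both $G_i$- and $B_i$-equivariant automorphisms.

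First I would match the data coming from the orbit/divisor structure. By $G_i$-sphericity, the sets of $B_i$-stable prime divisors, of $G_i$-stable prime divisors, of open $B_i$-orbits, and of closed $G_i$-orbits are all finite, with the last two being singletons. Since the connected torus $G_j^r$ permutes each of these sets, it fixes every element. Thus the open $B_1$-orbit $O_1$ is $G_2^r$-stable, hence $B_2$-stable; at any point of $O_1\cap O_2$ this forces $O_1=O_2$, and the closed-orbit and divisor identifications are analogous. Consequently the closed $G$-orbit, $\col_X$, $\V_X$ and $S^p(X)$ are all independent of $i$; for the last one this is because $\alpha\in S$ belongs to $S^p(X)$ relative to $G_i$ if and only if the root subgroup $U_{-\alpha}\inn G'$ stabilises the common open $B$-orbit.

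Next I would match $\lat(X)$, $\V(X)$ and $\Sigma^{sc}(X)$, which I expect to be the main obstacle. Given a $B_1$-semi-invariant $f\in\C(X)$ of $B_1$-weight $\chi$ and $z\in G_2^r$, the function $z\cdot f$ is a $B_1$-semi-invariant of the same $B_1$-weight as $f$, so by $G_1$-sphericity equals $\phi(z)f$ for some $\phi(z)\in\C^\times$; this produces a character of $G_2^r$ and shows that $f$ is also $B_2$-semi-invariant, so restriction to $B'$ canonically identifies the two lattices. For valuation cones I would invoke the standard injectivity of $\rho_X$ on $G$-invariant discrete valuations of a spherical variety: if $\nu$ is $G_1$-invariant then so is $z^*\nu$, and they have the same $\rho_X$-image because $z$ scales each $B_1$-semi-invariant, so $z^*\nu=\nu$, making $\nu$ also $G_2^r$-invariant and hence $G_2$-invariant. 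Therefore $\Sigma(X)$ is the same for both groups, and Proposition~\ref{prop:doubling_sr} forces the same $\Sigma^{sc}(X)$ since its doubling rules depend only on the already-matched data $\Sigma(X)$, $S^p(X)$, $S$, and the root lattice of $G'$. The remaining components $\A(X)$ and $\rho'_X$ are then read off from $\Sigma(X)\cap S$, the common colour set and the restriction of $\rho_X$ to $\Z\Sigma^{sc}(X)$, completing the comparison.
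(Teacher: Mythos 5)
Your treatment of the ``discrete'' invariants is fine and matches the paper's: the commuting connected torus $G_{3-i}^r$ fixes each member of the finite collections of $B_i$-stable and $G_i$-stable prime divisors, the open $B_i$-orbit and the closed orbit, which gives the coincidence of $\col_X$, $\V_X$, the closed orbit and $S^p(X)$; and your argument that the two groups have the same set of invariant valuations (injectivity of $\rho_X$ on invariant valuations plus the fact that any $z\in G_2^r$ rescales every $B_1$-eigenfunction) is correct and is a nice alternative to anything in the paper. The genuine gap is the assertion that ``restriction to $B'$ canonically identifies the two lattices'', on which everything afterwards rests. This is false in general: restriction to $B'$ can have a kernel on $\lat(X_i)$ (already for $X=\GGm$ with $G_i=\GGm$ acting by multiplication), and even when injective it does not match the two lattices as sets of characters. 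For instance, let $G_1=G_2=\SL(2)\times\GGm$ act on $X=\C^2$ with the torus factor acting by scalars in the first case and trivially in the second: the linear form $y$ is a common eigenfunction whose $B_1$-weight is $\omega+\epsilon$, with $\omega$ its $B'$-weight and $\epsilon$ a nontrivial character of $G_1^r$, while its $B_2$-weight is $\omega$; so $\lat(X_1)=\Z(\omega+\epsilon)$ and $\lat(X_2)=\Z\omega$. All one can say in general is that the $B_1$- and $B_2$-semi-invariant functions coincide and that the two weights of a given function agree on $B'$.

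This matters because the remaining components of the socle are exactly where the behaviour of weights off $B'$ intervenes. The elements of $\Sigma(X_i)$ are by definition primitive in $\lat(X_i)$, and $\rho'_{X_i}$ is computed from the vanishing of the specific eigenfunctions of weight $\sigma\in\Sigma^{sc}(X_i)$; knowing that the invariant valuations form the same set does not by itself show that the primitive facet normals of the two valuation cones, taken in two different lattices inside two different character groups, are literally the same characters (supported on the simple roots of the common derived group), nor that the weight-$\sigma$ eigenfunctions for $B_1$ and for $B_2$ are the same rational functions --- a common eigenfunction can acquire a nonzero central character under the other action, as in the example above. This comparison is the real content of the lemma and is where the paper's proof does its work: in the quasi-affine case it shows the root monoids $\mathscr M_{X_1}$ and $\mathscr M_{X_2}$ are literally equal (central characters cancel in $\lambda+\mu-\nu$), which pins down $\Sigma(X_1)=\Sigma(X_2)$ inside the sublattice of characters vanishing on the radicals and identifies the relevant eigenspaces; the general simple case is then reduced to the quasi-affine one by embedding $X$ in a projective space (Sumihiro) and passing to the punctured affine cone, transferring $\Sigma^{sc}$ back via Gandini's lemma and Knop's results. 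Your proposal contains no substitute for this step, so the equality of $\Sigma^{sc}$, of $\A$ and of $\rho'$ remains unproved.
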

\begin{proof}
We may suppose that $G_i=(G_i,G_i)\times G_i^r$, whence $B_i=(B_i\cap(G_i,G_i))\times G_i^r$, and we denote $X$ by $X_i$ when the $G_i$-action is considered. Notice that the images in $\Aut(X)$ of $G_1$ and of $G_2$ normalize each other, so $G_i$ permutes the orbits of $G_{3-i}$ for all $i\in\{1,2\}$. Since these are finitely many, and $G_i$ is connected, we deduce that $G_i$ stabilizes each $G_{3-i}$-orbit. It follows that $G_1$ and $G_2$ have the same orbits on $X$, and a similar argument yields that the $B_1$-orbits and the $B_2$-orbits also coincide. As a consequence, $X$ has the same colors and the same $G_i$-invariant divisors with respect to both actions. The equalities $\A(X_1)=\A(X_2)$, $\col_{X_1}=\col_{X_2}$ and $\V_{X_1} = \V_{X_2}$ follow.

Notice that for all $i$ the parabolic subgroup $P_{X_i}$ is equal to $(P_{X_i}\cap (G_i,G_i))\times G_i^r$, and $P_{X_i}\cap (G_i,G_i)$ is the stabilizer of the open $B_i$-orbit of $X_i$ in $(G_i,G_i)$. Since the open $B_1$-orbit coincides with the open $B_2$-orbit, we have $P_{X_1}\cap (G_1,G_1)=P_{X_2}\cap (G_2,G_2)$, which yields $S^p(X_1)=S^p(X_2)$. 

Moreover, the group $B_i$ stabilizes the set $\CC(X_{3-i})^{(B_{3-i})}_{\lambda}$ for all $\lambda\in\Lambda(X_{3-i})$ and all $i\in\{1,2\}$, where $\CC(X_{3-i})^{(B_{3-i})}_{\lambda}$ denotes the set of $B_{3-i}$-eigenvectors of $B_{3-i}$-weight $\lambda$. Therefore a rational function on $X$ is a $B_1$-eigenvector if and only if it is a $B_2$-eigenvector, and if $\lambda_i$ is its $B_i$-eigenvalue, then $\lambda_1|_{B_1\cap (G_1,G_1)}=\lambda_2|_{B_2\cap (G_2,G_2)}$. Let $\Chi'$ denote the subgroup of $\Chi(B_i)$ of those elements whose restriction to $G_i^r$ is zero (notice that $\Chi'$ is naturally a sublattice of both $\Chi(B_1)$ and $\Chi(B_2)$). The  considerations above imply that $\Lambda(X_1)\cap \Chi' = \Lambda(X_2)\cap \Chi'$.

Suppose now that $X$ is quasi-affine, and consider the root monoids $\mathscr M_{X_i}$. The primitive elements in $\Lambda(X_i)$ on the extremal rays of $\QQ_{\geq 0}\mathscr M_{X_i}$ are the spherical roots of $X_i$. Now, the irreducible submodules of $\CC[X_1]$ and $\CC[X_2]$ are the same subspaces, and the two highest weights of the same irreducible submodule coincide on $B_i\cap (G_i,G_i)$. This implies that $\mathscr M_{X_1}=\mathscr M_{X_2}$, so $\Sigma(X_1)=\Sigma(X_2)$ since $\Sigma(X_i)\inn \Lambda(X_i)\cap \Chi'$.  This implies that $\Sigma^{sc}(X_1)=\Sigma^{sc}(X_2)$ by Proposition~\ref{prop:doubling_sr}. Even more is true: for all $\sigma\in\Sigma^{sc}(X_i)$ we have $\CC(X)^{(B_1)}_\sigma = \CC(X)^{(B_2)}_\sigma$, since $\sigma \in \lat(X_i)\cap \Chi'$. 

It follows that $\rho'_{X_1} = \rho'_{X_2}$, since these two maps are computed considering the vanishing of the same rational functions on $X$ (precisely, those in $\CC(X)^{(B_i)}_\sigma$ for $\sigma\in\Sigma^{sc}(X_i)$) along the same prime divisors of $X$. We have shown that $\soc(X_1)=\soc(X_2)$ when $X$ is quasi-affine. 

In general $X$ may not be quasi-affine, but being simple it is quasi-projective by a theorem of Sumihiro \cite[Theorem 1]{sumihiro-eqvcompl}. Therefore we can consider $X$ under the action of $G_1\times G_2^r$ (which naturally contains both groups $G_1$ and $G_2$ thanks to our assumptions), and we can choose an embedding of $X$ as a locally closed subset of $\PP(V)$, where $V$ is a  $G_1\times G_2^r$-module, in such a way that the embedding is both $G_1$- and $G_2$-equivariant. Let $X'\subset V$ be the cone over $X\subseteq \PP(V)$, and define $Y=X'\setminus \{0\}$.

Then $Y$ is a quasi-affine spherical $G_i\times \GG_m$-variety, and it satisfies the hypotheses of the lemma with respect to the $G_i\times \GG_m$-actions (where all invariants are computed with the Borel subgroups $B_i\times \GGm$). Thanks to the first part of the proof we have $\soc(Y_1)=\soc(Y_2)$, where $Y_i$ is $Y$ under the action of $G_i\times \GGm$.

Let $X_i^m$ be $X_i$ equipped with the action of $G_i\times \GGm$, where $\GGm$ acts trivially, and observe that the natural projection $Y_i\to X_i^m$ is $G_i\times \GGm$-equivariant. This map induces a bijection of the sets of $G_i\times \GGm$-orbits, of the sets of $B_i\times \GGm$-orbits, and by pull-back an inclusion of the groups of $B_i\times\GGm$-semiinvariant rational functions compatible with the maps $\rho$. It follows $S^p(Y_i)=S^p(X_i^m)$, $\A(Y_i)=\A(X_i^m)$, $\col_{Y_i}=\col_{X_i^m}$, and $\V_{Y_i}=\V_{X_i^m}$ for each $i \in \{1,2\}$.

Moreover, a generic stabilizer $H_{X_i^m}$ of $X_i^m$ is equal to $H_{Y_i}\cdot(\{e\}\times \GGm)$, where $H_{Y_i}$ is a generic stabilizer of $Y_i$. It follows that $H_{X_i^m}/H_{Y_i}$ is connected, whence $\Lambda(X_i^m)$ is a saturated sublattice of $\Lambda(Y_i)$ by \cite[Lemma 2.4]{gandini-spherorbclos}. Together with \cite[Theorem 4.4 and proof of Theorem 6.1]{knop-lv}, we obtain the equalities $\Sigma^{sc}(Y_i)=\Sigma^{sc}(X_i^m)$, and we have shown $\soc(Y_i)=\soc(X_i^m)$ for each $i \in \{1,2\}$. Since obviously $\soc(X_i^m)=\soc(X_i)$, the proof is complete.
\end{proof}

\begin{lemma}\label{lemma:quasivec}
Let $X$ be a simple spherical variety. If its socle is equal to the socle of a quasi-vectorial variety $Y$, then $X$ is quasi-vectorial.
\end{lemma}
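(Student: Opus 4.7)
The plan is to prove $X$ is quasi-vectorial by a cardinality argument: since $\col_X$ is by definition a subset of the full set $\Delta(X)$ of colors of $X$, it is enough to verify the equality $|\col_X| = |\Delta(X)|$, for then every color of $X$ must contain the closed $G$-orbit.

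First I would observe that $|\Delta(X)|$ is determined by the spherical system alone. By Remark~\ref{rem:colors_variety_system}, the colors of $X$ are in natural bijection with the set of colors of the spherically closed spherical system $\s(X) = (S^p(X), \Sigma^{sc}(X), \A(X))$ as constructed in Definition~\ref{def:colors_sph_system}. Inspecting that definition, the three pairwise disjoint pieces $\Delta^a, \Delta^{2a}, \Delta^b$ have cardinalities that depend only on $S^p$, on $\Sigma^{sc}$, and on the cardinality of $\A$ (the combinatorial equivalence relation defining $\Delta^b$ is read off from $S^p$ and $\Sigma^{sc}$). The hypothesis $\soc(X) = \soc(Y)$ gives $S^p(X) = S^p(Y)$, $\Sigma^{sc}(X) = \Sigma^{sc}(Y)$, and a bijection $\A(X) \leftrightarrow \A(Y)$; therefore $|\Delta(X)| = |\Delta(Y)|$.

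Next, the same assumption provides a bijection $\col_X \leftrightarrow \col_Y$, so $|\col_X| = |\col_Y|$. Since $Y$ is quasi-vectorial, every color of $Y$ contains the closed orbit, that is $\col_Y = \Delta(Y)$ and hence $|\col_Y| = |\Delta(Y)|$. Chaining these equalities gives $|\col_X| = |\col_Y| = |\Delta(Y)| = |\Delta(X)|$, and combined with the inclusion $\col_X \subseteq \Delta(X)$ this forces $\col_X = \Delta(X)$, i.e.\ $X$ is quasi-vectorial.

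No substantive obstacle is anticipated; the proof is essentially bookkeeping, the only delicate point being to confirm that Definition~\ref{def:colors_sph_system} produces a color count that depends on $\A$ solely through its cardinality, which is immediate from the definition.
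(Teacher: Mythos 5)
Your argument is correct and is essentially the paper's own proof: both deduce from the equality of socles (via Remark~\ref{rem:colors_variety_system} and Definition~\ref{def:colors_sph_system}) that the total number of colors of $X$ and $Y$ agree, and then use $|\col_X|=|\col_Y|=|\col(Y)|$ together with $\col_X\subseteq\col(X)$ to conclude. The only difference is that you spell out the bookkeeping in Definition~\ref{def:colors_sph_system}, which the paper leaves implicit.
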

\begin{proof}
Since $Y$ is quasi-vectorial then $|\col_Y|=|\col(Y)|$. By Remark ~\ref{rem:colors_variety_system} the spherical system of a spherical variety $Z$ determines the number of colors of $Z$. We deduce that $|\col_X|=|\col(X)|$, i.e.\ $X$ is quasi-vectorial.
\end{proof}

The following proposition is a key step in the proof of the smoothness criterion. We remark that it uses the uniqueness statement in the Luna Conjecture, which was proved by Losev in \cite{losev-uniqueness}. 
\begin{proposition}\label{prop:sphmod}
A simple spherical variety $X$ is $G$-isomorphic to a spherical module if and only if
\begin{enumerate}
\item \label{prop:sphmod:soc} its socle is the socle of a spherical module; and
\item \label{prop:sphmod:bas} the $|\D_X|$-tuple $(\rho_X(D))_{D\in\D_X}$ is a basis of $\Lambda(X)^*$.
\end{enumerate} 
\end{proposition}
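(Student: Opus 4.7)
The implication ``only if'' is straightforward: if $X\cong E$ is a spherical $G$-module, condition~(\ref{prop:sphmod:soc}) is tautological, and for~(\ref{prop:sphmod:bas}) one combines Proposition~\ref{prop:colors_E} (so $\col_E=\D^1_E$ and $E$ is quasi-vectorial) with Lemma~\ref{lemma:Sigma_N_D_1} and Remark~\ref{rem:sigma_N_sc_E}, which give the generating set $\{\gamma^E_i\}\cup\Sigma^{sc}(E)$ for $\Lambda(E)$. The module structure provides the duality $\langle\rho_E(D_i),\gamma^E_j\rangle=\delta_{ij}$ on $\D^1_E=\col_E$, and the $G$-invariant irreducible polynomials on $E$ identify $\V_E$ with the complementary directions in $\Lambda(E)^*$; a rank count then confirms that $(\rho_E(D))_{D\in\D_E}$ is a $\ZZ$-basis of $\Lambda(E)^*$.

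For the ``if'' direction, assume (\ref{prop:sphmod:soc}) and (\ref{prop:sphmod:bas}) and fix a spherical $G$-module $E$ with $\soc(E)=\soc(X)$, which exists by~(\ref{prop:sphmod:soc}). By Proposition~\ref{prop:colors_E}, $E$ is quasi-vectorial, hence so is $X$ by Lemma~\ref{lemma:quasivec}; in particular $\col_X=\col(X)$. Since $\soc(X)=\soc(E)$, the spherical systems $\mathscr S(X)$ and $\mathscr S(E)$ are equal, so by the Luna conjecture proved in~\cite{bravi-pezzini-primwonderful} together with Losev's uniqueness the spherical closures $\overline{H}_X$ and $\overline{H}_E$ are $G$-conjugate. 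After conjugation we identify them with a common $\overline H$, and both $H_X,H_E$ lie in $\overline H$, with $\Lambda(X)$ and $\Lambda(E)$ sublattices of a common ambient lattice that both contain $\ZZ\Sigma^{sc}$.

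The heart of the argument is to prove $\Lambda(X)=\Lambda(E)$. Applying the ``only if'' direction to $E$, $(\rho_E(D))_D$ is a basis of $\Lambda(E)^*$, while $(\rho_X(D))_D$ is a basis of $\Lambda(X)^*$ by~(\ref{prop:sphmod:bas}). Under the socle bijection $\D_X\leftrightarrow\D_E$ their restrictions to $\ZZ\Sigma^{sc}$ coincide, so the corresponding dual bases $(\lambda^X_D)\subseteq\Lambda(X)$ and $(\lambda^E_D)\subseteq\Lambda(E)$ satisfy the same expansions $\sigma=\sum_D\langle\rho'(D),\sigma\rangle\,\lambda_D$ for every $\sigma\in\Sigma^{sc}$; the rigidity of the Luna--Losev classification inside the spherical closure $\overline H$ then forces $\lambda^X_D=\lambda^E_D$, whence $\Lambda(X)=\Lambda(E)$. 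Once spherical systems and lattices are matched, Losev's uniqueness for spherical homogeneous spaces gives $G/H_X\cong G/H_E$, and the matching $(\col,\V,\rho')$-data identifies the colored cones of the two simple embeddings, so that $X\cong E$ by Luna's classification of simple spherical varieties. The main obstacle is precisely the lattice-matching step: condition~(\ref{prop:sphmod:bas}) is essential for rigidifying $\Lambda(X)$ from the socle data and exploits the tight interplay between divisorial valuations and characters inside spherical closures.
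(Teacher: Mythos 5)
Your ``if'' direction breaks down at exactly the step you call the heart of the argument: the claim that $\soc(X)=\soc(E)$ plus condition~(\ref{prop:sphmod:bas}) forces $\Lambda(X)=\Lambda(E)$ is false, and no ``rigidity'' can rescue it, because the socle only records $\rho'_X(D)=\rho_X(D)|_{\Z\Sigma^{sc}(X)}$ and carries no information about characters of the radical $G^r$ (this is the content of Lemma~\ref{lemma:socleindependent}). Concretely, take $G=\SL(2)\times\GGm$, let $E=\C^2$ with $\GGm$ acting trivially and $X=\C^2$ with $\GGm$ acting by scalars: both are spherical modules with the same socle, both satisfy (\ref{prop:sphmod:soc}) and (\ref{prop:sphmod:bas}), yet $\Lambda(E)=\Z\omega$ while $\Lambda(X)=\Z(\omega+\epsilon)$, and $X\not\cong E$ as $G$-varieties. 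So the strategy of proving $X\cong E$ for the \emph{fixed} module $E$ cannot work; the proposition only asserts isomorphism with \emph{some} spherical module. This is precisely what the paper's proof is built to handle: it enlarges the group to $\widetilde G=G\times\GL(E)^G$, writes the dual-basis weights as $\gamma_D=\eta_D+\epsilon_D$ with $\eta_D\in\Chi(\widetilde T\cap(\widetilde G,\widetilde G))$ and $\epsilon_D\in\Chi(\widetilde C)$, uses Losev's uniqueness to arrange $\overline H_X=\overline H_E$ and deduce $\eta^X_D=\eta^E_D$, and then constructs a homomorphism $\phi\colon\widetilde G\to\widetilde G$ which is the identity on $(\widetilde G,\widetilde G)$ and sends $\epsilon^E_D\mapsto\epsilon^X_D$ (possible because the $\epsilon^E_D$, $D\in\D^1_E$, span a saturated sublattice of $\Chi(\widetilde C)$ thanks to the extra torus $\GL(E)^G$). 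The \emph{twisted} module $E'$, i.e.\ $E$ with the action precomposed with $\phi$, has $\Lambda(E')=\Lambda(X)$ and matching $\rho$-data, and only then do Losev's uniqueness (for the open orbits) and Knop's Theorem 3.1 give $X\cong E'$. Both hypotheses are used exactly here: (\ref{prop:sphmod:bas}) provides the dual bases $(\gamma^X_D)$, $(\gamma^E_D)$ and makes $\phi^*|_{\Lambda(E)}\colon\Lambda(E)\to\Lambda(X)$ an isomorphism. Your proposal is missing this twisting idea entirely.

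Your ``only if'' direction also needs repair. The identification $\col_E=\D^1_E$ is wrong in general: colors of a spherical module need not be linear hyperplanes, and $\D^1_E$ may contain $G$-stable hyperplanes; moreover a ``rank count'' only yields a $\QQ$-basis of $\Lambda(E)^*\otimes\QQ$, not a $\Z$-basis. The paper argues instead that every $D\in\D_X$ has a global $B$-eigen equation $f_D$ (factoriality of $\C[X]$), that the eigenvalues $\gamma_D$ are dual to the $\rho_X(D)$, and that they generate $\Lambda(X)$: for $f\in\C(X)^{(B)}_\lambda$ the function $F=\prod_D f_D^{\langle\rho_X(D),\lambda\rangle}$ has the same divisor as $f$, so $F/f$ is constant and $\lambda$ lies in the group generated by the $\gamma_D$. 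This divisor comparison, not Lemma~\ref{lemma:Sigma_N_D_1}, is what gives integrality.
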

\begin{proof}
Suppose that $X$ is a spherical module. Then any element $D\in\D_X$ has a global equation $f_D\in\CC[X]^{(B)}$, where $\CC[X]^{(B)}$ denotes theset of $B$-eigenvectors of $\CC[X]$. Its $B$-eigenvalue $\gamma_D$ belongs to $\Lambda(X)$, takes value $1$ on $\rho_X(D)$ and $0$ on $\rho_X(D')$ where $D'$ is any element of $\D_X$ different from $D$. This shows that both $(\rho_X(D))_{D\in\D_X}$ and $( \gamma_D)_{D\in\D_X}$ are linearly independent in $\lat(X)^*$ and $\lat(X)$, respectively.

Now part (\ref{prop:sphmod:bas}) follows if we show that $( \gamma_D)_{D\in\D_X}$ generates $\Lambda(X)$. Pick $\lambda\in\Lambda(X)$, choose $f\in\CC(X)^{(B)}_\lambda$, where $\CC(X)^{(B)}_\lambda$ denotes the set of $B$-eigenvectors of $B$-weight $\lambda$, and consider
\[
F = \prod_{D\in\D_X} f_D^{\langle \rho_X(D),\lambda\rangle}.
\]
Then $F$ is a $B$-semiinvariant rational function on $X$, with $\Div(F)=\Div(f)$ and $B$-eigenvalue belonging to the group generated by  $( \gamma_D)_{D\in\D_X}$. The quotient $F/f$ is then an invertible rational function on $X$, therefore constant. We deduce that $F$ and $f$ have the same $B$-eigenvalue, hence $\lambda$ is in the group generated by $( \gamma_D)_{D\in\D_X}$.

Now we show that the conditions (\ref{prop:sphmod:soc}) and (\ref{prop:sphmod:bas})  are sufficient for $X$ to be $G$-isomorphic to a spherical module. We may suppose that $G= (G,G)\times G^r$.

Let $E$ be a spherical module with the same socle as $X$, and let $E=\bigoplus_{i\in I} E_i$ be its decomposition into irreducibles as in Definition~\ref{def:module}. The $G$-action on $E$ is not uniquely determined by the socle; in particular the socle gives no information on the action of $G^r$ (see e.g.\ Lemma~\ref{lemma:socleindependent}). To prevent any difficulty arising from this fact, we let the bigger group $\widetilde G=G\times \GL(E)^G$ act on $E$ in the obvious way. We also let $\widetilde G$ act on $X$, by letting the $|I|$-dimensional torus $\GL(E)^G$ act trivially. Denote by $\widetilde C$ the radical $G^r\times \GL(E)^G$ of $\widetilde G$, by $\widetilde T$ the maximal torus $T\times \GL(E)^G$, and by $\widetilde B$ the Borel subgroup $B\times \GL(E)^G$ of $\widetilde G$. Notice that the assumptions of the proposition also apply to the $\widetilde G$-action, and proving it for $\widetilde G$ implies the proposition for $G$. In what follows, all invariants are now relative to the $\widetilde G$-action. We will write $\Sigma^{sc}$ for the two equal sets $\Sigma^{sc}(X)=\Sigma^{sc}(E)$, and since $\D_X$ is identified with $\D_E$ via a fixed bijection, we may sometimes write just $\D$ for both sets, and the same for $\col_X$ and $\V_X$.

For any $D\in\D_E$ (resp.\ $\D_X$), let $\gamma^E_D$ (resp.\ $\gamma^X_D$) be the element corresponding to $D$ in the basis of $\Lambda(E)$ (resp.\ $\Lambda(X)$) dual to $\rho_E(\D_E)$ (resp.\ $\rho_X(\D_X)$). Notice that this notation is compatible with Definition~\ref{def:module}. 
A rational function $f\in\CC(X)^{(\widetilde B)}$ with $\widetilde B$-eigenvalue $\gamma^X_D$ vanishes on $D$ and has no zero nor pole on any other $\widetilde B$-stable prime divisor, since $\langle \rho_X(D'),\gamma^X_D\rangle = \delta_{D',D}$ for all $D' \in \mathcal{D}_X$ and $\mathcal{D}_X$ is the set of all $B$-stable prime divisors on $X$ because $X$ is quasi-vectorial by Proposition~\ref{prop:colors_E} and Lemma~\ref{lemma:quasivec}. Consequently $f$ is a global equation of $D$. 

Write $\gamma^E_D = \eta^E_D + \epsilon^E_D$ and $\gamma^X_D =\eta^X_D + \epsilon^X_D$, where $\eta^E_D,\eta^X_D \in \Chi(\widetilde T\cap (\widetilde G,\widetilde G))$ and $\epsilon^E_D,\epsilon^X_D \in \Chi(\widetilde C)$.

Consider now the set $\D^1_E$ of Definition~\ref{def:module}. The restrictions $(\epsilon^E_D)|_{\GL(E)^G}$ for $D$ varying in $\D^1_E$ are by construction a basis of $\Chi(\GL(E)^G)$. It follows that the weights $\epsilon^E_D$ are a basis of the lattice $\Xi$ they generate and that the latter is saturated inside $\Chi(\widetilde C)$. Hence $\Xi$ is a direct summand of $\Chi(\widetilde C)$, and there exists a homomorphism $\Chi(\widetilde C)\to \Chi(\widetilde C)$ sending $\epsilon^E_D$ to $\epsilon^X_D$ for all $D\in\D^1_E$.

We extend the corresponding homomorphism $\widetilde C\to \widetilde C$ to $\widetilde G$ via the identity on $(\widetilde G,\widetilde G)$, and we denote the extension by $\phi\colon \widetilde G\to\widetilde G$. It also induces a homomorphism $\phi^*\colon\Chi(\widetilde T)\to\Chi(\widetilde T)$.

Now consider generic stabilizers $H_E\subseteq \widetilde G$ of $E$ and $H_X$ of $X$. The homogeneous spaces $\widetilde G/\overline H_X$ and $\widetilde G/\overline H_E$ have the same spherical system. By \cite[Theorem 1]{losev-uniqueness} we may assume that $\overline H_E=\overline H_X$.

It follows that the pull-back on $\widetilde G$ of any $D\in\col_X$ along $\widetilde G\to \widetilde G/H_X$ coincides with the pull-back of the corresponding color of $E$ along $\widetilde G\to \widetilde G/H_E$, because $D$ is the pull-back of a color of $\widetilde G/\overline H_X = \widetilde G/\overline H_E$ along $\widetilde G/H_X \to \widetilde G/\overline H_X$. Since $\gamma^X_D$ is the eigenvalue of global equation of $D$ in $X$ and $\gamma^E_D$ is the eigenvalue of a global equation of the corresponding color in $E$, the weights $\gamma^E_D$ and $\gamma^X_D$ are the $\widetilde B$-eigenvalues of two global equations in $\CC[\widetilde G]$ of this pull-back, hence they differ only by a character of $\widetilde C$. This shows that $\eta^X_D=\eta^E_D$ for all $D\in\col$.

On the other hand $D\in\V_X$ (resp.\ $\V_E$) is $\widetilde G$-stable and $\gamma^X_D$ (resp.\ $\gamma^E_D$) is the $\widetilde G$-eigenvalue of a global equation of $D$ in $X$ (resp.\ $E$).  It follows that $\gamma^X_D$ and $\gamma^E_D$ are $\widetilde G$-characters, and therefore $\eta^X_D=\eta^E_D=0$ for all $D\in\V$. 

At this point we have shown that $\phi^*(\gamma^E_D)=\gamma^X_D$ for all $D\in\D^1_E$. Since we know that $\phi^*$ is the identity on $\Sigma^{sc}$, it follows from Lemma~\ref{lemma:Sigma_N_D_1} and Remark~\ref{rem:sigma_N_sc_E} that $\phi^*(\Lambda(E))\subseteq \Lambda(X)$, hence $\phi^*$ also induces a dual map $\phi^{**}\colon \Lambda(X)^*\to\Lambda(E)^*$.

We compare now $\rho_E(D)$ and $\phi^{**}(\rho_X(D))$ for all $D\in\D$. They are equal on $\Sigma^{sc}$ by hypothesis, and
\[
\langle\phi^{**}(\rho_X(D)),\gamma^E_{D'}\rangle = \langle\rho_X(D),\phi^*(\gamma^E_{D'})\rangle = \langle\rho_X(D),\gamma^X_{D'}\rangle = \delta_{D,D'} = \langle\rho_E(D),\gamma^E_{D'}\rangle
\]
for all $D'\in\D^1_E$. This means that $\rho_E(D)$ and $\phi^{**}(\rho_X(D))$ coincide on a set of generators of $\Lambda(E)$, therefore they are equal for all $D\in\D$. Since $(\rho_E(D))_{D\in\D}$ is a basis of $\Lambda(E)^*$ and $(\rho_X(D))_{D\in\D}$ is a basis of $\Lambda(X)^*$, we also have that $\phi^{**}$ is an isomorphism. Consequently, so is $\phi^*|_{\Lambda(E)}\colon\Lambda(E)\to\Lambda(X)$.

We define a new action of $\widetilde G$ on $E$, denoting the obtained module by $E'$. Let $\psi\colon \widetilde G\to \GL(E)$ be the homomorphism induced by our original action, set $E'=E$ as vector spaces, and define $\psi'\colon \widetilde G\to \GL(E')$ to be $\psi' = \psi\circ\phi$. We claim that $E'$ is spherical, that $\Lambda(E')=\Lambda(X)$, $\V(E')=\V(X)$, and that $\D_{E'}$ can be identified with $\D_{X}$ compatibly with the maps $\rho_{E'}$ and $\rho_X$.

For the first claim, decompose
\[
\CC[E] = \bigoplus_{\lambda \in \wm(E)} V(\lambda)
\]
into a sum of irreducibles. Each $V(\lambda)$ is also an irreducible submodule of $\CC[E']$, of highest weight $\phi^*(\lambda)$. Since $\CC[E]$ is multiplicity free and $\phi^*$ is injective on $\Lambda(E)$, we deduce that $\CC[E']$ is also multiplicity free. This proves that $E'$ is spherical.

For the second claim, notice that $E'$ and $E$ have the same socle thanks to Lemma~\ref{lemma:socleindependent}; then, with similar considerations as for the first claim, we have equality of the lattices $\Lambda(E')=\Lambda(X)$. The equality $\rho_{E'}(D)=\rho_X(D)$ for all $D\in\D$ follows from the definition of $E'$.

By \cite[Theorem 1]{losev-uniqueness}, the open $\widetilde G$-orbits of $X$ and $E'$ are isomorphic. Finally, we apply \cite[Theorem 3.1]{knop-lv} to the simple varieties $X$ and $E'$ and we deduce that they are equivariantly isomorphic.
\end{proof}

\begin{corollary}\label{cor:quasivec}
For an affine spherical $G$-variety $X$ the following are equivalent:
\begin{enumerate}
\item\label{cor:quasivec:qv} $X$ is smooth and quasi-vectorial;
\item\label{cor:quasivec:GKV} $X$ is isomorphic to a product $G/K \times V$, where $K\subseteq G$ is a subgroup containing $(G,G)$ and $V$ is a spherical $G$-module;
\item\label{cor:quasivec:comb} the following two conditions hold:
\begin{enumerate}
\item\label{cor:quasivec:comb:soc} its socle is the socle of a spherical module,
\item\label{cor:quasivec:comb:bas}  the $|\D_X|$-tuple $(\rho_X(\D_X))_{D\in\D_X}$ can be completed to a basis of $\Lambda(X)^*$.
\end{enumerate}
\end{enumerate}
\end{corollary}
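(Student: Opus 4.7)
The plan is to prove the implications (2) $\Rightarrow$ (1), (2) $\Rightarrow$ (3), (1) $\Rightarrow$ (2), and (3) $\Rightarrow$ (2). For (2) $\Rightarrow$ (1), write $X = (G/K) \times V$: both factors are smooth, and since $G = (G,G) \cdot G^r$ with $(G,G) \subseteq K$ and $G^r \subseteq T \subseteq B$ we have $G = T \cdot K = B \cdot K$, so $B$ acts transitively on the torus $G/K$, which therefore has no colors. Every color of $X$ is thus a pullback of a color of $V$; by Proposition~\ref{prop:colors_E} each contains $(G/K) \times \{0\}$, which is the unique closed $G$-orbit, so $X$ is quasi-vectorial. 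For (2) $\Rightarrow$ (3), the invariants of the torus $G/K$ are $\Sigma^{sc} = \A = \col = \V = \emptyset$ and $S^p = \sr$, giving $\soc(X) = \soc(V)$; the splitting $\lat(X)^* = \Chi(G/K)^* \oplus \lat(V)^*$ places $(\rho_X(D))_{D \in \D_X}$ as the basis of the second summand provided by Proposition~\ref{prop:sphmod} applied to $V$, which extends to a basis of $\lat(X)^*$ by any basis of $\Chi(G/K)^*$.

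For (1) $\Rightarrow$ (2), let $Y$ be the closed $G$-orbit of $X$, pick $y \in Y$, and set $L = G_y$; smoothness of $X$ forces $L$ to be reductive by Matsushima's criterion. Luna's étale slice theorem, together with the structure theorem for smooth affine spherical varieties with a unique closed orbit, yields $X \cong G \times^L W$ for a spherical $L$-module $W$. Any color of $G/L$ would pull back to a color of $X$ disjoint from the zero section $G \times^L \{0\} = Y$, contradicting quasi-vectoriality; since an affine homogeneous $G$-variety with no colors is a torus, we deduce $L \supseteq (G,G)$. As $L$ is then normal in $G$, the associated bundle $G \times^L W$ is the product $(G/L) \times W$ with $G/L$ acting by translation on the first factor and $L$ through the $G$-module structure of $W$ on the second.

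The main implication is (3) $\Rightarrow$ (2). Lemma~\ref{lemma:quasivec} with (3)(a) shows $X$ is quasi-vectorial. Pick a spherical $G$-module $V$ with $\soc(V) = \soc(X)$: by Proposition~\ref{prop:sphmod}, $(\rho_V(D))_{D \in \D_V}$ is a basis of $\lat(V)^*$, and the socle identification matches these with $(\rho_X(D))_{D \in \D_X}$ upon restriction to $\Z\Sigma^{sc}(X)$. Using (3)(b), complete $(\rho_X(D))_{D \in \D_X}$ to a basis of $\lat(X)^*$ and let $M$ be the sublattice spanned by the new basis vectors. The restriction map $\pi \colon \Z\rho_X(\D_X) \to (\Z\Sigma^{sc}(X))^*$ is surjective (it corresponds to restricting the basis of $\lat(V)^*$ to $\Z\Sigma^{sc}(V)$), so by subtracting suitable integer combinations of $\rho_X(\D_X)$ from each new generator we may arrange $M \subseteq (\Z\Sigma^{sc}(X))^\perp$. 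Dualize to get $\lat(X) = A \oplus B$ with $A = M^\perp \supseteq \Z\Sigma^{sc}(X)$; because $X$ is quasi-vectorial, every $\alpha^\vee|_{\lat(X)}$ (with $\alpha \in \sr$) is an integer combination of $\rho_X(D)$ for $D \in \col_X \subseteq \D_X$, hence $\alpha^\vee$ vanishes on $B$, so $B \subseteq \Chi(G)$. Thus $B$ is the character lattice of a torus $T'' = G/K$ with $K \supseteq (G,G)$.

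Finally, let $Y := V \times T''$ with the natural $G$-action. The argument of (2) $\Rightarrow$ (3) applied to $Y$ gives $\soc(Y) = \soc(V) = \soc(X)$ and $\lat(Y) = \lat(V) \oplus \Chi(T'') = A \oplus B = \lat(X)$, with the tuples $(\rho_Y(D))_{D \in \D_Y}$ and $(\rho_X(D))_{D \in \D_X}$ coinciding under the identifications. Hence $X$ and $Y$ share the same spherical system and lattice; Losev's uniqueness \cite[Theorem 1]{losev-uniqueness} identifies their open $G$-orbits, and \cite[Theorem 3.1]{knop-lv} then yields a $G$-equivariant isomorphism $X \cong Y$, exactly as at the end of the proof of Proposition~\ref{prop:sphmod}. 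The main obstacle will be the construction in the third paragraph: arranging $M$ inside $(\Z\Sigma^{sc}(X))^\perp$, and deducing from quasi-vectoriality that the complementary lattice $B$ consists of $G$-characters, which is what correctly isolates the torus factor.
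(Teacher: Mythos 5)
Your implications (2)$\Rightarrow$(1), (1)$\Rightarrow$(2) and (2)$\Rightarrow$(3) follow the paper's route (modulo two glossed points: the equality $\soc(X)=\soc(V)$ for the \emph{diagonal} $G$-action on a product is not formal and is what Lemma~\ref{lemma:socleindependent} is for, and in (1)$\Rightarrow$(2) you must still extend the $L$-module structure of the slice to a $G$-module structure, which uses $L\supseteq(G,G)$ and extension of characters of the radical, not just normality of $L$). The genuine gap is in the last paragraph of your (3)$\Rightarrow$(2). Equality of socles only gives that $\rho_V(D)$ and $\rho_X(D)$ agree \emph{after restriction to} $\Z\Sigma^{sc}(X)$; it carries no information about the lattices themselves. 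Hence there is no reason why $\lat(V)$ should equal your summand $A$, why $Y=V\times T''$ should be spherical (this needs $\lat(V)\cap B=0$), why $\lat(Y)$ should equal $\lat(X)$, or why the tuples $(\rho_Y(D))_{D\in\D_Y}$ and $(\rho_X(D))_{D\in\D_X}$ should agree on all of $\lat(X)$ rather than only on $\Z\Sigma^{sc}(X)$ --- and all of this is needed before Losev's uniqueness theorem and \cite[Theorem 3.1]{knop-lv} can be invoked. Concretely, take $X$ itself a spherical module and let $V$ be the same underlying module with the central torus acting through different characters: the socles coincide, but $\lat(V)\neq\lat(X)$, so your $Y$ is not isomorphic to $X$, even though (2) of course holds for $X$. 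Matching the full lattice and the full $\rho$-data is precisely the hard part of the proof of Proposition~\ref{prop:sphmod}: there the central action on the module is twisted by a homomorphism $\phi$, after enlarging the group to $G\times\GL(E)^G$ so that enough central characters are available, and Lemma~\ref{lemma:Sigma_N_D_1} is used to control $\lat(E)$ and to check $\phi^*(\lat(E))\subseteq\lat(X)$. Your argument must either reproduce this twisting, or avoid it as the paper does: using \cite[Theorems 4.1 and 4.4]{knop-lv} applied to the cone spanned by $\rho_X(\col_X)$ and $\V(X)$ inside the span of $\rho_X(\D_X)$, one first produces a $G$-morphism $X\to G/K$ with $K\supseteq(G,G)$, splits $X\cong G/K\times V'$ geometrically, and only then applies Proposition~\ref{prop:sphmod} to the fiber $V'$.

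A secondary unproven step in the same implication: the surjectivity of $\Z\rho_X(\D_X)\to(\Z\Sigma^{sc}(X))^*$. Restricting the basis $(\rho_V(D))_{D\in\D_V}$ of $\lat(V)^*$ only shows that $\Z\rho'_X(\D_X)$ equals the image of $\lat(V)^*\to(\Z\Sigma^{sc})^*$; what your adjustment of $M$ requires is that it contain the image of $\lat(X)^*$, and these two images are computed from a priori different lattices, so this needs an argument (or should be bypassed altogether by the geometric splitting above). Your lattice observation that $B$ consists of $G$-characters, via quasi-vectoriality and the fact that each $\alpha^\vee|_{\lat(X)}$ is an integral combination of color functionals, is correct and is a nice algebraic shadow of the paper's construction of $Y=G/K$, but by itself it does not yield the product decomposition of $X$.
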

\begin{proof}
It is harmless to assume in the whole proof that $G = (G,G)\times G^r$.

(\ref{cor:quasivec:qv}) $\Rightarrow$ (\ref{cor:quasivec:GKV}).  Since $X$ is smooth and affine, it is isomorphic to a vector bundle
\[
X \cong G\times^K V
\]
on its closed $G$-orbit $G/K$, with fiber a $K$-module $V$, where $K$ is a reductive group and $V$ is spherical under the action of the connected component of $K$ containing the identity; see \cite[Corollary 2.2]{knop&bvs-classif}. The closed orbit $G/K$ has no colors, otherwise the inverse image in $X$ of one of its colors would be a color of $X$ not containing $G/K$. It follows from \cite[Proposition 2.4]{knop-asymptotic} that $K\supset (G,G)$, thus $K = (G,G)\times R$ where $R\subseteq G^r$. The inclusion of diagonalizable groups $R\to G^r$ has a right inverse, hence we may define an action of $G$ on $V$ extending that of $K$. Therefore $X\cong G/K\times V$.

(\ref{cor:quasivec:GKV}) $\Rightarrow$ (\ref{cor:quasivec:qv}). 
We only have to show that all colors of $V$ contain $0$, its unique closed $G$-orbit. This is Proposition~\ref{prop:colors_E}.  

(\ref{cor:quasivec:GKV}) $\Rightarrow$ (\ref{cor:quasivec:comb}). We know that $K = (G,G)\times R$ where $R\subseteq G^r$, and $X\cong(G^r/R)\times V$. Let us consider the action of $C\times G$ on $X$ where $C\cong G^r$ and acts only on the factor $G^r/R$ via its isomorphism with $G^r$, and $G$ acts only on the factor $V$. Denote the newly obtained $C\times G$-variety by $X'$.

Then $\soc(X)=\soc(X')$ by Lemma~\ref{lemma:socleindependent}. On the other hand $X'$ is of the form $X_1\times X_2$ where one factor of $C\times G$ acts only on $X_1$, and the other only on $X_2$. Therefore $\soc(X')=\soc(G^r/R)\times\soc(V)$. Since $C$ is abelian $\soc(G^r/R)$ is trivial, and $\soc(X')=\soc(V)$, which is condition (\ref{cor:quasivec:comb:soc}).

Then, notice that
\[
\CC[X] = \CC[G/K] \otimes_\CC \CC[V],
\]
which implies that $\Lambda(X) = \Lambda(G/K)\oplus \Lambda(V)$. Since $G/K$ is homogeneous and has no color, the map
\[
\begin{array}{ccc}
\D_V & \to & \D_X \\
D & \mapsto & G/K \times D
\end{array}
\]
is a bijection, and the element $\rho_X(G/K\times D)$ is equal to $\rho_V(D)$ on $\Lambda(V)$ and is zero on $\Lambda(G/K)$. Thanks to Proposition~\ref{prop:sphmod} the $|\D_V|$-tuple $(\rho_V(D))_{D\in\D_V}$ is a basis of $\Lambda(V)^*$, and it follows that $(\rho_X(D))_{D\in\D_X}$ can be completed to a basis of $\Lambda(X)^*$. This shows (\ref{cor:quasivec:comb:bas}).

(\ref{cor:quasivec:comb}) $\Rightarrow$ (\ref{cor:quasivec:GKV}). Since the socle of $X$ is that of a spherical module, the set $\col_X$ is the whole set of colors of $X$ by Lemma~\ref{lemma:quasivec} and Proposition~\ref{prop:colors_E}. For the same reason $\rho_X(\col_X)$ doesn't contain $0$.

Consider the vector subspace $N_1$ spanned by $\rho_X(\D_X)$ in the vector space $N=\Lambda(X)^*\otimes_\ZZ\QQ$. We claim that $N_1$ is generated as a convex cone by $\rho_X(\col_X)$ together with $N_1\cap \V(X)$. To show the claim, we observe first of all that $\rho_X(\V_X)$ is contained in $N_1\cap\V(X)$. Then we know that $\V(X)$ together with $\rho_X(\col(X))$ generates $N$ as a convex cone, thanks to \cite[Theorem 4.4]{knop-lv} applied to the $G$-equivariant map $X\to \{\text{pt}\}$. At this point the claim follows because $\V(X)$ is a convex cone.

Then, by \cite[Theorem 4.4]{knop-lv}, there exists a $G$-equivariant map $X_0\to Y$, where $X_0$ is the open $G$-orbit of $X$, the variety $Y$ is spherical and $G$-homogeneous, such that the pull-back of functions induces an identification of $\Lambda(Y)$ with the direct summand of $\Lambda(X)$ where $N_1$ is zero, and $\col_X$ is exactly the set of colors of $X_0$ mapped dominantly onto $Y$ (up to identifying the colors of $X$ and of $X_0$).

It follows that $Y$ has no color, hence $(G,G)$ acts trivially on $Y$ and $Y=G/K$ where $K\supseteq (G,G)$. Moreover, since $\rho_X(\D_X)\subseteq N_1$, by \cite[Theorem 4.1]{knop-lv} the map $X_0\to Y$ extends to a $G$-equivariant map $X\to Y$.

Now we can write $X$ as a $G$-equivariant bundle over $Y=G/K$:
\[
X = G\times^K V
\]
where $V$ is the fiber over $eK\in G/K$ of the map $X\to Y$. Since the inclusion $K\to G$ has a right inverse, the variety $V$ is also a $G$-variety and the bundle is trivial:
\[
X = G/K \times V.
\]
Hence $\Lambda(X)=\Lambda(G/K)\oplus \Lambda(V)$, and $N_1 = \Lambda(G/K)^\perp$. With the same argument as in the previous implication we have that $\soc(X)=\soc(V)$ and that $(\rho_V(D))_{D\in\D_V}$ is identified with $(\rho_X(D))_{D\in\D_X}$ and is a basis of $\Lambda(V)^*$.

Now $V$ satisfies the hypotheses of Proposition~\ref{prop:sphmod}, so it is a spherical $G$-module and the proof is complete.
\end{proof}

In order to apply Corollary~\ref{cor:quasivec} to a general affine spherical variety, we need to introduce {\em localization} of spherical varieties.

\begin{definition} \label{def:socloc}
Let
\[
\soc(X)=(S^p(X), \Sigma^{sc}(X), \A(X), \col_X, \V_X, \rho'_X\colon\D_X\to (\Z\Sigma^{sc}(X))^*)
\]
be the socle of a simple spherical $G$-variety $X$, and let $S'\subseteq S$. The \textbf{ localization of $\soc(X)$ at $S'$} is defined as follows:
\[
\soc(X)_{S'}=(S^p(X)_{S'}, \Sigma^{sc}(X)_{S'}, \A(X)_{S'}, \col_{X,S'}, \V_{X,S'}, \rho'_{X,S'})
\]
where
\begin{enumerate}
\item $S^p(X)_{S'} = S^p(X)\cap S'$,
\item $\Sigma^{sc}(X)_{S'}=\Sigma^{sc}(X)\cap \Z S'$,
\item $\A(X)_{S'}= \bigcup_{\alpha\in S'\cap \Sigma(X)} \A(X,\alpha)$,
\item $\rho'_{X,S'}$ is the restriction of $\rho'_X$ to $\Z \Sigma^{sc}(X)_{S'}$,
\item $\col_{X,S'}$ is the set of colors of the spherical system $(S^p(X)_{S'}, \Sigma^{sc}(X)_{S'}, \A(X)_{S'})$ (notice that $\col_{X,S'}$ is naturally a subset of the set of all colors of $X$),
\item $\V_{X,S'} = \V_X\cup (\col_X\smallsetminus \col_{X,S'})$.
\end{enumerate}
\end{definition}

We recall the local structure theorem for spherical varieties, see e.g.~\cite[Theorem 2.3 and Proposition 2.4]{knop-asymptotic}.

\begin{theorem}\label{thm:local}
Let $X$ be a spherical variety and $Y\subseteq X$ a $G$-orbit. Let $P$ be the stabilizer of all colors of $X$ not containing $Y$, and let $L$ be a Levi subgroup of $P$. Then there exists an affine, $L$-stable and $L$-spherical, locally closed subvariety $Z$ of $X$ such that
\[
\begin{array}{ccc}
P^u\times Z & \to &  X_{Y,B} \\
(p,z) &\mapsto & pz
\end{array}
\]
is an isomorphism, where $X_{Y,B}$ is the open subset of $X$ defined as
\[
X_{Y,B} = \{ x\in X \;|\; \overline{Bx}\supseteq Y\}.
\]
\end{theorem}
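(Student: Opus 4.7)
The proof is the classical local structure theorem of Brion--Luna--Vust. The first step is to reduce to the case where $Y$ is the \emph{unique} closed $G$-orbit of $X$: by Sumihiro's theorem, the set $X_{Y,G} := \{x \in X : Y \inn \overline{Gx}\}$ is an open $G$-stable quasi-projective subvariety of $X$, and replacing $X$ by $X_{Y,G}$ does not alter $X_{Y,B}$, the parabolic $P$, or a Levi $L$ of $P$. After this reduction, I would $G$-equivariantly embed $X$ as a locally closed subvariety of $\PP(V)$ for a finite-dimensional $G$-module $V$.

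Next, I would identify $X_{Y,B}$ combinatorially as the complement in $X$ of the union of the colors $D_1,\ldots,D_k$ of $X$ that do not contain $Y$. After passing to a suitable cover if necessary, each $D_i$ has a $B$-semi-invariant defining section $f_i$ of some $B$-weight $\chi_i$, and the common stabilizer of the divisors $\Div(f_i)$ is exactly $P$. Choosing a highest weight vector $v_0 \in V$ whose image lies in the open $B$-orbit of $Y$, the stabilizer of $[v_0]$ is a parabolic containing $B$ whose unipotent radical is $P^u$. I would then decompose $V = \C v_0 \oplus V'$ as an $L$-module and let $V'' \inn V'$ be the sum of the $T$-weight spaces of weight different from that of $v_0$. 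The slice is defined as
\[
Z := X_{Y,B} \cap \bigl(\PP(V) \setminus \PP(V'')\bigr),
\]
which is an $L$-stable locally closed subvariety of $X_{Y,B}$, affine because it sits inside the affine chart $\{v_0^* \neq 0\}$ of $\PP(V)$ and because $X_{Y,B} \inn X$ is the complement of the divisors $D_i$.

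Finally I would verify that the action map $P^u \times Z \to X_{Y,B}$, $(p,z) \mapsto pz$, is a $P^u$-equivariant isomorphism. The key is to show that every $P^u$-orbit in $X_{Y,B}$ meets $Z$ in exactly one point. This reduces to checking that for any $x \in X_{Y,B}$ there is a unique $u \in P^u$ with $u^{-1}x \in Z$; this is done at the level of $\PP(V)$ using that $P^u$ fixes $[v_0]$ modulo terms in $V''$, together with the fact that $X_{Y,B}$ avoids $\PP(V'')$. Once this bijection is established, smoothness of the map and $L$-sphericity of $Z$ follow formally: the field $\C(Z)^{B \cap L}$ is identified with $\C(X_{Y,B})^B = \C(X)^B = \C$, so $Z$ is $L$-spherical of rank equal to the rank of $X$.

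The main obstacle is the transversality step, namely producing the unique factorization $x = uz$ with $u \in P^u$, $z \in Z$. This uses in an essential way the choice of $v_0$ as a highest weight vector for $P$ and the compatibility of the $f_i$ with the projective embedding; without these, one obtains only an open piece of $X_{Y,B}$, not the full set. Handling the case when $X$ is not globally projective, which forces the passage to a finite cover or to the $X_{Y,G}$-reduction described above, is the other technical point that needs care.
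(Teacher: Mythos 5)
Note first that the paper does not prove this statement at all: it is quoted from \cite[Theorem 2.3 and Proposition 2.4]{knop-asymptotic}, i.e.\ it is the Brion--Luna--Vust local structure theorem, so your proposal has to be measured against that standard argument, whose general strategy (reduction to $X_{Y,G}$ via Sumihiro, projective embedding, a slice inside an affine chart) you do follow. As written, however, it has genuine gaps at exactly the two points where the content of the theorem lies. The first is the choice of the pivot vector and the identification of $P$. A highest weight vector $v_0$ spans a $B$-stable line, so $[v_0]$ is a $B$-fixed point of $\mathbb{P}(V)$; it therefore lies in the \emph{closed} $B$-orbit of $Y$, not the open one (unless $Y$ is a point), so your choice is not even well posed. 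More importantly, the assertion that the stabilizer of $[v_0]$ is a parabolic with unipotent radical $P^u$ is precisely what must be arranged, and it fails for an arbitrary equivariant embedding and an arbitrary highest weight vector: one has to adapt the embedding to the divisor. Concretely, after replacing $X$ by the normal quasi-projective $X_{Y,G}$, one chooses a $G$-linearized line bundle $\mathcal L$ admitting a $B$-semiinvariant section $s$ whose zero locus is exactly the union of the colors $D_1,\dots,D_k$ not containing $Y$ (e.g.\ a suitable multiple of $\sum_i D_i$, or a power of an ample bundle), sets $V=H^0(X_{Y,G},\mathcal L)^*$, and works in the chart $\{s\neq 0\}$; then $X_{Y,B}=X_s$ and $P$ is \emph{by construction} the stabilizer of the line $\C s\subseteq V^*$, which is what ties the parabolic of the statement to the chart. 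You acknowledge this point (``compatibility of the $f_i$ with the projective embedding'') but never set it up, and without it the claimed factorization is attached to the wrong parabolic and simply fails.

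The second gap concerns the slice and the factorization itself. With $V''$ defined as the span of the $T$-weight spaces of $V'$ of weight different from that of $v_0$, the space $V''$ is a $T$-submodule but in general not an $L$-submodule, so your $Z$ is not visibly $L$-stable; nor is $Z$ shown to be closed in the affine chart, so its affineness is not established either. Above all, the decisive step --- existence and uniqueness of the factorization $x=uz$, $u\in P^u$, $z\in Z$, on all of $X_{Y,B}$ --- is only asserted. In the standard proofs this is where the work happens: either one proves the statement first for the chart $\mathbb{P}(V)_s$ itself, using the decomposition of $V$ under a one-parameter subgroup defining $P$ to exhibit $\mathbb{P}(V)_s$ $P$-equivariantly as $P^u$ times a linear slice, and then descends to $X_s$ because a closed $P^u$-stable subvariety of a trivial $P^u$-bundle is the preimage of its image in the base; or one uses, as Knop does, a $P^u$-equivariant morphism $X_s\to(\mathfrak p^u)^*$ of the form $x\mapsto\bigl(\xi\mapsto (\xi s)(x)/s(x)\bigr)$ together with the fact that the relevant $P^u$-orbit in $(\mathfrak p^u)^*$ is closed with orbit map an isomorphism, and takes $Z$ to be a fibre. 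Your sketch contains neither mechanism; the remark that ``$P^u$ fixes $[v_0]$ modulo $V''$'' yields neither existence nor uniqueness of the factorization. (The last step, deducing $L$-sphericity of $Z$ from $\C(X)^B=\C$ once the isomorphism is in hand, is fine.)
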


\begin{definition}
Let $X$ be a spherical $G$-variety and $Y\subseteq X$ be a $G$-orbit. We define
\[
X_{Y,G}=\{x\in X\;|\; \overline{Gx}\supseteq Y\}.
\]
\end{definition}

Notice that $Y$ is the unique closed $G$-orbit of $X_{Y,G}$, which is open in $X$ and $G$-stable.

\begin{proposition}\label{prop:socloc}
Let $X$ be a spherical $G$-variety and $Y\subseteq X$ a $G$-orbit. Let $L$ and $Z$ be as in Theorem~\ref{thm:local} with the additional assumption that $L$ contains $T$, and let $S'\subseteq S$ be the set of simple roots of $L$. Then the socle of $Z$ as a spherical $L$-variety is the localization of $\soc(X_{Y,G})$ at $S'$. In particular, $Z$ is quasi-vectorial.
\end{proposition}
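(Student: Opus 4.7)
The plan is to apply the local structure Theorem~\ref{thm:local} and match each component of the socle one by one. I would first reduce to the case $X=X_{Y,G}$, a simple spherical $G$-variety with unique closed orbit $Y$, since the socle invariants depend only on $X_{Y,G}$. The local structure theorem then provides a $P$-equivariant isomorphism $P^u\times Z\to X_{Y,B}$, and two immediate consequences form the backbone of the argument: (a) $B$-stable prime divisors of $X_{Y,B}$ correspond bijectively to $B_L$-stable prime divisors of $Z$ (writing $B_L=B\cap L$); (b) pulling back $B_L$-semiinvariants on $Z$ to $B$-semiinvariants on $X_{Y,B}$ preserves the $T$-weight, yielding $\Lambda(X)=\Lambda(Z)$. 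Moreover, since the open $B$-orbit of $X_{Y,B}$ corresponds to $P^u\times O$ where $O$ is the open $B_L$-orbit of $Z$, the stabilizer $P_Z$ of $O$ in $L$ equals $P_X\cap L$; intersecting sets of simple roots gives $S^p(Z)=S^p(X)\cap S'$.

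Next I would identify the $B$-stable prime divisors of $X$ that meet $X_{Y,B}$: since $X_{Y,B}=\{x:\overline{Bx}\supseteq Y\}$, such a prime divisor $D$ meets $X_{Y,B}$ if and only if $D\supseteq Y$. Because $Y$ is the unique closed $G$-orbit of $X$, every $G$-stable prime divisor of $X$ contains $Y$; among colors, by definition, those containing $Y$ form exactly $\col_X$. Combining this with (a), we obtain a natural bijection $\D_X\leftrightarrow\D_Z$, where $\D_Z$ denotes the set of all $B_L$-stable prime divisors of $Z$. Under this bijection the closed $L$-orbit of $Z$ (the image of $Y\cap X_{Y,B}$, which is the dense $B$-orbit of $Y$) is contained in every element of $\D_Z$, so $Z$ is quasi-vectorial.

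The main technical step is to identify the spherical roots. For this I would use $\Lambda(Z)=\Lambda(X)$ together with standard Luna--Vust theory: a $G$-invariant valuation of $\CC(X)$ restricts to an $L$-invariant valuation of $\CC(Z)=\CC(X_{Y,B})^{P^u}$ with the same image in $\Hom(\Lambda,\Q)$, and every $L$-invariant valuation of $\CC(Z)$ arises this way. Dualizing yields $\Sigma(Z)=\Sigma(X)\cap\Z S'$, and Proposition~\ref{prop:doubling_sr}---whose doubling rules depend only on the support of each spherical root---upgrades this to $\Sigma^{sc}(Z)=\Sigma^{sc}(X)\cap\Z S'$. Tracking the bijection on colors moved by simple roots in $S'$ then gives $\A(Z)=\A(X)_{S'}$. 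At this point Definition~\ref{def:colors_sph_system} applied to the three matched ingredients $(S^p(X)\cap S',\Sigma^{sc}(X)\cap\Z S',\A(X)_{S'})$ reconstructs both $\col_Z$ and $\col_{X,S'}$ by the same recipe, so they coincide; the remaining elements of $\D_Z$ form $\V_Z=\V_X\cup(\col_X\setminus\col_{X,S'})$; and the map $\rho'_Z$ agrees with $\rho'_{X,S'}$ because the valuation of $D\in\D_Z$ along any $\sigma\in\Sigma^{sc}(Z)$ is read off from the same $B$-semiinvariant rational function that represents it on $X$. The spherical-root matching is the hardest step; once it is in place, the color/divisor splitting and compatibility of $\rho'$ reduce to combinatorial bookkeeping against Definition~\ref{def:colors_sph_system}.
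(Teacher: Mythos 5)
Most of your reduction is sound and matches the standard picture: $X_{Y,B}\cong P^u\times Z$ gives $\lat(Z)=\lat(X)$, $S^p(Z)=S^p(X)\cap S'$, a bijection between the $B$-stable prime divisors of $X$ containing $Y$ and the $B\cap L$-stable prime divisors of $Z$, and hence quasi-vectoriality of $Z$. The genuine gap is exactly the step you call the hardest. You assert that every $L$-invariant valuation of $\C(Z)$ arises by restriction from a $G$-invariant valuation of $\C(X)$ with the same image in $\Hom(\lat(X),\Q)$, and that ``dualizing'' yields $\Sigma(Z)=\Sigma(X)\cap\Z S'$. The premise is false in general: restriction only gives $\V(X)\subseteq\V(Z)$, and the inclusion is typically strict. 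For instance, take $G=\SL(2)$ and $X=\PP^1\times\PP^1$ with open orbit $G/T$ and closed orbit $Y$ the diagonal; then $P=B$, $L=T$, $S'=\emptyset$, and $Z\cong\C$ is a one-dimensional $T$-variety, so $\V(Z)=\Hom(\lat(X),\Q)$ is the whole line while $\V(X)$ is a half-line. Moreover, even if your premise held it would give $\V(Z)=\V(X)$ and hence $\Sigma(Z)=\Sigma(X)$, not $\Sigma(X)\cap\Z S'$, so the ``dualizing'' step is a non sequitur. Since the equality $\Sigma^{sc}(Z)=\Sigma^{sc}(X)\cap\Z S'$ is what drives the rest (it is needed to know which divisors of $Z$ are genuine colors, i.e.\ to get $\A(Z)=\A(X)_{S'}$, $\col_Z=\col_{X,S'}$ and $\V_Z=\V_X\cup(\col_X\setminus\col_{X,S'})$), the subsequent ``bookkeeping'' silently relies on the unproved step. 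A smaller inaccuracy: the doubling rules of Proposition~\ref{prop:doubling_sr} do not depend only on supports (they involve $S^p$, the colors moved by simple spherical roots, and the root lattice), though with $S^p(Z)=S^p(X)\cap S'$ in hand this is repairable.

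For comparison, the paper does not reprove any of this: it deduces the whole proposition from \cite[Lemma 3.5.5]{losev-uniqueness}, applied with the set $\mathcal D'$ of that lemma equal to the colors of $X$ not containing $Y$; Losev's lemma is precisely the computation of the invariants of the local slice, including its spherical roots. To make your self-contained route work you would need an actual argument for the valuation cone of $Z$ (for example via the root monoid of $\C[X_{Y,B}]^{P^u}$ and its relation to that of $X$, or via Knop's results on localization), rather than restriction of invariant valuations.
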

\begin{proof}
The proposition follows from \cite[Lemma 3.5.5]{losev-uniqueness}, where we set $\mathcal D'$ of loc.cit.\ equal to the set of colors of $X$ not containing $Y$.
\end{proof}

We come to the smoothness criterion. We remark that spherical modules were classified in \cite{kac-rmks,brion-repexc,benson-ratcliff-mf, leahy}; see also~\cite{knop-rmks}. Their socles can be deduced from the list in Table~\ref{table:socles}, thanks to Theorem~\ref{thm:soclesmodules} below. The same list is found in \cite{gagliardi-camus}, where {\em Luna diagrams} are used to denote spherical systems.

\begin{theorem}\label{thm:camus_smoothnes_crit}
Let $X$ be a spherical $G$-variety and $Y\subseteq X$ be a $G$-orbit. Then $X$ is smooth in all points of $Y$ if and only if
\begin{enumerate}
\item the localization of $\soc(X_{Y,G})$ at $S'$ (with the notations of Proposition~\ref{prop:socloc}) is the socle of a spherical module.\label{thm:camus_smoothness_crit:socle}
\item the $|\D_{X_{Y,G}}|$-tuple $(\rho_X(D))_{D\in\D_{X_{Y,G}}}$ can be completed to a basis of $\Lambda(X)^*$. \label{thm:camus_smoothness_crit:basis}
\end{enumerate}
\end{theorem}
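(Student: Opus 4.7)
The plan is to use the local structure theorem (Theorem~\ref{thm:local}) to reduce smoothness of $X$ along $Y$ to smoothness of the affine $L$-spherical slice $Z$, and then to apply Corollary~\ref{cor:quasivec} to $Z$ and translate the resulting combinatorial conditions back to $X$.

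First, since smoothness of $X$ along $Y$ depends only on an open $G$-stable neighborhood of $Y$, I would replace $X$ by $X_{Y,G}$ and assume $X$ is simple with closed orbit $Y$. Applying Theorem~\ref{thm:local}, choose $Z$ so that $P^u\times Z\cong X_{Y,B}$; observe that $X=G\cdot X_{Y,B}$, that $Y\cap X_{Y,B}=P^u\cdot(Y\cap Z)$, and that $Y\cap Z$ is the unique closed $L$-orbit of $Z$. Since $P^u$ is an affine space, smoothness of $X_{Y,B}$ along $Y\cap X_{Y,B}$ is equivalent to smoothness of $Z$ along $Y\cap Z$, and by $G$-equivariance this is the same as smoothness of $X$ along $Y$. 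Moreover, since $Z$ is simple as an $L$-variety, its singular locus is closed and $L$-stable, hence either empty or containing the closed $L$-orbit; therefore smoothness of $Z$ along $Y\cap Z$ is equivalent to global smoothness of $Z$.

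By Proposition~\ref{prop:socloc}, $Z$ is quasi-vectorial and $\soc(Z)=\soc(X_{Y,G})_{S'}$. Applying Corollary~\ref{cor:quasivec} to this quasi-vectorial variety, smoothness of $Z$ is equivalent to the conjunction of (i) $\soc(Z)$ being the socle of a spherical $L$-module, and (ii) the tuple $(\rho_Z(D))_{D\in\D_Z}$ extending to a basis of $\Lambda(Z)^*$. The identification $\soc(Z)=\soc(X_{Y,G})_{S'}$ immediately turns (i) into hypothesis~(\ref{thm:camus_smoothness_crit:socle}). For (ii), I would unpack Definition~\ref{def:socloc} to obtain the set-theoretic equalities
\[\D_Z=\col_{X_{Y,G},S'}\cup\V_{X_{Y,G},S'}=\col_{X_{Y,G}}\cup\V_{X_{Y,G}}=\D_{X_{Y,G}},\]
and I would check that pulling back $B$-eigenfunctions along $X_{Y,B}\cong P^u\times Z\hookrightarrow X$ followed by restriction to $Z$ provides an identification $\Lambda(X)=\Lambda(Z)$ under which $\rho_X(D)$ matches $\rho_Z(D)$ for every $D\in\D_{X_{Y,G}}=\D_Z$. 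Under this identification (ii) becomes condition~(\ref{thm:camus_smoothness_crit:basis}).

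The main obstacle is the geometric reduction in the first paragraph: converting smoothness of $X$ along $Y$ into global smoothness of $Z$. This depends both on the precise content of Theorem~\ref{thm:local} (to get a product decomposition and to identify $Y\cap Z$ as the closed $L$-orbit of $Z$) and on the topological observation that the singular locus of a simple spherical variety meets every orbit closure as soon as it is nonempty. Once this reduction is in place, the combinatorial conclusion is a formal consequence of Corollary~\ref{cor:quasivec} and the compatibility of socle-localization with the invariants $\D$ and $\Lambda$.
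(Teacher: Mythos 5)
Your proposal is correct and follows essentially the same route as the paper: reduce to the slice $Z$ of the local structure theorem, use Proposition~\ref{prop:socloc} to identify $\soc(Z)$ with the localization of $\soc(X_{Y,G})$ at $S'$, and conclude by Corollary~\ref{cor:quasivec}. The details you add (translating smoothness along $Y$ into global smoothness of $Z$ via the closed $L$-stable singular locus, and matching $\D_Z$, $\Lambda(Z)$, $\rho_Z$ with $\D_{X_{Y,G}}$, $\Lambda(X)$, $\rho_X$) are exactly the steps the paper leaves implicit, and they are carried out correctly.
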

\begin{proof}
Let $Z$ be as in Proposition~\ref{prop:socloc}. The smoothness of $X$ along $Y$ is equivalent to the smoothness of $Z$, which is an affine simple spherical variety whose socle is the localization of $\soc(X_{Y,G})$ at $S'$ by Proposition~\ref{prop:socloc}. The theorem now follows  from Corollary~\ref{cor:quasivec}.
\end{proof}

\begin{remark}
Example~\ref{ex:need_tuple} in Section~\ref{sec:smoothwm} below shows that it is not possible to replace, in Theorem~\ref{thm:camus_smoothnes_crit}, the $|\D_{X_{Y,G}}|$-tuple $(\rho_X(D))_{D\in\D_{X_{Y,G}}}$ with the {\em set} $\rho_X(\D_{X_{Y,G}})$.
\end{remark}

Finally, we relate the socle of any spherical module to those in Table~\ref{table:socles}. First we recall some definitions.

\begin{definition}[\cite{camus}]
Consider the socles $\soc_i=(S^p_i,\Sigma^{sc}_i,\A_i,\Delta_i,\V_i,\rho'_i\colon \D_i\to(\ZZ\Sigma_i^{sc})^*)$ for $i\in\{1,2\}$ of two simple spherical $G_i$-varieties. They are \textbf{isomorphic} if they are equal up to an isomorphism $\varphi$ of the Dynkin diagrams of $G_1$ and $G_2$, i.e.\ if $S^p_2=\varphi(S^p_1)$ and $\varphi(\Sigma^{sc}_1)=\Sigma^{sc}_2$ (where we have extended $\varphi$ to a map between the two root lattices), and if $\A_1$, $\Delta_1$, $\V_1$ can be identified resp.\ with $\A_2$, $\Delta_2$, $\V_2$ in such a way that $\langle\rho'_1(D), \sigma\rangle = \langle\rho'_2(D),\varphi(\sigma)\rangle$ for all $D\in \A_1\cup\Delta_1\cup V_1$ and all $\sigma\in\Sigma^{sc}_1$.
\end{definition}

Notice that the above definition includes the case where $G_1=G_2$ and $\varphi$ is an automorphism of its Dynkin diagram.

\begin{definition}[{\cite[Section~5]{knop-rmks}}]\label{def:geomequiv}
Two representations $\eta_i\colon G_i\to \GL(V_i)$ for $i\in\{1,2\}$ are \textbf{geometrically equivalent} if there is an isomorphism $\Psi\colon V_1\to V_2$ inducing the isomorphism $\GL(\Psi)\colon \GL(V_1)\to \GL(V_2)$ such that $\GL(\Psi)(\eta_1(G_1))=\eta_2(G_2)$.
\end{definition}

\begin{lemma}\label{lemma:socleiso}
Let $\eta_i\colon G_i\to \GL(V_i)$ for $i\in\{1,2\}$ be two geometrically equivalent and spherical representations, such that no simple normal subgroup of $G_i$ acts trivially on $V_i$. Then the socles of $V_1$ and of $V_2$ are isomorphic.
\end{lemma}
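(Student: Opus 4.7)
My plan is to use the geometric equivalence $\Psi$ to reduce to a situation in which both groups $G_i$ surject onto a common reductive subgroup $H$ of $\GL(V)$ for a shared $V$, and then to show that the socle of $V$ as a $G_i$-variety is determined by the socle of $V$ as an $H$-variety, via the natural isomorphism of root data induced by the isogeny $(G_i,G_i)\to (H,H)$.

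First I would use $\Psi$ to identify $V_1$ and $V_2$ with a single vector space $V$ in such a way that $H:=\eta_1(G_1)=\eta_2(G_2)$ as closed subgroups of $\GL(V)$. The image $H$ is automatically connected and reductive. The hypothesis that no simple normal subgroup of $G_i$ acts trivially on $V_i$ implies that $\ker\eta_i\cap(G_i,G_i)$ is a finite central subgroup of $(G_i,G_i)$, so $\eta_i$ restricts to an isogeny $(G_i,G_i)\to (H,H)$. I would then pick a maximal torus $T_H$ and a Borel subgroup $B_H\supseteq T_H$ of $H$, lift them to a maximal torus $T_i$ and a Borel $B_i\supseteq T_i$ of $G_i$ with $\eta_i(T_i)=T_H$ and $\eta_i(B_i)=B_H$, and use the isogeny of derived subgroups to obtain, by pullback of characters, a natural isomorphism of root systems $\varphi_i\colon\Phi(H)\to\Phi(G_i)$ sending simple roots to simple roots. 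The composition $\varphi:=\varphi_2\circ\varphi_1^{-1}$ will be the candidate isomorphism of Dynkin diagrams exhibiting $\soc(V_1)\cong\soc(V_2)$.

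The heart of the argument is to verify that each of the six components of $\soc(V_i)$ corresponds, via $\varphi_i$, to the analogous datum for the $H$-action on $V$. The key observation is that $\ker\eta_i$ acts trivially on $V$, so the $B_i$-orbits and the $G_i$-orbits on $V$ coincide with the $B_H$-orbits and the $H$-orbits, respectively. This yields natural bijections between colors and between invariant prime divisors, so $\A(V_i)\cong \A(V_H)$, $\col_{V_i}\cong \col_{V_H}$ and $\V_{V_i}\cong \V_{V_H}$, where the subscript records which group-action is considered. The equality $S^p(V_i)=\varphi_i(S^p(V_H))$ follows because the stabilizer of the open Borel orbit in $G_i$ is $\eta_i^{-1}(P_H)$. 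For the lattice $\lat(V_i)$, and in particular for $\Sigma^{sc}(V_i)$, I would use that a rational function on $V$ is a $B_i$-eigenvector if and only if it is a $B_H$-eigenvector, and that its $B_i$-eigenvalue is the pullback of its $B_H$-eigenvalue along $\eta_i|_{T_i}$; this pullback, restricted to the root lattice of $H$, is exactly $\varphi_i$. Finally, the maps $\rho'_{V_i}$ and $\rho'_{V_H}$ are compatible through $\varphi_i$ because they are computed by evaluating the same valuations on $B$-semiinvariant rational functions, whose $B_i$- and $B_H$-weights correspond under $\varphi_i$.

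The step I expect to be the main obstacle is the bookkeeping for the character lattices of $T_i$ and $T_H$: one must check that $\Sigma^{sc}(V_i)$, which a priori lies in $\lat(V_i)\subseteq\Chi(T_i)$, actually lies in the image of the pullback $\Chi(T_H)\hookrightarrow\Chi(T_i)$, and that it corresponds to $\Sigma^{sc}(V_H)$ exactly under $\varphi_i$ (and not merely up to positive rational multiples). Once this is established, the composition $\varphi_2\circ\varphi_1^{-1}$ delivers the required isomorphism $\soc(V_1)\cong\soc(V_2)$ in the sense of the definition of isomorphism of socles.
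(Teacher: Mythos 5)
Your proposal is correct and follows essentially the same route as the paper: identify $V_1$ and $V_2$ via $\Psi$ so that $\eta_1(G_1)=\eta_2(G_2)$, choose compatible Borel subgroups and maximal tori, use the hypothesis on simple normal factors to identify the Dynkin diagrams, and observe that the socle computed for the $G_i$-action coincides with the one computed for the action of the common image group. The only difference is one of exposition: the paper asserts this last equality of socles directly, whereas you verify it component by component (colors, invariant divisors, $S^p$, lattice, $\Sigma^{sc}$, $\rho'$), which is a harmless elaboration of the same argument.
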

\begin{proof}
Fix a map $\Psi$ as in Definition~\ref{def:geomequiv}. Using $\Psi$ we may identify $V_1$ and $V_2$ as vector spaces, denoting them both $V$. This yields $\eta_1(G_1)=\eta_2(G_2)$ as subgroups of $\GL(V)$; denote them both by $G$.

Choose a Borel subgroup $B_1$ and a maximal torus $T_1\subseteq B_1$ of $G_1$. We fix the Borel subgroup $\eta_1(B_1)$ and the maximal torus $\eta_1(T_1)$ of $G$, and we fix the Borel subgroup $\eta_2^{-1}(\eta_1(B_1))$ and the maximal torus $\eta_2^{-1}(\eta_1(T_1))$ of $G_2$ (these groups are connected because the kernel of $\eta_2$ is central).

Since no simple normal subgroup of $G_i$ acts trivially on $V$, we obtain an identification of the Dynkin diagrams of $G_1$, $G$, and $G_2$. Moreover, for all $i\in\{1,2\}$ the socle $\soc(V)$ defined with respect to the action of $G_i$ is equal to the socle defined with respect to the action of $\psi_i(G_i)$.

At this point, considered under the action of $G$, the modules $V_1$ and $V_2$ are the same spherical module under the action of the same group $G$, thus they have the same socle. Considered under the action of $G_i$, their socles are equal up to the above identification of the Dynkin diagrams, which finishes the proof.
\end{proof}

\begin{remark}
Observe that the converse to Lemma~\ref{lemma:socleiso} does not hold. For example, the standard actions of $\SL(n)$ and $\GL(n)$ on $\C^n$ have isomorphic socles but are not geometrically equivalent.
\end{remark}

\begin{theorem}\label{thm:soclesmodules}
Let $V$ be a spherical $G$-module. Then its socle is, up to isomorphism, a product of socles of Table~\ref{table:socles}.
\end{theorem}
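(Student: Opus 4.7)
The plan is to reduce the statement to the classification of spherical modules and then to a case-by-case verification of the socles of the indecomposable pieces. First, I would invoke the classification of spherical modules due to Kac, Brion, Benson--Ratcliff, and Leahy, referenced in the paragraph preceding the theorem. Up to geometric equivalence in the sense of Definition~\ref{def:geomequiv}, this classification expresses any spherical $G$-module as a direct sum of ``indecomposable'' spherical modules $V_i$ for groups $G_i$, together possibly with simple normal factors of $G$ acting trivially. Writing $V = V_1 \oplus \cdots \oplus V_k$ accordingly, I would invoke the product property of socles stated immediately after Definition~\ref{def:simple_quasivec_socle}, which gives
\[
\soc(V) = \soc(V_1) \times \cdots \times \soc(V_k),
\]
reducing the theorem to computing the socle of each indecomposable factor $V_i$ (and of any trivially-acting simple factor of $G$, which just contributes a ``trivial'' socle whose $S^p$ component is the entire simple factor and whose other data are empty).

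Next, I would use Lemma~\ref{lemma:socleiso} to pass from each $V_i$ to a fixed geometric-equivalence representative: on representations where no simple normal subgroup acts trivially, geometric equivalence preserves the socle, so it suffices to check that one representative of each entry in the Kac--Brion--Benson--Ratcliff--Leahy list has socle equal to the corresponding entry of Table~\ref{table:socles}. This is where the main obstacle lies: the actual case-by-case computation of $\soc(V_i)$ for each entry of the classification. For a given $V_i$, Proposition~\ref{prop:colors_E} guarantees that $V_i$ is quasi-vectorial, so $\col_{V_i} = \D^1_{V_i}$ (in the notation of Definition~\ref{def:module}) and $\V_{V_i} = \emptyset$ since any $B$-stable prime divisor of the affine space $V_i$ has a global equation, hence is a color rather than a $G$-stable divisor. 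Consequently the only nontrivial data to compute are $S^p(V_i)$, $\Sigma^{sc}(V_i)$, $\A(V_i)$, and the restricted map $\rho'_{V_i}$.

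For each of these, there is a standard algorithm: $S^p(V_i)$ comes from the stabilizer of the open $B$-orbit, $\Sigma^{sc}(V_i) = \Sigma^N(V_i)$ (by Remark~\ref{rem:sigma_N_sc_E}) is determined by analyzing the multiplicative structure of $\CC[V_i]$ via Lemma~\ref{lemma:Sigma_N_D_1}, and the values of $\rho_{V_i}(D)$ on $\Sigma^{sc}(V_i)$ are then read off from the vanishing orders of the $B$-eigenfunctions $f_i$ of Definition~\ref{def:module} along each $B$-stable prime divisor. The element $\A(V_i) \subseteq \D^1_{V_i}$ consists of those colors moved by simple roots in $\Sigma^{sc}(V_i) \cap S$, and is straightforward to identify from the decomposition $V_i = \oplus E_j$.

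The main obstacle is simply the length and uniformity of this verification: one must run through every entry of the classification of indecomposable spherical modules, compute each invariant, and confirm that the resulting tuple matches one of the rows of Table~\ref{table:socles}. No conceptual difficulty arises in any individual case, and the correspondence follows by direct inspection. I would not attempt to reproduce the list here but instead rely on the tabulation already present in the literature (\cite{gagliardi-camus} explicitly lists the same data in the Luna-diagram language), yielding the theorem.
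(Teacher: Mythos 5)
Your overall skeleton is the same as the paper's: split off simple factors acting trivially, decompose into indecomposable pieces using the product property of socles, use Lemma~\ref{lemma:socleiso} to reduce to one representative per geometric equivalence class from the Kac--Brion--Benson--Ratcliff--Leahy/Knop classification, and then tabulate. However, your description of how the socle of each indecomposable piece is computed contains a genuine error: you claim that $\V_{V_i}=\emptyset$ because ``any $B$-stable prime divisor of the affine space $V_i$ has a global equation, hence is a color rather than a $G$-stable divisor.'' This inference is false. In a factorial affine variety \emph{every} prime divisor has a global equation, including the $G$-stable ones; having a global $B$-eigenequation does not prevent the divisor from being $G$-stable, and being quasi-vectorial (Proposition~\ref{prop:colors_E}) only says that all \emph{colors} contain the closed orbit --- it says nothing about the absence of $G$-stable prime divisors. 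Indeed Table~\ref{table:socles} itself records $\V_V\neq\emptyset$ in many rows: a one-dimensional module of a torus has the $G$-stable divisor $\{0\}$, and for instance $\SL(n+1)$ acting on $\CC^{n+1}\oplus(\CC^{n+1})^*$ has the $G$-stable hypersurface cut out by the natural pairing (the row of type $\sA_n$, $n\geq 3$, with $\V_V=\{-\gamma^*\}$). Since the component $\V_V$ and the values of $\rho'$ on it are part of the socle data that the theorem asserts agree with Table~\ref{table:socles}, a verification run under your assumption would produce wrong socles in a large number of cases. Relatedly, the identification $\col_{V_i}=\D^1_{V_i}$ is not justified: $\D^1_{E}$ can contain $G$-stable hyperplanes (one-dimensional summands), and colors of a spherical module need not be hyperplanes, so this set cannot be taken as the set of colors without argument.

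A smaller omission: Knop's Theorem~5.1, which is what the paper actually invokes, classifies \emph{saturated} indecomposable multiplicity free modules up to geometric equivalence. The paper therefore first enlarges the group to $G_0=G\times(\CC^*)^m$, with each $\CC^*$-factor scaling one irreducible summand, and uses that this does not change the socle (Lemma~\ref{lemma:socleindependent}) before appealing to the classification. Your appeal to ``the classification of spherical modules'' skips this saturation step; it is repairable by exactly this device, but as written the reduction to the classified list is not complete.
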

\begin{proof}
If $G$ has some simple normal subgroup $G'$ (with set of simple roots $S'$) acting trivially on $V$, then $\soc(V)$ is the product of the ``trivial'' socle $(S',\emptyset, \emptyset, \emptyset, \emptyset,\emptyset)$ and the socle of $V$ under the action of $G/G'$, hence we may assume that no such $G'$ exists.

Suppose that $G=G_1\times \ldots\times G_n$ and $V=V_1\oplus\ldots\oplus V_n$ such that for all $i\in\{1,\ldots,n\}$ the factor $G_i$ acts non-trivially on the $i$-th summand $V_i$ and trivially on the other summands. Then the socle of $V$ under the action of $G$ is the product of the socles of the modules $V_1,\ldots,V_n$ under the action of resp.\ $G_1,\ldots,G_n$.

Thanks to Lemma~\ref{lemma:socleiso}, we may prove the theorem assuming that there is no such decomposition, not even up to geometric equivalence. We recall that this is the definition of an \textbf{indecomposable} module in the sense of \cite[Section~5]{knop-rmks}.

The module $V$ might be a reducible $G$-module, so we denote by $V=W_1\oplus\ldots\oplus W_m$ the decomposition into irreducible summands, unique by sphericity of $V$.

Extend the $G$-action on $V$ to the group $G_0=G\times (\CC^*)^m$ by letting the $i$-th $\CC^*$-factor act on $W_i$ by multiplication. The socle of $V$ with respect to the action of $G$ is equal to the socle with respect to the action of $G_0$. Denote by $\psi\colon G_0\to\GL(V)$ this representation: then the center of $\psi(G_0)$ has dimension equal to $m$. We recall that this is the definition of a \textbf{saturated} module in the sense of \cite[Section~5]{knop-rmks}.

By \cite[Theorem~5.1]{knop-rmks}, the module $V$ up to geometric equivalence appears in the list of modules in \cite[Section~5]{knop-rmks}. Their socles are given in Table~\ref{table:socles}. We remark that for the modules $(\CC^m\otimes \CC^n)\oplus\CC^n$ and $(\CC^m\otimes \CC^n)\oplus(\CC^n)^*$ under the action of $\GL(m)\times \GL(n)$ we have assumed $m\geq 1, n\geq 2$, which is a larger
set of indices than that given in \cite[Section~5]{knop-rmks}. Knop communicated the revised range of indices for these modules to us.
\end{proof}

We explain the notations of Table~\ref{table:socles}.

For each module $V$ under the action of the group $G$ we give the semisimple type of $G$ and the sets $S^p(V)$ and $\Sigma^{sc}(V)$. Here the simple roots are denoted by $\alpha_1,\alpha_2,\ldots$ and numbered as in \cite{bourbaki-geadl47}; we use the notation $\alpha_1'$, $\alpha_1''$, $\ldots$ if the Dynkin diagram of $G$ has more than one connected component.

Then, whenever $\A(V)\neq\emptyset$, instead of specifying the whole set $\A(V)$ we give the values of $\rho'(D)$ on all spherical roots for only one color $D\in \A(V)$. The whole set $\A(V)$ is the unique one containing $D$ and such that $(S^p(V), \Sigma^{sc}(V),\A(V))$ is a spherical system. The set $\col_V$ is equal to the whole set of colors $\col(V)$ of the spherical system $(S^p(V),\Sigma^{sc}(V),\A(V))$, and is described in Definition~\ref{def:colors_sph_system}.

Finally, we describe the elements of $\V_V$ and their values on the spherical roots as follows. In each case, either $\V_V$ is empty, or it contains exactly one element $\nu$, or it contains exactly two elements. If $\V_V=\{\nu\}$, then either $\Sigma^{sc}(V)$ is empty, or there exists a spherical root $\gamma\in\Sigma^{sc}(V)$ such that $\rho'(\nu)(\gamma)=-1$, and $\rho'(\nu)(\sigma)=0$ for all $\sigma\in\Sigma^{sc}(V)\smallsetminus\{\gamma\}$. If $\Sigma^{sc}(V)$ is empty then $\rho'(\nu)$ is the empty map, and we report $\V_V$ only as $\{\nu\}$. Otherwise we denote $\nu$ by $-\gamma^*$ and we report $\V_V$ as $\{-\gamma^*\}$. If $\Sigma^{sc}(V)$ contains more than one element we indicate $\gamma\in\Sigma^{sc}(V)$ explicitly, otherwise $\gamma$ is obviously the unique element of $\Sigma^{sc}(V)$.

If $\V_V$ contains two elements $\nu_1,\nu_2$, then $\Sigma^{sc}(V)$ contains two elements $\gamma_1,\gamma_2$ such that for all $i\in\{1,2\}$ we have $\langle\rho'(\nu_i),\gamma_i\rangle=-1$ and $\langle\rho'(\nu_i),\sigma\rangle=0$ for all spherical root $\sigma$ different from $\gamma_i$. In this case we report $\V_V$ as $\{-\gamma_1^*,-\gamma_2^*\}$, and indicate $\gamma_1,\gamma_2\in\Sigma^{sc}(V)$ explicitly.

\begin{longtable}{|c|c|c|c|c|}
\caption{Socles of spherical modules.}\label{table:socles}\\
\hline
$G$ & $S^{p}(V)$ & $\Sigma^{sc}(V)$ & $\A(V)$ & $\V_V$  \\
\hline
Any%
\footnote{This is the socle of the module $\{0\}$.} & $S$ & $\emptyset$ & $\emptyset$ & $\emptyset$ \\
\hline
Torus%
\footnote{This is the socle of a one-dimensional module.} & $\emptyset$ & $\emptyset$ & $\emptyset$ & $\nu$ \\
\hline
\end{longtable}

\begin{longtable}{|c|c|c|c|c|}
\hline
$G$ & $S^{p}(V)$ & $\Sigma^{sc}(V)$ & $\A(V)$ & $\V_V$  \\
\hline
$\sA_n$, $n\geq 2$ & $\emptyset$ & $2\alpha_1,\ldots,2\alpha_{n-1},\gamma=2\alpha_n$ & $\emptyset$ & $-\gamma^*$  \\
\hline
\begin{tabular}{c} $\sA_n\times \sA_n$ \\ $n\geq 1$ \end{tabular} & $\emptyset$ & \begin{tabular}{c} $\alpha_1+\alpha_1',\ldots,\alpha_{n-1}+\alpha_{n-1}'$ \\ $\gamma=\alpha_n+\alpha_n'$ \end{tabular} & $\emptyset$ & $-\gamma^*$  \\
\hline
$\sA_n$, $n\geq 3$ & $\emptyset$ & \begin{tabular}{c} $\alpha_1+\alpha_2$, $\alpha_2+\alpha_3$,\\ $\ldots$,$\gamma=\alpha_{n-1}+\alpha_n$ \end{tabular} & $\emptyset$ & $-\gamma^*$  \\
\hline
$\sA_n$, $n\geq 5$ odd & $\emptyset$ & \begin{tabular}{c} $\alpha_1+\alpha_2$, $\alpha_2+\alpha_3$, $\ldots$,\\ $\gamma=\alpha_{n-2}+\alpha_{n-1}$,\\ $\alpha_{n-1}+\alpha_n$ \end{tabular} & $\emptyset$ & $-\gamma^*$  \\
\hline
\end{longtable}

\begin{longtable}{|c|c|c|c|c|}
\hline
$G$ & $S^{p}(V)$ & $\Sigma^{sc}(V)$ & $\A(V)$ & $\V_V$  \\
\hline
$\sA_3\times\sC_2$ & $\emptyset$ & $S, \gamma=\alpha_3$ & $1,0,-1,1,-1$ & $-\gamma^*$  \\
\hline
$\sA_1$ & $\emptyset$ & $\alpha_1$ & $1$ & $-\gamma^*$  \\
\hline
$\sA_n$, $n\geq 4$ even & $\emptyset$ & \begin{tabular}{c} $\alpha_1+\alpha_2$, $\alpha_2+\alpha_3$, $\ldots$,\\ $\alpha_{n-2}+\alpha_{n-1}$, $\alpha_n$ \end{tabular} & $0,\ldots,0,1$ & $\emptyset$  \\
\hline
\begin{tabular}{c} $\sA_n\times\sA_{n+1}$ \\ $n\geq1$ \end{tabular} & $\emptyset$ & \begin{tabular}{c} $\alpha_1,\ldots,\alpha_n,$ \\ $\alpha'_1,\ldots,\gamma=\alpha'_{n+1}$ \end{tabular} & \begin{tabular}{c} $1,-1,0,\ldots,0,$ \\ $-1,1,0,\ldots,0$ \end{tabular} & $-\gamma^*$  \\
\hline
\begin{tabular}{c} $\sA_n\times\sA_n$ \\ $n\geq1$ \end{tabular} & $\emptyset$ & \begin{tabular}{c} $\alpha_1,\ldots,\gamma=\alpha_n,$ \\ $\alpha'_1,\ldots,\alpha'_n$ \end{tabular} & \begin{tabular}{c} $1,-1,0,\ldots,0,$ \\ $-1,1,0,\ldots,0$ \end{tabular} & $-\gamma^*$  \\
\hline
\begin{tabular}{c} $\sA_n\times\sA_n$ \\ $n\geq1$ \end{tabular} & $\emptyset$ & \begin{tabular}{c} $\alpha_1,\ldots,\alpha_n,$ \\ $\alpha'_1,\ldots,\gamma=\alpha'_n$ \\ \end{tabular} & \begin{tabular}{c} $1,0,\ldots,0,$ \\ $1,-1,0,\ldots,0$ \end{tabular} & $-\gamma^*$  \\
\hline
$\sA_1\times\sA_1\times\sA_1$ & $\emptyset$ & $\gamma_1=\alpha_1,\alpha'_1,\gamma_2=\alpha''_1$ & $1,1,-1$ & \begin{tabular}{c} $-\gamma_1^*$, \\ $-\gamma_2^*$ \end{tabular}  \\
\hline
\end{longtable}

\begin{longtable}{|c|c|c|c|c|}
\hline
\multicolumn{5}{|c|}{Notation: $\alpha_{a,b}=\alpha_a+\alpha_{a+1}+\ldots+\alpha_{b-1}+\alpha_b$}  \\
\hline
$G$ & $S^{p}(V)$ & $\Sigma^{sc}(V)$ & $\A(V)$ & $\V_V$  \\
\hline
$\sA_n$, $n\geq 1$ & $\alpha_2,\ldots,\alpha_n$ & $\emptyset$ & $\emptyset$ & $\emptyset$ \\
\hline
$\sC_n$, $n\geq 2$ & $\alpha_2,\ldots,\alpha_n$ & $\emptyset$ & $\emptyset$ & $\emptyset$  \\
\hline
$\sB_n$, $n\geq 2$ & $\alpha_2,\ldots,\alpha_n$ & $2\alpha_{1,n}$ & $\emptyset$ & $-\gamma^*$  \\
\hline
$\sD_n$, $n\geq 3$ & $\alpha_2,\ldots,\alpha_n$ &
$2\alpha_{1,n-2}+\alpha_{n-1}+\alpha_n$
& $\emptyset$ & $-\gamma^*$  \\
\hline
\begin{tabular}{c} $\sA_n$ \\ $n\geq 4$ even \end{tabular} & \begin{tabular}{c}$\alpha_1,\alpha_3,\ldots,$\\ $\alpha_{n-3},\alpha_{n-1}$ \end{tabular} &  \begin{tabular}{c} $\alpha_1+2\alpha_2+\alpha_3$, \\ $\alpha_3+2\alpha_4+\alpha_5$,\\ $\ldots$,\\ $\alpha_{n-3}+2\alpha_{n-2}+\alpha_{n-1}$\end{tabular} & $\emptyset$ & $\emptyset$  \\
\hline
\begin{tabular}{c} $\sA_n$ \\ $n\geq 3$ odd \end{tabular} & \begin{tabular}{c}$\alpha_1,\alpha_3,$ \\ $\alpha_{n-2}\ldots,\alpha_n$\end{tabular} &  \begin{tabular}{c} $\alpha_1+2\alpha_2+\alpha_3$, \\ $\alpha_3+2\alpha_4+\alpha_5$,\\ $\ldots$,\\ $\gamma=\alpha_{n-2}+2\alpha_{n-1}+\alpha_{n}$\end{tabular} & $\emptyset$ & $-\gamma^*$  \\
\hline
\begin{tabular}{c} $\sA_n\times \sA_m$ \\ $m>n\geq 1$ \end{tabular} & $\alpha'_{n+1},\ldots,\alpha'_m$ & $\alpha_1+\alpha_1',\ldots,\alpha_n+\alpha_n'$  & $\emptyset$ & $\emptyset$  \\
\hline
\begin{tabular}{c} $\sA_1\times \sC_m$ \\$m\geq 2$ \end{tabular} & $\alpha'_3,\ldots,\alpha'_m$ & \begin{tabular}{c} $\alpha_1+\alpha_1'$,\\ $\gamma=\alpha_1'+2\alpha'_{2,m-1}+\alpha'_m$
\end{tabular} & $\emptyset$ & $-\gamma^*$  \\
\hline
$\sB_3$ & $\alpha_1,\alpha_2$ & $\alpha_1+2\alpha_2+3\alpha_3$ & $\emptyset$ & $-\gamma^*$  \\
\hline
$\sB_4$ & $\alpha_2,\alpha_3$ & \begin{tabular}{c} $\gamma=\alpha_1+\alpha_2+\alpha_3+\alpha_4$, \\ $\alpha_2+2\alpha_3+3\alpha_4$ \end{tabular} & $\emptyset$ & $-\gamma^*$  \\
\hline
$\sD_5$ & $\alpha_2,\alpha_3,\alpha_4$ & $\alpha_2+2\alpha_3+\alpha_4+2\alpha_5$ & $\emptyset$ & $\emptyset$  \\
\hline
$\sG_2$ & $\alpha_2$ & $4\alpha_1+2\alpha_2$ & $\emptyset$ & $-\gamma^*$  \\
\hline
$\sE_6$ & $\alpha_2,\alpha_3,\alpha_4,\alpha_5$ & \begin{tabular}{c}
$2\alpha_1+\alpha_2+2\alpha_3+2\alpha_4+\alpha_5$,\\$\gamma=\alpha_2+\alpha_3+2\alpha_{4,6}$
\end{tabular} & $\emptyset$ & $-\gamma^*$  \\
\hline
$\sD_4$ & $\alpha_2$ & \begin{tabular}{c} $\gamma_1=\alpha_1+\alpha_2+\alpha_3$, \\ $\gamma_2=\alpha_1+\alpha_2+\alpha_4$,\\ $\alpha_2+\alpha_3+\alpha_4$ \end{tabular} & $\emptyset$ & \begin{tabular}{c} $-\gamma_1^*$, \\ $-\gamma_2^*$ \end{tabular}  \\
\hline
$\sA_n$, $n\geq 2$ & $\alpha_2,\ldots,\alpha_{n-1}$ & $\alpha_{1,n}$ & $\emptyset$ & $-\gamma^*$  \\
\hline
\end{longtable}

\begin{longtable}{|c|c|c|c|c|}
\hline
\multicolumn{5}{|c|}{Notation: $\alpha_{a,b}=\alpha_a+\alpha_{a+1}+\ldots+\alpha_{b-1}+\alpha_b$}  \\
\hline
$G$ & $S^{p}(V)$ & $\Sigma^{sc}(V)$ & $\A(V)$ & $\V_V$  \\
\hline
\begin{tabular}{c} $\sA_2\times\sC_m$ \\ $m\geq3$ \end{tabular} & $\alpha_4',\ldots,\alpha_m'$ & \begin{tabular}{c} $\alpha_1,\alpha_2,\alpha_1',\alpha_2'$, \\ $\alpha'_2+2\alpha'_{3,m-1}+\alpha'_m$ \end{tabular}  & \begin{tabular}{c} $1,0,1,-1,$ \\ $0$ \end{tabular} & $\emptyset$  \\
\hline
\begin{tabular}{c} $\sA_n\times\sC_2$ \\ $n\geq 4$ \end{tabular} & $\alpha_5,\ldots,\alpha_n$ & \begin{tabular}{c} $\alpha_1,\alpha_2,\alpha_3,$ \\ $\alpha'_1,\alpha'_2$ \end{tabular} & \begin{tabular}{c} $1,0,-1,$ \\ $1,-1$ \end{tabular} & $\emptyset$  \\
\hline
$\sA_n$, $n\geq 2$ & $\alpha_3,\ldots,\alpha_n$ & $\alpha_1$ & $1$ & $\emptyset$  \\
\hline
\begin{tabular}{c} $\sA_n\times\sA_m$ \\ $m-2\geq n\geq1$ \end{tabular} & \begin{tabular}{c} $\alpha'_{n+3},\ldots,$ \\ $\alpha'_m$ \end{tabular} & \begin{tabular}{c} $\alpha_1,\ldots,\alpha_n,$ \\ $\alpha'_1,\ldots,\alpha'_{n+1}$ \end{tabular} & \begin{tabular}{c} $1,-1,0,\ldots,0,$ \\ $-1,1,0,\ldots,0$ \end{tabular} & $\emptyset$  \\
\hline
\begin{tabular}{c} $\sA_n\times\sA_m$ \\ $n>m\geq1$ \end{tabular} & \begin{tabular}{c} $\alpha_{m+2},\ldots,$ \\ $\alpha_n$ \end{tabular} & \begin{tabular}{c} $\alpha_1,\ldots,\alpha_m,$ \\ $\alpha'_1,\ldots,\alpha'_m$ \end{tabular} & \begin{tabular}{c} $1,-1,0,\ldots,0,$ \\ $-1,1,0,\ldots,0$ \end{tabular}   & $\emptyset$  \\
\hline
\begin{tabular}{c} $\sA_n\times\sA_m$ \\ $n>m\geq1$ \end{tabular} & \begin{tabular}{c} $\alpha_{m+2},\ldots,$ \\ $\alpha_{n-1}$ \end{tabular} & \begin{tabular}{c} $\alpha_1,\ldots,\alpha_m,$ \\ $\alpha'_1,\ldots,\alpha'_m$, \\ $\alpha_{m+1,n}$ \end{tabular} & \begin{tabular}{c} $1,0,\ldots,0,$ \\ $1,-1,0,\ldots,0$, \\ $0$ \end{tabular} & $\emptyset$  \\
\hline
\begin{tabular}{c} $\sA_n\times\sA_m$ \\ $m>n\geq1$ \end{tabular} & \begin{tabular}{c} $\alpha'_{n+2},\ldots,$ \\ $\alpha'_m$ \end{tabular} & \begin{tabular}{c} $\alpha_1,\ldots,\alpha_n,$ \\ $\alpha'_1,\ldots,\alpha'_n$ \end{tabular} & \begin{tabular}{c} $1,-1,0,\ldots,0,$ \\ $-1,1,0,\ldots,0$ \end{tabular} & $\emptyset$  \\
\hline
\begin{tabular}{c} $\sA_n\times\sA_1\times\sA_1$ \\ $n\geq2$ \end{tabular} & $\alpha_3,\ldots,\alpha_n$ & $\alpha_1,\alpha'_1,\gamma=\alpha''_1$ & $1,1,-1$ & $-\gamma^*$  \\
\hline
\begin{tabular}{c} $\sA_n\times\sA_1\times\sA_m$ \\ $n,m\geq2$ \end{tabular} & \begin{tabular}{c} $\alpha_3,\ldots,\alpha_n,$ \\ $\alpha''_3,\ldots,\alpha''_m$ \end{tabular} & $\alpha_1,\alpha'_1,\alpha''_1$ & $1,1,-1$ & $\emptyset$  \\
\hline
$\sC_n$, $n\geq 2$ & $\alpha_3,\ldots,\alpha_n$ &  \begin{tabular}{c} $\alpha_1,$ \\ $\gamma=\alpha_1+2\alpha_{2,n-1}+\alpha_n$ \end{tabular}  & $1,0$ & $-\gamma^*$  \\
\hline
\begin{tabular}{c} $\sC_n\times\sA_1$ \\ $n\geq 2$ \end{tabular} & $\alpha_3,\ldots,\alpha_n$ & \begin{tabular}{c} $\alpha_1, \alpha'_1,$ \\ $\gamma=\alpha_1+2\alpha_{2,n-1}+\alpha_n$ \end{tabular} & \begin{tabular}{c} $1,1,$ \\ $0$ \end{tabular} & $-\gamma^*$  \\
\hline
\begin{tabular}{c} $\sC_n\times\sA_1\times\sA_1$ \\ $n\geq 2$ \end{tabular} & $\alpha_3,\ldots,\alpha_n$ & \begin{tabular}{c} $\alpha_1, \alpha'_1,\gamma_1=\alpha''_1,$ \\ $\gamma_2=\alpha_1+2\alpha_{2,n-1}+\alpha_n$ \end{tabular} & \begin{tabular}{c} $1,1,-1,$ \\ $0$ \end{tabular} & \begin{tabular}{c} $-\gamma_1^*$, \\ $-\gamma_2^*$ \end{tabular}  \\
\hline
\begin{tabular}{c} $\sC_n\times\sA_1\times\sA_m$ \\ $n,m\geq 2$ \end{tabular} & \begin{tabular}{c} $\alpha_3,\ldots,\alpha_n,$ \\ $\alpha''_3,\ldots,\alpha''_m$ \end{tabular} & \begin{tabular}{c} $\alpha_1, \alpha'_1,\alpha''_1,$ \\ $\gamma=\alpha_1+2\alpha_{2,n-1}+\alpha_n$ \end{tabular}  & \begin{tabular}{c} $1,1,-1,$ \\ $0$ \end{tabular} & $-\gamma^*$  \\
\hline
\begin{tabular}{c} $\sC_n\times\sA_1\times\sC_m$ \\ $n,m\geq 2$ \end{tabular} & \begin{tabular}{c} $\alpha_3,\ldots,\alpha_n,$ \\ $\alpha''_3,\ldots,\alpha''_m$ \end{tabular} & \begin{tabular}{c} $\alpha_1, \alpha'_1,\alpha''_1,$ \\ $\gamma_1=\alpha_1+2\alpha_{2,n-1}+\alpha_n,$ \\ $\gamma_2=\alpha''_1+2\alpha''_{2,n-1}+\alpha''_n$ \end{tabular} & \begin{tabular}{c} $1,1,-1,$ \\ $0,$ \\ $0$ \end{tabular} & \begin{tabular}{c} $-\gamma_1^*$, \\ $-\gamma_2^*$ \end{tabular}  \\
\hline
\end{longtable}

\section{Combinatorial characterization of smooth weight monoids} \label{sec:smoothwm}

In this section we state and prove the main result of this paper. In order to do so we introduce a few more notions.

\begin{definition} \label{def:thminvariants}
Let $\wm$ be a normal submonoid of $\dw$, and let $\Sigma$ be a subset of $\Sigma^{sc}(G)$ that is adapted to $\wm$. We define the following:
\begin{enumerate}[1.]
\item $\s(\wm, \Sigma) = (S^p(\wm), \Sigma, \A(\wm, \Sigma))$ is the spherical system constructed in the proof of Proposition~\ref{prop:luna-adapt}, $\col(\Sigma, \wm)$ is the set of colors of this spherical system, and $(\Z\wm,c)$ is the augmentation constructed in the same proposition.
\item $\V(\wm,\Sigma):=\{v\in\Hom_{\Z}(\Z\Gamma,\Q): \langle
  v,\sigma\rangle\leq0\mbox{ for all }\sigma\in\Sigma\}$.
\item $\mathcal{C}(\wm,\Sigma)$ is the maximal face of $\wm^\vee$ whose relative interior meets $\V(\wm,\Sigma)$.
\item $\mathcal{F}(\wm,\Sigma):=\{D \in \col(\wm,\Sigma) \colon c(D,\cdot) \in \mathcal{C}(\wm,\Sigma)\}$ 
\item $\mathcal{B}(\wm,\Sigma)$ is the set the primitive elements in $(\Z\wm)^*$ that lie on extremal rays of  $\mathcal{C}(\wm,\Sigma)$ which do not contain any element of $\{c(D,\cdot): D \in \mathcal{F}(\wm,\Sigma)\}$.
\item $\D(\wm,\Sigma):=\mathcal{F}(\wm,\Sigma) \cup \mathcal{B}(\wm,\Sigma)$.
\item $S(\wm, \Sigma):=\{\alpha \in \sr \colon \text{ $\alpha$ does not move any color in $\col(\Sigma,\wm)\setminus \mathcal{F}(\wm,\Sigma)$}\}$.
\item $\rho \colon \D(\wm,\Sigma) \to \Z\wm^*$ is defined by $\rho(D) = c(D,\cdot)$ if $D \in\mathcal{F}(\wm,\Sigma)$ and $\rho(D) = D$ for $D \in \mathcal{B}(\wm,\Sigma)$.
\item $\soc(\wm,\Sigma):=(S^p(\wm),\Sigma, \A(\wm, \Sigma), \mathcal{F}(\wm,\Sigma), \mathcal{B}(\wm,\Sigma), \rho'\colon \D(\wm,\Sigma) \to \<\Sigma\>^*)$, where $\rho'(D) = \rho(D)|_{\<\Sigma\>}$. 
\item $\overline{\soc}(\wm,\Sigma)$ is the localization of $\soc(\wm,\Sigma)$ at $S(\wm,\Sigma)$, as defined in Definition~\ref{def:socloc}. 
\end{enumerate}

\end{definition}

\begin{theorem} \label{thm:general}
Let $\wm$ be a normal monoid of dominant weights of $G$. Then $\wm$ is the weight monoid of a smooth affine spherical $G$-variety if and only if there exists a subset $\Sigma$ of $\Sigma^{sc}(\wm)$ such that
\begin{enumerate}
\item $\Sigma$ is adapted to $\wm$ and there is no subset of $\Sigma^{sc}(\wm)$ containing $\Sigma$ that is adapted to $\wm$; \label{general_1}
\item $\overline{\soc}(\wm,\Sigma)$ is the socle of a spherical module; and \label{general_2}
\item the $|\D(\wm,\Sigma)|$-tuple $(\rho(D))_{D \in \D(\wm,\Sigma)}$ can be completed to a basis of $\Z\wm^*$. \label{general_3}
\end{enumerate}
\end{theorem}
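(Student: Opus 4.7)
The plan is to prove the theorem by interpreting the combinatorial data in Definition~\ref{def:thminvariants} as the geometric invariants of a generic affine spherical variety with weight monoid $\wm$ and spherical roots $\Sigma$, and then applying Camus's smoothness criterion (Theorem~\ref{thm:camus_smoothnes_crit}). A key preliminary observation is that every affine spherical $G$-variety $X$ is simple with a unique closed orbit $Y$, and since every $G$-orbit closure in $X$ contains $Y$, we have $X_{Y,G} = X$. Hence $X$ is smooth if and only if it is smooth at $Y$, so Camus's criterion needs to be checked only at $Y$.

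The forward direction proceeds as follows. Suppose $\wm = \wm(X)$ for a smooth affine spherical $X$. By Proposition~\ref{prop:smoothgeneric}, $X$ is generic, so $\Sigma := \Sigma^{sc}(X)$ is a maximal subset of $\Sigma^{sc}(\wm)$ adapted to $\wm$ by Corollary~\ref{cor:genmaxadap}; this gives condition (\ref{general_1}). The reverse direction is symmetric: given $\Sigma$ satisfying (\ref{general_1})--(\ref{general_3}), Corollary~\ref{cor:genmaxadap} together with Proposition~\ref{prop:denseTorb_irrcomp} and Proposition~\ref{prop:orbincl} produces a generic affine spherical variety $X$ with $\wm(X) = \wm$ and $\Sigma^{sc}(X) = \Sigma$; it remains to show that (\ref{general_2}) and (\ref{general_3}) imply smoothness of $X$ at $Y$.

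Both directions rely on the same dictionary between the combinatorial data of Definition~\ref{def:thminvariants} and the geometric data of such a generic $X$. The uniqueness statement in Proposition~\ref{prop:adapt-spher-roots} and Remark~\ref{rem:colors_variety_system} identify the spherically closed spherical system $\s(X)$ with $\s(\wm,\Sigma)$ and the augmentation $(\lat(X), c_X)$ with $(\Z\wm, c)$; in particular $\col(X) = \col(\wm,\Sigma)$ and $c_X(D,\cdot) = c(D,\cdot)$. One then needs to establish that under this identification:
\begin{enumerate}[(i)]
\item the subset $\col_X$ of colors of $X$ containing $Y$ coincides with $\F(\wm,\Sigma)$, and the $G$-stable prime divisors $\V_X$ correspond bijectively to $\B(\wm,\Sigma)$ via the primitive-ray assignment, with $\rho_X$ agreeing with the map $\rho$ of Definition~\ref{def:thminvariants} on $\D_X = \D(\wm,\Sigma)$;
\item the set $S'$ of simple roots of the Levi $L$ from the local structure theorem at $Y$ (Theorem~\ref{thm:local}), applied as in Proposition~\ref{prop:socloc}, equals $S(\wm,\Sigma)$.
\end{enumerate}
For (i), one uses that the colored cone of the closed orbit of a simple spherical variety is precisely the maximal face of $\wm^\vee$ meeting the valuation cone, together with its colors; this is exactly how $\mathcal{C}(\wm,\Sigma)$ and $\F(\wm,\Sigma)$ are defined, and the $G$-invariant valuations in $\V(X)$ on the remaining extremal rays of $\mathcal{C}(\wm,\Sigma)$ are the primitive lattice vectors of $\B(\wm,\Sigma)$ by Knop's description of $\V(X)$. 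For (ii), $P$ in Theorem~\ref{thm:local} is the stabilizer of all colors not containing $Y$, i.e.\ the colors in $\col(\wm,\Sigma)\setminus \F(\wm,\Sigma)$, so the simple roots of $L$ are exactly those that move none of these colors, which is $S(\wm,\Sigma)$.

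With this dictionary in hand, $\soc(X) = \soc(\wm,\Sigma)$ and $\soc(X_{Y,G})_{S'} = \overline{\soc}(\wm,\Sigma)$, so Camus's two conditions (\ref{thm:camus_smoothness_crit:socle}) and (\ref{thm:camus_smoothness_crit:basis}) translate word-for-word into (\ref{general_2}) and (\ref{general_3}). The main technical hurdle is item (i) above: giving a clean combinatorial description of which colors of $X$ contain $Y$ and which primitive rays of $\mathcal{C}(\wm,\Sigma)$ correspond to $G$-stable prime divisors rather than colors, requires separating the color and divisor contributions to the colored cone of the closed orbit. This is where Luna's characterization of $G$-saturated weight monoids (Proposition~\ref{prop:lunaXGsat}) and condition~(\ref{adapt2}) of Proposition~\ref{prop:luna-adapt} play their role, ensuring that the rays of $\mathcal{C}(\wm,\Sigma)$ not carrying a color image $c(D,\cdot)$ are precisely those arising from valuations in $\V(X)$. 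Finally, Theorem~\ref{thm:main} then follows by specializing to $G$-saturated $\wm$, where $\V_X = \emptyset$ by Proposition~\ref{prop:lunaXGsat}, $\mathcal{C}(\wm,\Sigma)$ is the face whose interior contains $\V(\wm)$, and $S(\wm,\Sigma)$ reduces to the $S_\wm$ of Proposition~\ref{prop:localizroots}.
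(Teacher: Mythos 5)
Your proposal is correct and takes essentially the same route as the paper: both directions come down to Camus's criterion (Theorem~\ref{thm:camus_smoothnes_crit}) applied at the unique closed orbit of an affine spherical variety $X$ with $\wm(X)=\wm$ and $\Sigma^{sc}(X)=\Sigma$ (existence/maximality of $\Sigma$ handled via Proposition~\ref{prop:smoothgeneric}, Corollary~\ref{cor:genmaxadap} and the definition of adaptedness), after identifying the data of Definition~\ref{def:thminvariants} with $\mathcal{C}(X)$, $\mathcal{F}(X)$, $\mathcal{B}(X)$, $S'$, $\rho_X$ and the localized socle by means of the uniqueness statement in Proposition~\ref{prop:adapt-spher-roots} and Knop's embedding theory, exactly as in the paper. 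The only slight misattribution is your claim that Proposition~\ref{prop:lunaXGsat} and Proposition~\ref{prop:luna-adapt}(\ref{adapt2}) are what identify the rays of $\mathcal{C}(\wm,\Sigma)$ without colors with $G$-stable prime divisors: in the general (normal, not necessarily $G$-saturated) setting this identification is standard colored-cone theory from \cite{knop-lv}, and Proposition~\ref{prop:lunaXGsat} enters only when specializing to the $G$-saturated case of Theorem~\ref{thm:main}.
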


\begin{remark} \label{rem:thm:general}
\begin{enumerate}[(a)]
\item Recall that by Proposition~\ref{prop:adapsr}, determining the set $\Sigma^{sc}(\wm)$ is a finite, combinatorial problem. Similarly, Proposition~\ref{prop:luna-adapt}, which relies on the (proved) Luna Conjecture, reduces checking condition (\ref{general_1}) in Theorem~\ref{thm:general} to a finite, combinatorial problem.  \label{rem:thm:general:lc}
\item Verifying conditions (\ref{general_2}) and (\ref{general_3}) of Theorem~\ref{thm:general} is also a finite problem. Indeed, (\ref{general_2}) reduces to checking that $\overline{\soc}(\wm,\Sigma)$ is the sum of socles from Table~\ref{table:socles}, up to isomorphism (see Theorem~\ref{thm:soclesmodules}). By the Elementary Divisors Theorem, condition (\ref{general_3}) comes down to checking that the maximal minors of an integer matrix have greatest common divisor equal to $1$.
\item If $\Sigma^{sc}(\wm)$ is adapted to $\wm$, then $\Sigma = \Sigma^{sc}(\wm)$ is the only set that satisfies condition (\ref{general_1}) in Theorem~\ref{thm:general}. By Remark~\ref{rem:sigmascadapted} this is the case when $\wm$ is $G$-saturated or when $\Sigma^{sc}(\wm)$ does not contain any simple roots.  
\end{enumerate}
\end{remark}

\begin{proof}[Proof of Theorem~\ref{thm:general}]
We first show that the conditions on $\wm$ in the Theorem are necessary for $\wm$ to be smooth. Let $X$ be a smooth affine spherical $G$-variety $X$ such that $\wm(X)=\wm$. We put $\Sigma  = \Sigma^{sc}(X)$. Then condition (\ref{general_1}) holds by the definition of `adapted' and by Proposition~\ref{prop:smoothgeneric} and Corollary~\ref{cor:genmaxadap}. Conditions (\ref{general_2}) and (\ref{general_3}) follow from Theorem~\ref{thm:camus_smoothnes_crit} with $Y$ equal to the unique closed $G$-orbit of $X$ because 
\begin{enumerate}[(i)]
\item $S(\wm,\Sigma)$ is equal to $S'$ of Theorem~\ref{thm:camus_smoothnes_crit}; \label{item:proofgen1}
\item $\overline{\soc}(\wm,\Sigma)$ is equal to the localization of $\soc(X_{Y,G})$ at $S'$; and \label{item:proofgen2}
\item the $|\D_{X_{Y,G}}|$-tuple $(\rho_X(D))_{D\in\D_{X_{Y,G}}}$ is equal to the $|\D(\wm,\Sigma)|$-tuple $(\rho(D))_{D \in \D(\wm,\Sigma)}$. \label{item:proofgen3}
\end{enumerate}  
The three claims (\ref{item:proofgen1}), (\ref{item:proofgen2}) and (\ref{item:proofgen3}) are consequences of standard facts in the combinatorial theory of spherical varieties, which can be found in \cite{knop-lv}, \cite{luna-typeA} and \cite{timashev-embbook}, together with our analysis in Section~\ref{sec:combinatorics}. More specifically, the invariants in Definition~\ref{def:thminvariants} are the combinatorial descriptions of certain geometric invariants of $X$: 
\begin{enumerate}[1.]
\item $\s(\wm, \Sigma) = \s(X)$, by the uniqueness statement in Proposition~\ref{prop:adapt-spher-roots}: indeed $\s(\wm,\Sigma)$ satisfies properties (\ref{item:1})-(\ref{item:4}) of Proposition~\ref{prop:adapt-spher-roots} by construction (see the proof of Proposition~\ref{prop:luna-adapt}), and $\s(X)$ satisfies them by the first part of Proposition~\ref{prop:adapt-spher-roots}; 
\item $\V(\wm,\Sigma) = \V(X)$;
\item $\mathcal{C}(\wm,\Sigma)$ is equal to $\mathcal{C}(X)$ of \cite{knop-lv};
\item $\mathcal{F}(\wm,\Sigma)$ is equal to $\mathcal{F}(X)$ of \cite{knop-lv}; 
\item $\mathcal{B}(\wm,\Sigma)$ is identified with $\mathcal{B}(X)$ of \cite{knop-lv}; 
\item $\D(\wm,\Sigma)$ is identified with $\D_X$;
\item $S(\wm, \Sigma)$ is the set of simple roots that do not move any of the colors of $X$ that do not contain the closed orbit;
\item $\rho \colon \D(\wm,\Sigma) \to \Z\wm^*$ is identified with $\rho_X\colon\D_X \to \lat(X)^*$;
\item $\soc(\wm,\Sigma)$ is identified with $\soc(X)$;
\item $\overline{\soc}(\wm,\Sigma)$ is identified with the localization of $\soc(X)$ at $S(\wm, \Sigma)$.
\end{enumerate}  

We now prove the sufficiency of the conditions on $\wm$. Condition~\ref{general_1} says, by Definition~\ref{def:adapted}, that there exists an affine spherical $G$-variety $X$ with $\wm(X)=\wm$ and $\Sigma^{sc}(X) = \Sigma$. Using, once again, that $\soc(X)$ and $S'$ are described combinatorially as in Definition~\ref{def:thminvariants}, the Camus smoothness criterion (i.e.\ Theorem~\ref{thm:camus_smoothnes_crit}) and conditions (\ref{general_2}) and (\ref{general_3}) imply that $X$ is smooth.    
\end{proof}

We can prove now Proposition~\ref{prop:localizroots} and Theorem~\ref{thm:main} from Section~\ref{sec:intro}.

\begin{proof}[Proof of Proposition~\ref{prop:localizroots}]
Consider the convex cone $C$ in $\Hom_Z(\wm,\Q)$ generated by $\{\alpha^\vee|_{\Z\wm}: \alpha\in S\}$. It intersects $\V(\wm)$ at least in $0$, so there are faces of $C$ whose relative interior intersect $\V(\wm)$ (at the very least, the linear part of $C$). By convexity of $\V(\wm)$, for any two of such faces $C_1,C_2$ of $C$ there is a point in the relative interior of their convex envelope lying in $\V(\wm)$, so $C$ has a face $C_3$ containing $C_1$ and $C_2$ and such that the relative interior of $C_3$ intersects $\V(\wm)$. It follows that $C$ has a maximal such face $C_m$. The set $S_\wm$ with the required properties is the maximal subset of $S$ such that $C_m$ is the convex cone generated by $\{\alpha^\vee|_{\Z\wm}: \alpha\in S_\wm\}$.
\end{proof}

\begin{proof}[Proof of Theorem~\ref{thm:main}]  \label{proof_thm_main}
Let $\wm$ be $G$-saturated. Thanks to Corollary~\ref{cor:allsimdoub_msirr}, the set $\Sigma^{sc}(\wm)$ is itself its unique maximal subset adapted to $\wm$, and for brevity let us set from now on $\Sigma := \Sigma^{sc}(\wm)$.

Recall that $\soc(\wm,\Sigma)$ is the socle of an affine spherical variety with weight monoid $\Gamma$. By Proposition~\ref{prop:lunaXGsat} this variety has no $G$-stable prime divisors, i.e.\ $\mathcal{B}(\wm,\Sigma)=\emptyset$, and $|a(\alpha)|=1$ for all $\alpha\in S\cap \Sigma$. This second fact also implies that for any color $D$ the element $\rho(D)$ is a multiple of the restriction of some simple coroot of $G$.

We begin by making the following claim: 
\begin{equation} \label{proof:saturated:socle}
\begin{split}
\overline{\soc}(\wm,\Sigma) \text{ is the socle of a spherical module} \\ \Longleftrightarrow (S_{\wm},S^p(\wm), \Sigma^N(\wm) \cap \Z S_{\wm}) \text{ is admissible.}
\end{split}
\end{equation}
Indeed, since $\wm$ is $G$-saturated, $\soc(\wm,\Sigma)$ is uniquely determined by its first two components. Together with $S_{\wm}$ these two components of the localization $\overline{\soc}(\wm,\Sigma)$ of $\soc(\wm,\Sigma)$ build up the triple in the theorem, modulo the difference between $\Sigma^N(\wm)$ and $\Sigma^{sc}(\wm)$ which is  handled by Proposition~\ref{prop:Nspherroots_Xwm}.

Since it is true for $\soc(\wm,\Sigma)$, also for $\overline{\soc}(\wm,\Sigma)$ the component $\mathcal B$ is empty and all its colors are multiples of coroots. Then it is enough to observe that the list of primitive admissible triples is obtained from Table~\ref{table:socles} by only keeping the socles that satisfy these two conditions. This proves claim (\ref{proof:saturated:socle}).

To prove Theorem~\ref{thm:main} we can assume that one (whence also the other) of the equivalent statements  in claim (\ref{proof:saturated:socle}) holds. 
An immediate consequence of this assumption is that
\begin{equation} \label{proof:saturated:nosimpordouble}
\text{for all }\alpha\in S_\wm\setminus S^p(\wm) \text{ neither }\alpha \text{ nor }2\alpha \text{ is in }\Sigma,
\end{equation}
because this holds for all the six socles in Table~\ref{table:socles} that correspond to primitive admissible triples as above.

At this point the theorem follows from Theorem~\ref{thm:general} if we prove that
\begin{enumerate}
\item \label{proof:saturated:basis} the set $\rho(\D(\wm,\Sigma))$ is equal to $\{\alpha^{\vee}|_{\Z\wm}\colon \alpha \in S_{\wm}\setminus S^p(\wm)\}$, and
\item \label{proof:saturated:injective} $\rho$ is injective on $\D(\wm,\Sigma)$ if and only if condition (b) in Theorem~\ref{thm:main} is met. 
\end{enumerate}
We start with claim (\ref{proof:saturated:basis}). For all $\alpha\in S\cap \Sigma$ we have $|a(\alpha)|=1$, so a color of the spherical system $\s(\wm, \Sigma)$ is moved by more than one simple root if and only if it's exactly two orthogonal simple roots $\alpha,\beta$, which then add up to an element of $\Sigma$ and whose coroots are equal on $\Z\wm$. It follows that assigning to a simple coroot $\alpha^\vee$ the element $\rho(D)$ for $D$ equal to a color moved by $\alpha$ induces a well-defined bijection between $\{\alpha^{\vee}|_{\Z\wm}\colon \alpha \in S\}$ and $\rho(\Delta(\wm,\Sigma))$. This bijection is the identity, except for the cases where $\alpha$ or $2\alpha$ is in $\Sigma$.

We deduce that $S_\wm=S(\wm, \Sigma)$, and that $\{\alpha^{\vee}|_{\Z\wm}\colon \alpha \in S_{\wm}\setminus S^p(\wm)\}$ is equal to $\rho(\D(\wm,\Sigma))$ thanks to \eqref{proof:saturated:nosimpordouble}. This proves claim (\ref{proof:saturated:basis}).

Before we turn to the proof of claim (\ref{proof:saturated:injective}), we observe that, if $\alpha$ and $\beta$ are two different elements of $S_{\wm} \setminus S^p(\wm)$, then 
\begin{equation} \label{proof:corootequal:orthogonal}
\alpha^{\vee}|_{\Z\wm} = \beta^{\vee}|_{\Z\wm} \Rightarrow \alpha \perp \beta.
\end{equation}
Indeed, if $\alpha$ and $\beta$ are not orthogonal, then they belong to the same connected component of the Dynkin diagram of $S_{\wm}$, so they appear in the same primitive admissible triple. But, among the primitive admissible triples given in Definition~\ref{def:admissible}, only in case \ref{item:admiss:AnAk}.\ does the set $S_{\wm} \setminus S^p(\wm)$ contain two non-orthogonal simple roots, and for any such couple the two corresponding simple coroots take different values on some element of $\Sigma^N(\wm) \cap \Z S_{\wm}$.   

Finally we prove claim (\ref{proof:saturated:injective}). We first assume that $\rho$ is injective. Let $\alpha,\beta \in S_{\wm} \setminus S^p(\wm)$ with $\alpha^{\vee}|_{\Z\wm} = \beta^{\vee}|_{\Z\wm}$ and $\alpha\neq\beta$. Then, by (\ref{proof:saturated:basis}) and the combinatorial definition of the colors of $\s(\wm, \Sigma)$, the simple roots $\alpha$ and $\beta$ move the same color of $\s(\wm, \Sigma)$, so they are orthogonal and $\alpha + \beta \in \Sigma^N(\wm)$. In particular $\alpha+\beta\in\Z\wm$. This proves part (\ref{cond:sumisNsphericalroot}) of Theorem~\ref{thm:main}. 

For the reverse implication, we assume part (\ref{cond:sumisNsphericalroot}) of Theorem~\ref{thm:main}. Let $D,E \in \D_{(\wm,\Sigma)}$ be such that $\rho(D) = \rho(E)$, and let $\alpha$ and $\beta$ be simple roots in $S_{\wm} \setminus S^p(\wm)$ moving resp.\ $D$ and $E$. Then (\ref{proof:saturated:basis}) says that $\alpha^{\vee}|_{\Z\wm} = \beta^{\vee}|_{\Z\wm}$. It follows from (\ref{cond:sumisNsphericalroot}) of Theorem~\ref{thm:main} that $\alpha+\beta \in \Z\wm$. Since $\alpha \perp \beta$ by \eqref{proof:corootequal:orthogonal}, it follows from Proposition~\ref{prop:adapnsphroots} that $\alpha+\beta \in\Sigma^N(\wm)$, and from the definition of the colors of $\s(\wm, \Sigma)$ that $D=E$. This completes the proof. 
\end{proof}

\begin{example}\label{ex:need_tuple}
This example shows that condition (\ref{cond:sumisNsphericalroot}) in Theorem~\ref{thm:main} cannot be removed --- and therefore also that it is not possible to replace, in Theorem~\ref{thm:general}, the $|\D(\wm,\Sigma)|$-tuple $(\rho(D))_{D \in \D(\wm,\Sigma)}$ with the {\em set} $\rho(\D(\wm,\Sigma))$. Let $G=\SL(3)\times\SL(3)$ and $\wm = \N\{\omega_1+\omega_1'\}$. Then $S^p(\wm) = \{\alpha_2, \alpha_2'\}$. One checks that $\Sigma^N(\wm) = \emptyset$ and that $S_{\wm} = S$. It follows that the triple $(S_\wm, S^{p}(\wm), \Sigma^N(\wm))$ is admissible. Moreover, condition~(\ref{cond:partofbasis}) of Theorem~\ref{thm:main} is also met. Since $\Sigma^N(\wm) = \emptyset$ there is, up to isomorphism, only one affine spherical $G$-variety with weight monoid $\wm$. It is $X_0:= \overline{G\cdot x_0} \inn V(\omega_2+\omega_2')$, where $x_0$ is a highest weight vector in $V(\omega_2+\omega_2')$. As is well-known, $X_0$ is not smooth. This shows that Theorem~\ref{thm:main} would be false without
condition~(\ref{cond:sumisNsphericalroot}).  
\end{example}

\section{Smooth affine model varieties} \label{sec:model}

In this section we apply our smoothness criterion to show Theorem~\ref{thm:model}. Here we focus on the monoid $\Gamma = \Lambda^+$, which is $G$-saturated. For this reason we apply our criterion in the version of Theorem~\ref{thm:main}.

\begin{proposition}\label{prop:model}
Suppose that $G$ is semisimple and simply connected. Then:
\begin{enumerate}
\item\label{prop:model:Sp} $S^p(\Lambda^+)=\emptyset$,
\item\label{prop:model:Sigma} $\Sigma^N(\Lambda^+) = \{ \alpha+\alpha' \;|\; \alpha,\alpha'\in S,\; \alpha\neq\alpha',\;\alpha \not\perp\alpha'\}.$
\end{enumerate}
\end{proposition}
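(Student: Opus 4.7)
The plan is to derive both parts from the definitions and from Proposition~\ref{prop:adapnsphroots}, applied to $\wm = \Lambda^+$. This monoid is $G$-saturated since $G$ is semisimple and simply connected, and $\Z\Lambda^+ = \Lambda$. Part~(1) is immediate: simple connectedness gives $\omega_\alpha \in \Lambda^+$ for every $\alpha \in S$, and $\<\omega_\beta, \beta^\vee\> = 1 \neq 0$ shows that no $\beta \in S$ lies in $S^p(\Lambda^+)$.

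For part~(2), Proposition~\ref{prop:adapnsphroots} reduces the question to determining which entries $\bar\sigma$ of Table~\ref{table:scspher} satisfy conditions (i)--(v) when $\wm = \Lambda^+$. Condition (ii) is automatic, since the root lattice is contained in $\Lambda$. The fundamental weight $\omega_\alpha$ directly witnesses failure of (iv) when $\bar\sigma = 2\alpha$ (since $\<\alpha^\vee, \omega_\alpha\> = 1$ is odd) and of (v) when $\bar\sigma = \alpha + \beta$ with $\alpha \perp \beta$ distinct simple (it pairs to $1$ with $\alpha^\vee$ but $0$ with $\beta^\vee$). This excludes all simple roots, their doubles, and all orthogonal sums $\alpha + \beta$ from further consideration.

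The main step is to verify the compatibility condition (iii) for each remaining entry of Table~\ref{table:scspher} under the constraint $S^p(\Lambda^+) = \emptyset$. The observation driving the case analysis is that (iii) always demands a certain explicit subset of $\supp(\bar\sigma)$ to lie in $S^p = \emptyset$: in the ``other'' clause this subset is $\{\gamma \in \supp(\bar\sigma) : \<\gamma^\vee, \bar\sigma\> = 0\}$, while the $\sB_n$- and $\sC_n$-specific clauses of Proposition~\ref{prop:adapnsphroots}(iii) name the subsets $\{\alpha_2, \ldots, \alpha_{n-1}\}$ and $\{\alpha_3, \ldots, \alpha_n\}$ respectively. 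A case-by-case inspection of each higher-rank entry --- the $\sA_n$-entry $\alpha_1+\cdots+\alpha_n$ with $n \ge 3$ (middle roots pair trivially with $\bar\sigma$), the second $\sB_n$-entry $2(\alpha_1 + \cdots + \alpha_n)$, and the $\sA_3$-, $\sB_3$-, $\sC_n$-, $\sD_n$-, $\sF_4$-, and $\sG_2$-entry $4\alpha_1 + 2\alpha_2$ --- exhibits in each case a nonempty subset that would have to lie in $\emptyset$, an obstruction. The only survivors are the entries $\alpha_1 + \alpha_2$ with support of type $\sA_2$, $\sB_2 = \sC_2$, or $\sG_2$.

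Conversely, any sum $\alpha + \alpha'$ of two distinct non-orthogonal simple roots of $G$ has support of precisely one of these three types, so it appears in Table~\ref{table:scspher} and lies in $\Sigma^{sc}(G)$. A direct computation in each of the three rank-$2$ Cartan matrices shows that both simple roots pair nontrivially with $\bar\sigma$, so (iii) is satisfied; conditions (iv) and (v) do not apply; hence $\bar\sigma \in \Sigma^N(\Lambda^+)$, proving the claimed equality. The main obstacle is only the bookkeeping of the case analysis for (iii), which requires care with the long/short root conventions in types $\sB_n$, $\sC_n$, $\sF_4$, and $\sG_2$.
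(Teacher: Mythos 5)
Your approach is essentially the same as the paper's: both proofs apply Proposition~\ref{prop:adapnsphroots} to the $G$-saturated monoid $\Lambda^+$, use fundamental weights to rule out doubled simple roots (condition (iv)) and orthogonal sums (condition (v)), and use compatibility with $S^p(\Lambda^+)=\emptyset$ to discard all higher-rank entries of Table~\ref{table:scspher}; the paper merely states that reduction without the case-by-case detail you supply. Two inaccuracies in your write-up deserve correction, though neither changes the conclusion. First, your explicit exclusion list omits the first $\sB_n$ entry $\alpha_1+\cdots+\alpha_n$ with $n\ge 3$; the clause you quote, requiring $\{\alpha_2,\ldots,\alpha_{n-1}\}\subset S^p(\Lambda^+)$, disposes of it exactly as in your $\sA_n$ case. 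Second, and more substantively, your justification for the surviving $\sB_2$ entry is false as stated: with $\alpha_2$ the short root one has $\langle\alpha_2^\vee,\alpha_1+\alpha_2\rangle=-2+2=0$, so it is not true that both simple roots pair nontrivially with $\osig$. Under the generic clause of condition (iii) this vanishing would force $\alpha_2\in S^p(\Lambda^+)=\emptyset$ and kill the root; what actually saves it is the $\sB_n$-specific clause, which for $n=2$ only demands $\{\alpha_2,\ldots,\alpha_{n-1}\}=\emptyset\subset S^p(\Lambda^+)$ and $\alpha_n\notin S^p(\Lambda^+)$, both automatic. Finally, note that simple roots themselves are excluded by condition (i) of Proposition~\ref{prop:adapnsphroots}, not by the fundamental-weight argument, as the paper does explicitly. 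With these adjustments your proof is complete and coincides with the paper's.
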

\begin{proof}
Part (\ref{prop:model:Sp}) is obvious. Part (\ref{prop:model:Sigma}) follows from \cite{luna-model}, let us give a direct proof here. We observe that the only elements of $\Sigma^{sc}(G)$ compatible with $S^p(\Lambda^+)=\emptyset$ are either sums $\alpha+\alpha'$ of two different simple roots, or simple roots, or doubles of simple roots. Simple roots are excluded from $\Sigma^N(\dw)$ by Proposition~\ref{prop:adapnsphroots}. The double of any simple root $\alpha$ is excluded by the same proposition, because if $\omega_{\alpha}$ is the fundamental dominant weight corresponding to $\alpha$ then $\omega_{\alpha} \in \dw$ (since $G$ is simply connected) and
$\langle\alpha^\vee,\omega_\alpha\rangle =1\notin2\ZZ$. 

Finally, let $\sigma = \alpha + \beta$ where $\alpha, \beta \in S$ with $\alpha \neq \beta$. If $\alpha$ is orthogonal to $\beta$, then $\sigma \not\in \Sigma^N(\dw)$ by Proposition~\ref{prop:adapnsphroots} because $\alpha^{\vee} \neq  \beta^{\vee}$. On the other hand, if $\alpha$ is not orthogonal to $\beta$, then $\sigma \in \Sigma^N(\dw)$, again by  Proposition~\ref{prop:adapnsphroots}.
\end{proof}

Thanks to part (\ref{prop:model:Sp}) of the proposition above, we discuss now the admissibility of $(S_{\Lambda^+},\emptyset, \Sigma^N(\Lambda^+) \cap \Z S_{\Lambda^+})$ in view of applying Theorem~\ref{thm:main}.

Part (\ref{prop:model:Sigma}) of the proposition implies that no spherical root in $\Sigma^N(\Lambda^+)$ is the sum of simple roots in different connected components of the Dynkin diagram of $G$.

It follows that the triple $(S_{\Lambda^+},\emptyset, \Sigma^N(\Lambda^+) \cap \Z S_{\Lambda^+})$ is admissible if and only if $(S'\cap S_{\Lambda^+}, \emptyset, \Sigma^N(\Lambda^+) \cap \Z(S'\cap S_{\Lambda^+}))$ is admissible for all $S'\subseteq S$ corresponding to a connected component of the Dynkin diagram of $G$.

\begin{lemma}
Under the assumptions of Proposition~\ref{prop:model}, let $S'\subseteq S$ correspond to a connected component of the Dynkin diagram of $G$.
\begin{enumerate}
\item If $S'$ is of type $\mathsf A_n$ with $n\geq 1$ odd, then $S'\cap S_{\Lambda^+}=\emptyset$.
\item If $S'$ is of type $\mathsf A_n$ with $n\geq 2$ even, then $S'\cap S_{\Lambda^+}=\{\alpha_i\;|\; i \text{ odd}\}$.
\item If $S'$ is of type $\mathsf B_n$ with $n\geq 3$, then $S'\cap S_{\Lambda^+}=S'$.
\item If $S'$ is of type $\mathsf C_n$ with $n\geq 2$, then $S'\cap S_{\Lambda^+}=\{\alpha_i\;|\; i \text{ odd}\}$.
\item If $S'$ is of type $\mathsf D_n$ with $n\geq 4$, then $S'\cap S_{\Lambda^+}=S'$. 
\item If $S'$ is of type $\mathsf E_6$, then $S'\cap S_{\Lambda^+}=\{\alpha_3,\alpha_4,\alpha_5\}$.
\item If $S'$ is of type $\mathsf E_7$, then $S'\cap S_{\Lambda^+}=S'$.
\item If $S'$ is of type $\mathsf E_8$, then $S'\cap S_{\Lambda^+}=S'$.
\item If $S'$ is of type $\mathsf F_4$, then $S'\cap S_{\Lambda^+}=\{\alpha_1,\alpha_2,\alpha_3\}$.
\item If $S'$ is of type $\mathsf G_2$, then $S'\cap S_{\Lambda^+}=S'$.
\end{enumerate}
\end{lemma}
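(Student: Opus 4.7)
The plan is to verify the lemma by a case-by-case analysis on the Dynkin type of $S'$, using the definition of $S_{\Lambda^+}$ from Proposition~\ref{prop:localizroots} together with the explicit description of $\Sigma^N(\Lambda^+)$ given in Proposition~\ref{prop:model}(\ref{prop:model:Sigma}): every element of $\Sigma^N(\Lambda^+)$ is a sum $\alpha+\alpha'$ of two non-orthogonal distinct simple roots belonging to the same connected component of the Dynkin diagram of $G$. Since any $\nu \in \sum_{\alpha \in S'} \Q\, \alpha^\vee$ pairs to zero with those elements of $\Sigma^N(\Lambda^+)$ whose support lies outside $S'$, the determination of $S'\cap S_{\Lambda^+}$ reduces to a problem internal to the irreducible root subsystem with simple roots $S'$.

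Writing $\nu = \sum_{\alpha\in F} c_\alpha\, \alpha^\vee$ with $c_\alpha > 0$ for a candidate $F\subseteq S'$, the membership $\nu \in \V(\Lambda^+)$ translates into a system of linear inequalities in $(c_\alpha)$, one per edge of the Dynkin diagram of $S'$. For a chain edge between two simply laced simple roots $\alpha_i,\alpha_{i+1}$ this inequality reads
\[
-c_{i-1} + c_i + c_{i+1} - c_{i+2} \leq 0
\]
(with the convention $c_\alpha = 0$ for $\alpha\notin F$); at the special edge of $B_n$, $C_n$ or $F_4$, and at the fork of $D_n$ or $E_n$, the coefficients are modified by the relevant Cartan integers. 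For each type I then (a)~exhibit positive values $(c_\alpha)_{\alpha\in F^\ast}$ satisfying all inequalities, where $F^\ast$ is the claimed value of $S'\cap S_{\Lambda^+}$ --- typically taking all $c_\alpha$ equal, or rescaled so as to saturate the constraint at the special edge --- and (b)~show that no $F\supsetneq F^\ast$ admits a positive solution, by combining suitable edge inequalities so as to force $c_{\alpha^\ast}\leq 0$ for any putative $\alpha^\ast\in S'\smallsetminus F^\ast$. The union-closure property used in the proof of Proposition~\ref{prop:localizroots} upgrades these two verifications to the asserted equality $S'\cap S_{\Lambda^+} = F^\ast$.

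For the classical types the key structural observation is that the asymmetry of the Cartan pairing at the long/short transition of $B_n$ and $C_n$ (namely $\<\alpha_{n-1}^\vee,\alpha_n\>\neq\<\alpha_n^\vee,\alpha_{n-1}\>$) changes which partial sums of simple coroots lie on the boundary of $\V(\Lambda^+)$, accounting for the different answers claimed in items~(3) and~(4). For $D_n$ and $E_n$ one exploits the involution of the Dynkin diagram swapping the arms of the fork, together with an explicit choice of $(c_\alpha)$ proportional to the restriction of a fundamental coweight; for $F_4$ and $G_2$ the Cartan matrices are small enough to analyze directly by inspection. The main obstacle is the combinatorial coupling at the fork vertex of $D_n$, $E_6$, $E_7$ and $E_8$, where three or four edge inequalities meet and must be balanced simultaneously: the upper-bound argument of step (b) becomes delicate there, and is carried out by adding carefully chosen pairs of inequalities across the fork in order to isolate the coefficient of each excluded simple coroot and show that it is forced to be non-positive.
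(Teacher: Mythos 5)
Your reduction is essentially the paper's: its entire proof of this lemma is the one-line assertion that it is elementary to check, one connected component at a time, that the indicated sets are maximal among those admitting a strictly positive linear combination of the corresponding simple coroots with the required sign on $\Sigma^N(\Lambda^+)$. Your proposal sets up exactly this finite feasibility-plus-maximality problem, with one inequality per adjacent pair $\alpha+\alpha'$, and correctly observes that the components decouple. However, the proposal defers every actual verification (``exhibit positive values\ldots'', ``carefully chosen pairs of inequalities''), and these case-by-case checks are the whole content of the lemma.

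The genuine gap is that, with the inequalities you write down, namely $\langle\nu,\sigma\rangle\le 0$ for all $\sigma\in\Sigma^N(\Lambda^+)$ (membership in $\V(\Lambda^+)$ as in \eqref{eq:valcone} and Proposition~\ref{prop:localizroots}), the asserted verifications do not all succeed for the sets listed in the statement, so your steps (a) and (b) cannot both be carried out as claimed. Concretely: in type $\sA_2$ the claimed set $\{\alpha_1\}$ is infeasible, since $\langle c_1\alpha_1^\vee,\alpha_1+\alpha_2\rangle=c_1>0$, so step (a) fails; in type $\sA_1$ there is no inequality at all, so $\{\alpha_1\}$ is feasible and the claimed answer $\emptyset$ is not maximal, so step (b) fails; in type $\sB_4$ adding the inequalities attached to $\alpha_1+\alpha_2$ and $\alpha_3+\alpha_4$ forces $c_1\le 0$, so the claimed answer $S'$ is infeasible. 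The paper's own proof instead requires the combination to be \emph{non-negative} on $\Sigma^N(\Lambda^+)$, the opposite sign to \eqref{eq:valcone}, and under that reading maximality fails throughout type $\sA$, since all of $S'$ (equal coefficients) is then feasible. So there is a sign/normalization discrepancy between the statement, the definition of $S_{\Lambda^+}$ and the one-line proof, which your plan neither detects nor resolves; with the convention you adopt (the one of the definition), the type $\sA$ entries come out with the parities of items (1) and (2) interchanged (odd $n$ yields the odd-indexed simple roots, even $n$ yields $\emptyset$). A complete argument must fix the convention, actually perform the finitely many eliminations, and reconcile the resulting table with the way it is used in the subsequent corollary (where, fortunately, the admissibility conclusions are insensitive to the type-$\sA$ parity).
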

\begin{proof}
It is elementary to check that the indicated sets are maximal such that there exists a linear combination of the corresponding simple coroots, with strictly positive coefficients, that is non-negative on $\Sigma^N(\Lambda^+)$. 
\end{proof}

\begin{corollary}
The triple $(S'\cap S_{\Lambda^+}, \emptyset, \Sigma^N(\Lambda^+) \cap \Z(S'\cap S_{\Lambda^+}))$ is admissible if and only if $S'$ has type $\mathsf A_n$ or $\mathsf C_n$.
\end{corollary}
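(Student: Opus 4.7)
The plan is a case-by-case verification using the preceding lemma together with Proposition~\ref{prop:model}, after first pinning down which primitive admissible triples can possibly contribute when $S^p = \emptyset$.

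The main preliminary observation I would make is the following: in Definition~\ref{def:admissible}, a decomposition of $(S_1, \emptyset, \Sigma^N)$ uses primitive triples $(S_i,S^p_i,\Sigma_i)$ whose $S^p_i$-components union to $\emptyset$, so each $S^p_i$ must itself be empty. Inspecting List~\ref{list:pat}, the only primitive triples with $S^p_i = \emptyset$ are the degenerate cases of items 1, 2, and 4, namely:
\begin{itemize}
\item[-] $(\emptyset, \emptyset, \emptyset)$;
\item[-] $(\sA_1, \emptyset, \emptyset)$ (item 2 with $n=1$);
\item[-] $(\sA_{k+1} \times \sA_k, \emptyset, \{\alpha_1+\alpha_1',\ldots,\alpha_k+\alpha_k'\})$ for $k \ge 2$ (item~\ref{item:admiss:AnAk} with $n = k+1$).
\end{itemize}
Hence a triple $(S'\cap S_{\Lambda^+},\emptyset,\Sigma^N(\Lambda^+)\cap \ZZ(S'\cap S_{\Lambda^+}))$ is admissible if and only if its Dynkin diagram is a disjoint union of pieces of types $\sA_1$ and $\sA_{k+1}\sqcup \sA_k$ (with $k \ge 2$), with $\Sigma^N$-component matching accordingly.

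Next, I would run through the ten cases in the lemma, using Proposition~\ref{prop:model}(\ref{prop:model:Sigma}) to identify $\Sigma^N(\Lambda^+) \cap \ZZ(S'\cap S_{\Lambda^+})$ as the set of sums $\alpha+\beta$ of two non-orthogonal simple roots both lying in $S'\cap S_{\Lambda^+}$. For $S'$ of type $\sA_n$ (either parity) and of type $\sC_n$, the set $S'\cap S_{\Lambda^+}$ consists of pairwise non-adjacent simple roots (indices of the same parity in a single $\sA$-chain), so the induced Dynkin subdiagram is a disjoint union of copies of $\sA_1$ and the $\Sigma^N$-component is empty; this is admissible as a union of $(\sA_1,\emptyset,\emptyset)$-pieces. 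For the remaining types $\sB_n$ ($n\ge 3$), $\sD_n$, $\sE_7$, $\sE_8$, $\sG_2$, the set $S' \cap S_{\Lambda^+}$ equals all of $S'$, which is a connected irreducible diagram not of type $\sA$ and hence not expressible as any of the allowed disjoint pieces. Finally for $\sE_6$ the subset $\{\alpha_3,\alpha_4,\alpha_5\}$ forms an $\sA_3$ subdiagram, and for $\sF_4$ the subset $\{\alpha_1,\alpha_2,\alpha_3\}$ forms a connected $\sB_3$ subdiagram; neither $\sA_3$ nor $\sB_3$ decomposes as any disjoint union of $\sA_1$'s or $\sA_{k+1}\sqcup \sA_k$'s (both are connected and have too few vertices), so in these cases the triple fails to be admissible.

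The only real subtlety---and the step that needs the most care---is the preliminary observation that forces every primitive constituent to satisfy $S^p_i = \emptyset$, since it requires checking directly against List~\ref{list:pat} that the three degenerate triples listed above are exhausting. Once that is fixed, the remaining argument is a straightforward bookkeeping of induced Dynkin subdiagrams and of which two-element sums $\alpha+\beta$ survive after intersecting with $\ZZ(S'\cap S_{\Lambda^+})$.
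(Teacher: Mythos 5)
Your proof is correct and takes essentially the same route as the paper: a direct case-by-case check against List~\ref{list:pat} via the lemma, with the type $\sA$/$\sC$ cases decomposing into $(\sA_1,\emptyset,\emptyset)$ pieces (or the empty triple) and all other cases excluded because $S'\cap S_{\Lambda^+}$ is connected and matches no entry of the list; your preliminary reduction to primitive triples with $S^p_i=\emptyset$ is just a slightly more systematic organization of the same verification. The only (harmless) slip is that $(\emptyset,\emptyset,\emptyset)$ is not a degenerate instance of item 1 of List~\ref{list:pat} (an irreducible root system is nonempty) but is admissible via $I=\emptyset$, cf.\ Remark~\ref{rem:emptytriple}.
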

\begin{proof}
For $S'$ of type $\sA_n$ with $n$ odd, the triple is $(\emptyset,\emptyset,\emptyset)$, which is admissible (cf.\ Remark~\ref{rem:emptytriple}).

Assume $S'$ has type $\sA_n$ with $n$ even or $\sC_n$ with arbitary $n$. Then the triple above is obtained as a ``union'', in the sense of Definition~\ref{def:admissible}, of triples of the form $(\sA_1, \emptyset, \emptyset)$, which is the second admissible triple in List~\ref{list:pat} for $n=1$.

In all other cases $S'\cap S_{\Lambda^+}$ is connected, so expressing the above triple as a union only results in a trivial decomposition. The triple itself does not appear in List~\ref{list:pat}, so these cases are excluded and the proof is complete.
\end{proof}

Since again $S^p(\Lambda^+)=\emptyset$, it remains to show that $\{\alpha^{\vee}|_{\Z\wm}\colon \alpha \in S_{\Lambda^+}\}$ is a subset of a basis of $\Z(\Lambda^+)^*$ if $G$ has simple factors of type $\sA_n$ or $\sC_n$. This is obvious, since $\{\alpha^\vee\;|\; \alpha\in S\}$ is the dual basis of the basis of $\Lambda^+$ consisting of the fundamental dominant weights of $G$.

\def\cprime{$'$} \def\cprime{$'$} \def\cprime{$'$} \def\cprime{$'$}
  \def\cprime{$'$}
\providecommand{\bysame}{\leavevmode\hbox to3em{\hrulefill}\thinspace}
\providecommand{\MR}{\relax\ifhmode\unskip\space\fi MR }
\providecommand{\MRhref}[2]{%
  \href{http://www.ams.org/mathscinet-getitem?mr=#1}{#2}
}
\providecommand{\href}[2]{#2}

\end{document}